\newtheorem{theorem}{Theorem}[section]
\newtheorem{lemma}[theorem]{Lemma}
\newtheorem{proposition}[theorem]{Proposition}
\newtheorem{cor}[theorem]{Corollary}
\newtheorem{definition}[theorem]{Definition}
\newtheorem{rmk}[theorem]{Remark}
\def\R{{\mathbb R}}
\def\mG{{\mathcal G}}
\def\la{\langle}
\def\ra{\rangle}
\def\uu{{\underline{u}}}
\def\uu{\"u}
\def\mF{{\mathcal F}}
\def\les{\lesssim}
\def\1{{\bf 1}}
\def\oo{{\"o}}
\def\eqnn{\begin{eqnarray*}}
\def\eeqnn{\end{eqnarray*}}
\def\eqn{\begin{eqnarray}}
\def\eeqn{\end{eqnarray}}
\newcommand{\nc}{\newcommand}
\nc{\be}{\begin{equation}}
\nc{\ee}{\end{equation}}
\nc{\ba}{\begin{eqnarray}}
\nc{\ea}{\end{eqnarray}}
\nc{\eps}{\epsilon}
\def\prf{\begin{proof}}
\def\endprf{\end{proof}}
\begin{document}

\title[Derivative NLS on the half line]{The derivative nonlinear Schr\"odinger equation on the half line}

\author[ Erdo\u{g}an, G{\uu}rel, Tzirakis]{M. B. Erdo\u{g}an, T. B. G{\uu}rel, and N. Tzirakis}
\thanks{The first author is partially supported by NSF grant  DMS-1501041. The second author is supported by a grant from the Fulbright Foundation, and thanks the University of Illinois for its hospitality. The third  author's work was supported by a grant from the Simons Foundation (\#355523 Nikolaos Tzirakis)}
 
\address{Department of Mathematics \\
University of Illinois \\
Urbana, IL 61801, U.S.A.}
\email{berdogan@illinois.edu}

\address{Department of Mathematics \\
University of Illinois \\
Urbana, IL 61801, U.S.A.}
\curraddr{Department of Mathematics, Bo\u gazi\c ci University, Bebek 34342, Istanbul, Turkey}
\email{bgurel@boun.edu.tr}

\address{Department of Mathematics \\
University of Illinois \\
Urbana, IL 61801, U.S.A.}
\email{tzirakis@illinois.edu}

\date{}

\begin{abstract}
We study the initial-boundary value problem for the derivative nonlinear Schr\"odinger (DNLS) equation. More precisely we study the wellposedness theory and the regularity properties of the DNLS equation on the half line. We prove almost sharp local wellposedness, nonlinear smoothing, and small data global wellposedness in the energy space.  One of the obstructions is that the crucial  gauge transformation we use replaces the boundary condition with a nonlocal one. We resolve this issue by running an additional  fixed  point argument.
 Our method also implies almost sharp  local and  small energy global wellposedness,  and an improved smoothing estimate for the quintic Schr\"odinger equation on the half line. In the last part of the paper we consider the DNLS equation on $\R$ and prove smoothing estimates by combining the restricted norm method with a normal form transformation.
\end{abstract}

\maketitle
\section{Introduction}

The main purpose of this paper is to study various aspects of the  derivative nonlinear Schr\"odinger (DNLS)  equation as an initial-boundary value problem posed on the half line. The Cauchy problem on $\R$ associated with this equation,  
\be \label{DNLS}\left\{\begin{array}{l}
iq_t+q_{xx}-i (|q|^2q)_x =0,\,\,\,\,x\in\R , t\in \R,\\
q(x,0)=G(x),  
\end{array}\right. 
\ee
describes a variety of physical phenomena and has been extensively studied in the last 20-30 years.  The model \eqref{DNLS} was derived in \cite{mom} and \cite{mj} as a model for the propagation of circularly polarized Alfv\'en waves in magnetized plasma with a constant magnetic field. It also arises in the study of wave propagation in optical fibers \cite{GP}. The equation appears in many other contexts and for more information the reader can consult \cite{ss}, \cite{cyl}, and the references therein. 

We concentrate on the initial-boundary value problem with a Dirichlet boundary condition  on the semi-infinite interval $(0,\infty)$:
\be \label{DNLSR+}\left\{\begin{array}{l}
iq_t+q_{xx}-i (|q|^2q)_x =0,\,\,\,\,x\in\R^+, t\in \R^+,\\
q(x,0)=G(x), \,\,\,\,q(0,t)=H(t).   
\end{array}\right.
\ee
Here  $G\in H^s(\R^+)$ and $H\in H^{\frac{2s+1}{4}}(\R^+)$, with the additional compatibility condition $G(0)=H(0)$ for $s>\frac12$. The compatibility condition is necessary since the solutions we are interested in are    continuous space-time functions for $s>\frac12$.  
This problem bears  significant physical importance as described in  \cite{cyl}:
``Solutions of the DNLS equation under both the vanishing boundary conditions (VBC) and the nonvanishing boundary conditions (NVBC) are physically interesting topics. For problems of nonlinear Alfv\'en waves, weak nonlinear electromagnetic waves in magnetic and dielectric media, waves propagating strictly parallel to the ambient magnetic fields are modelled by the DNLS equation with VBC while those oblique waves are modelled by the DNLS equation  with NVBC. For problems in optical fibres, pulses under bright background waves should be modelled by NVBC."

 The  DNLS equation on $\R$ is known, \cite{ht1}, to be locally wellposed in $H^{s}$ for any $s\geq \frac12$. This result is sharp since for $s<\frac12$ the data to solution map fails to be uniformly continuous, see \cite{ht1} and \cite{bl} for the detailed argument. For earlier partial results on smoother spaces, see \cite{tsufu, tsufu1, ho, nh, ho1, to}. Global-in-time existence and uniqueness in the energy space $H^1$ was proved in \cite{to} assuming the smallness condition $\|G\|_{L^2}<\sqrt{2\pi} $. This result was improved in \cite{ht} to obtain global wellposedness below the energy space using the high low frequency decomposition method of Bourgain \cite{Bbook}. Global wellposedness for any $s>\frac12$ was obtained in \cite{ckstt, ckstt1} using the almost conservation law method also known as the $I$-method. The endpoint,  $s=\frac12$, global theory was established in \cite{miawu}. All the above results have the same smallness assumption which was needed because the energy functional is not positive definite. The smallness condition was later weakened to $\|G\|_{L^2}<\sqrt{4\pi} $ for energy data, see \cite{yw}  and \cite{yw1}. For the  analogous result with  initial data in $H^s$, $s\geq \frac12$, see \cite{guo}. The optimality of the constant $\sqrt{4\pi}$ in the smallness condition is an open problem, in particular there are no known blowup solutions even for negative energy.  
 For global wellposedness in weighted Sobolev spaces with the smallness assumption replaced by   certain spectral assumptions one can  consult \cite{lps1,lps2}.  

The equation on the real line is completely integrable and it has infinitely many conservation laws.  It thus can be analyzed by the inverse scattering transform,  see e.g. \cite{kaup,lee,kondo,lps1,lps2}. For initial-boundary value problems a variant of the inverse scattering method has been developed in \cite{fokas} and applied to many dispersive equations.  In particular, for the smooth solutions of the DNLS equation on the half line see \cite{jl} and \cite{jl1}. Our work in this paper does not rely on the integrability structure.

We study  the DNLS equation on the half line by  an adaptation of the restricted norm method ($X^{s,b}$ method) to the initial-boundary value problems developed in \cite{ETNLS, et2}. The method is based on ideas that were applied earlier to dispersive equations on the half line and especially for the NLS and the KdV equations in low regularity spaces in the papers \cite{collianderkenig,holmer1,holmer,BSZ,bonaetal}. A well known problem in the theory of the DNLS equation is that the bilinear $X^{s,b}$ estimates fail for the equation \eqref{DNLS}. Nevertheless one can use a gauge transformation that replaces the problematic term $(|q|^2q)_{x}$  with a quintic term which contains no derivatives and a derivative term that has a better convolution structure on the Fourier side, see equation \eqref{first}. In this paper we follow the same approach and thus define the gauge transformation\footnote{We note that this gauge transformation  is slightly different than the one that is commonly used in the literature; this version is more suitable for the boundary value problem.} 
$$\mG_\alpha f(x)= f(x)\exp\Big( -i\alpha\int_x^\infty |f(y)|^2dy\Big), \,\,\,\,\,\,\alpha\in\R.$$  If $q$ solves \eqref{DNLSR+}, then $ u=\mG_\alpha q$ satisfies
\be \label{DNLS_Ga}\left\{\begin{array}{l}
iu_t+u_{xx}-i (2\alpha+1)u^2\overline{u}_x -i(2\alpha+2) |u|^2u_x+\frac{\alpha}2(2\alpha+1)|u|^4u=0,\,\,\,\,x  , t\in \R^+ ,\\
u(x,0) =g(x),\,\,\,\,\,\,u(0,t) = h(t), 
\end{array}\right.
\ee
where $g(x)= \mG_\alpha G(x),$ and
$$  h(t) = H(t) \exp\Big( -i\alpha\int_0^\infty |q(y,t)|^2dy\Big)=H(t) \exp\Big( -i\alpha\int_0^\infty |u(y,t)|^2dy\Big). $$ The second equality holds since the gauge  transformation is unitary. The  equation \eqref{DNLS_Ga} has a counterpart on $\R$, which  is identical without the boundary value $h$. 

We will first establish the local wellposedness of \eqref{DNLS_Ga} in the case $\alpha=-1$. The equation then becomes
\be \label{first}\left\{
\begin{array}{l}
iu_t+u_{xx}+iu^2\overline{u}_x +\frac12 |u|^4u=0,\,\,\,\,x  , t\in \R^+ ,\\
u(x,0) =g(x),\,\,\,\,\,\,u(0,t) = h(t).
\end{array} \right.
\ee

The second step will be to establish the theory for any other $\alpha$. This is trivial in the case of the real line case since the gauge transformation is a bi-Lipschitz map between Sobolev spaces, see \cite{ckstt} and Lemma~\ref{lem:contgauge} in the Appendix. However, because of the dependence of the boundary data on the $L^2$ norm of  the solution in the gauged equation, the local theory on the half line for any other $\alpha$ requires an additional fixed point argument that allows us to pick the boundary data for $\alpha=-1$ problem suitably, see Section~\ref{sec:genalpha}. 

Wellposedness of \eqref{DNLS_Ga} or \eqref{first} means local existence, uniqueness and continuity with respect to the initial data of distributional solutions. More precisely we have the following definition: 

\begin{definition}\label{def:lwp}
We say \eqref{DNLS_Ga} is locally wellposed in $H^s(\R^+)$, if \\
i) for any $g\in H^s(\R^+)$ and $h\in H^{\frac{2s+1}{4}}(\R^+)$, with the compatibility condition $g(0)=h(0)$, the equation has a distributional solution 
$$
u\in   C^0_tH^s_x([0,T]\times \R) \cap C^0_xH^{\frac{2s+1}{4}}_t(\R \times [0,T]),
$$
where $T=T(\|g\|_{H^s(\R^+)},\|h\|_{H^{\frac{2s+1}4(\R^+)}})$, \\
ii) if $g_n\to g$ in $H^s(\R^+)$ and $h_n\to h$ in $H^{\frac{2s+1}{4}}(\R^+)$, then $u_n\to u$ in the space above,\\
iii)   $u$ is the unique such solution.
\end{definition}

Our first theorem establishes almost sharp local wellposedness (up to the endpoint $s=\frac12$).

\begin{theorem} \label{thm:local} Fix  $s\in (\frac12, \frac52)$, $s\neq \frac32$.  Then for any fixed $\alpha\in\R$, \eqref{DNLS_Ga} is locally wellposed in $H^s(\R^+)$ in the sense of Definition~\ref{def:lwp}.
\end{theorem}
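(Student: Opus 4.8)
The plan is to prove the theorem in two stages, exactly as the structure of the introduction suggests. First I would establish local wellposedness for the distinguished gauge value $\alpha=-1$, i.e.\ for equation \eqref{first}, and only then transfer the result to an arbitrary $\alpha$ by means of the gauge transformation $\mathcal{G}_{\alpha+1}$ together with an auxiliary fixed point argument that copes with the nonlocal boundary condition produced by the gauge.

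For the $\alpha=-1$ problem I would follow the adaptation of the restricted norm ($X^{s,b}$) method to the half line developed in \cite{ETNLS,et2}. The first step is to recast \eqref{first} as an integral equation on the whole line: extend $g$ to $\tilde g\in H^s(\R)$, and write the solution as the sum of the free evolution of $\tilde g$, a boundary forcing term obtained by applying the linear boundary operator to $h$ corrected by the boundary traces of the remaining terms so as to enforce $u(0,t)=h(t)$, and the Duhamel terms associated with the cubic $iu^2\bar u_x$ and quintic $\tfrac12|u|^4u$ nonlinearities. One then works in the intersection space $C^0_tH^s_x\cap C^0_xH^{\frac{2s+1}{4}}_t$ on $[0,T]$, controlled by an $X^{s,b}$ norm together with the trace norm dictated by the boundary regularity $\frac{2s+1}{4}$. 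The heart of the argument is the collection of multilinear estimates. The gauged cubic term $u^2\bar u_x$ possesses the favourable convolution structure on the Fourier side---the derivative is placed so as to remove precisely the resonant interaction that defeats the naive bilinear $X^{s,b}$ estimate for the ungauged equation \eqref{DNLS}---while the quintic term carries no derivative, so that both are controllable in $X^{s,b}$ for $s>\tfrac12$. One must complement these by the boundary estimates for the forcing operator, and it is here that the range $s<\tfrac52$ and the exclusion of $s=\tfrac32$ enter: the latter is the borderline $\frac{2s+1}{4}=1$ at which $H^{\frac{2s+1}{4}}(\R^+)$ ceases to behave nicely under the trace and extension operations. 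A contraction on a ball in this space, with $T$ chosen small in terms of $\|g\|_{H^s}$ and $\|h\|_{H^{\frac{2s+1}{4}}}$, then yields the existence, uniqueness, and continuous dependence required by Definition~\ref{def:lwp}.

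The genuine obstacle lies in passing to general $\alpha$, carried out in Section~\ref{sec:genalpha}. On the whole line the gauge $\mathcal{G}_{\alpha+1}$ is bi-Lipschitz between the relevant Sobolev spaces (Lemma~\ref{lem:contgauge}), so it would transport the $\alpha=-1$ theory immediately; on the half line, however, setting $v=\mathcal{G}_{-\alpha-1}u$ turns \eqref{DNLS_Ga} into \eqref{first} with initial data $\mathcal{G}_{-\alpha-1}g\in H^s$ but with the \emph{nonlocal} boundary value
\[
v(0,t)=h(t)\,\exp\Big(i(\alpha+1)\int_0^\infty|v(y,t)|^2\,dy\Big),
\]
whose phase depends on the as-yet-unknown mass profile $t\mapsto\int_0^\infty|v(y,t)|^2\,dy$. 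To close this I would run an outer fixed point over that profile: given a candidate continuous profile on $[0,T]$, the boundary datum for the $\alpha=-1$ problem is determined, the inner theory of the previous stage produces a solution $v$, and computing its spatial mass returns a new profile. The inputs needed are that the mass functional $v\mapsto\int_0^\infty|v|^2$ is Lipschitz on the solution space and that the inner theory depends Lipschitz-continuously on its boundary data; feeding these into one another and shrinking $T$ if necessary makes the composed profile map a contraction. Its fixed point yields the solution of \eqref{first} with the correct nonlocal boundary data, and undoing the gauge via Lemma~\ref{lem:contgauge} transfers existence, uniqueness, and continuous dependence back to \eqref{DNLS_Ga}, completing the proof for every $\alpha\in\R$.
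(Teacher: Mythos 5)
Your two-stage architecture (first $\alpha=-1$ via the restricted norm method on the half line, then general $\alpha$ via the gauge plus an outer fixed point on the mass profile) is exactly the paper's, but two essential mechanisms are missing, and without them both stages have genuine gaps.

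First, the outer fixed point. You propose to close it using only that the mass functional is Lipschitz on the solution space and that the inner solution map is Lipschitz in its boundary data, plus smallness of $T$. This cannot work as stated: the phase $\gamma(t)=\int_0^\infty|v(y,t)|^2\,dy$ must be fed back as part of a boundary datum in $H^{\frac{2s+1}{4}}(\R^+)$, and since $\frac{2s+1}{4}>\frac12$ for $s>\frac12$, that norm is not controlled by the information your two inputs provide; membership of $v$ in $C^0_tH^s_x$ only gives continuity of $t\mapsto\|v(t)\|_{L^2_x}^2$, so the profile map is not even obviously well defined from your candidate space back into itself, let alone a contraction. The paper's key point (Lemma~\ref{dnlsgauge}) is a gain of time regularity: by the flux identity $\partial_t\|v(t)\|^2_{L^2(\R^+)}=2\Im\big(\overline{h(t)}\,v_x(0,t)\big)-\frac12|h(t)|^4$, the output profile is controlled in $H^1_t$ (respectively $H^{3/2}_t$ for $\frac32<s<\frac52$), strictly smoother than $H^{\frac{2s+1}{4}}_t$, provided one can estimate the trace $v_x(0,\cdot)$. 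That estimate is supplied by the derivative versions of the Kato smoothing, boundary-operator, and Duhamel-trace bounds (the second halves of Lemma~\ref{lem:kato}, Lemma~\ref{lem:wbcont}, and Proposition~\ref{prop:duhamelkato}), which the paper proves precisely for this purpose; the contraction is then run in the stronger norm. This regularity gain, not generic Lipschitz dependence, is what closes the argument, and it is absent from your sketch.

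Second, uniqueness. You assert that the contraction yields the uniqueness required by Definition~\ref{def:lwp}, but the fixed point argument only gives uniqueness of solutions of the particular integral equation \eqref{eq:duhamel} built from a particular extension $g_e$ of the initial data; Definition~\ref{def:lwp}(iii) demands uniqueness of distributional solutions in $C^0_tH^s_x\cap C^0_xH^{\frac{2s+1}4}_t$, and it is not a priori clear that other distributional solutions, or fixed points built from different extensions, coincide on $\R^+$. The paper proves uniqueness by an energy (Gronwall) argument for $s\geq 2$ and then descends to all $s\in(\frac12,2)$, $s\neq\frac32$, by approximating with $H^2$ solutions; the nonlinear smoothing estimate (Theorem~\ref{thm:smooth}) is exactly what guarantees that the smooth solutions' existence times depend only on the $H^s\times H^{\frac{2s+1}4}$ size of the rough data, so the approximation survives on a fixed time interval. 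For general $\alpha$ this in turn requires the phases $\gamma_n$ of the approximating problems to exist on a uniform interval, which again comes from Lemma~\ref{dnlsgauge}. Your proposal never invokes the smoothing theorem, so the uniqueness claim is unsupported at every regularity below $s=2$.
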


As mentioned above we start with the case $\alpha=-1$, c.f. equation \eqref{first}. In this particular case we also obtain a smoothing estimate:
 \begin{theorem} \label{thm:smooth} Fix $s\in (\frac12, \frac52)$, $s\neq \frac32$, and $a<\min(\frac52-s, \frac14,2s-1)$.  Then  for any $g\in H^s(\R^+)$ and $h\in H^{\frac{2s+1}{4}}(\R^+)$, with the compatibility condition $g(0)=h(0)$,   the solution $u$ of \eqref{first}  satisfies
 $$
 u(x,t)-W_0^t(g,h)(x)\in C^0_tH^{s+a}_x([0,T]\times \R^+),
 $$
 where $T$ is the local existence time, and  $ W_0^t(g,h)$ is the solution of the corresponding linear equation.
\end{theorem}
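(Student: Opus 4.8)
The plan is to prove the smoothing estimate for equation \eqref{first} by means of the restricted norm ($X^{s,b}$) method adapted to the half line, following the Duhamel-type solution formula that underlies the local wellposedness Theorem~\ref{thm:local}. The solution is written as $u = W_0^t(g,h) + (\text{Duhamel term involving the nonlinearity})$, where the nonlinearity consists of the cubic derivative term $u^2\overline{u}_x$ and the quintic term $|u|^4 u$. Since the linear evolution $W_0^t(g,h)$ is subtracted off, what must be estimated is precisely the Duhamel contribution from the nonlinearity, and the goal is to show it lives in $C^0_t H^{s+a}_x$ rather than merely $C^0_t H^s_x$. First I would recall (or reprove) from the local theory the bilinear and multilinear $X^{s,b}$ bounds for both nonlinear terms; the smoothing claim amounts to upgrading these bounds to gain $a$ derivatives on the Duhamel output.

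\medskip

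The core of the argument is a gain-of-derivatives multilinear estimate on the Fourier side. For the cubic term $u^2\overline{u}_x$ one examines the convolution in frequencies $\xi = \xi_1 + \xi_2 - \xi_3$ together with the modulation variables, and exploits the resonance relation for the Schr\"odinger symbol, namely $\tau - \xi^2$ paired against the three interior frequencies $\xi_1^2, \xi_2^2, -\xi_3^2$. The key observation is that the quantity $\xi^2 - \xi_1^2 - \xi_2^2 + \xi_3^2$ factors into a product that is large precisely in the output-high regime, so that at least one modulation is comparable to the largest interior frequency squared; transferring a power of this modulation onto the high output frequency is exactly what produces the smoothing gain. The three constraints on $a$ in the hypothesis, $a < \tfrac52 - s$, $a < \tfrac14$, and $a < 2s-1$, should each arise from a distinct source: $\tfrac52 - s$ from the top of the allowed regularity window (the boundary forcing operator and its mapping properties degrade near $s = \tfrac52$), $\tfrac14$ from the sharp gain available in the bilinear interaction at the level of the $b$-exponent and the $L^4_{x,t}$-type Strichartz input, and $2s-1$ from the low-regularity threshold where the derivative in $\overline{u}_x$ can no longer be fully absorbed. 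The quintic term $|q|^4 q = |u|^4 u$ carries no derivative and is easier; its smoothing is governed by the $\tfrac52-s$ and $\tfrac14$ constraints alone.

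\medskip

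A genuinely half-line-specific complication, absent on $\R$, is that the smoothing estimate must be run inside the boundary-adapted solution operator, so one cannot simply cite the known whole-line smoothing result. I would handle the boundary contribution by using the extension/restriction machinery of the method in \cite{ETNLS, et2}: extend $g$ and $h$ suitably, estimate the nonlinear Duhamel term in the ambient $X^{s+a,b}$ space on $\R$, and then restrict, controlling the boundary operator $W_0^t$ in the time-trace norm $C^0_x H^{(2(s+a)+1)/4}_t$. This is where the restriction $a < \tfrac52 - s$ is genuinely needed, because the boundary-forcing term must remain in a range where its Kato-smoothing and time-trace estimates hold without loss. The main obstacle I anticipate is precisely the bookkeeping at the boundary: ensuring that the gained regularity $s+a$ is simultaneously compatible with both the spatial norm and the time-trace norm, and that the nonlocal dependence of $h$ on $\|u\|_{L^2}$ (inherited from the gauge transformation, and made rigorous by the additional fixed point in Section~\ref{sec:genalpha}) does not destroy the gain. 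The interior multilinear estimate, by contrast, is essentially the same computation as on the line and should go through once the correct modulation factorization is set up.
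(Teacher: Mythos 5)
Your proposal is correct and follows essentially the same route as the paper: decompose $u-W_0^t(g,h)$ via the fixed-point formula \eqref{eq:duhamel} into the whole-line Duhamel integral plus the boundary correction $W_0^t(0,q)$, prove derivative-gain multilinear $X^{s,b}$ estimates for $u^2\overline{u}_x$ and $|u|^4u$ using the resonance factorization $\xi_0^2-\xi_1^2+\xi_2^2-\xi_3^2=2(\xi_0-\xi_1)(\xi_0-\xi_3)$ together with Kato smoothing and maximal function inputs (Propositions~\ref{prop:smooth} and \ref{prop:smooth3}), and control the boundary correction through the time-trace estimate of the Duhamel term (Proposition~\ref{prop:duhamelkato} combined with the $X^{\frac12,\cdot}$-norm estimates of Propositions~\ref{prop:smooth2} and \ref{prop:smooth4}, which is exactly where the constraint $a<\frac52-s$ enters, as you anticipated). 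The only inessential slip is your worry about the nonlocal dependence of $h$ on $\|u\|_{L^2}$: that issue belongs to the general-$\alpha$ argument for Theorem~\ref{thm:local} in Section~\ref{sec:genalpha}, not to Theorem~\ref{thm:smooth}, which concerns \eqref{first} with prescribed boundary data $h$.
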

Notice that Theorem \ref{thm:smooth} explicitly states  that the nonlinear part of the solution is smoother than the initial data and the corresponding linear solution. This smoothing estimate plays a central role in the uniqueness part of Theorem \ref{thm:local} for $\alpha =-1$ and thus for any $\alpha$,  since it shows that if we approximate any solution with a smoother solution (uniqueness of which is known by energy methods) the time interval on which this approximation is valid depends only on the norm of the less regular solutions. This argument, see \cite{CT}, provides uniqueness for rough solutions at any regularity level that the solutions exist. For details see Section 4.

\begin{rmk}
The a priori estimates we need to obtain in order to prove Theorem \ref{thm:smooth}, in particular Proposition \ref{prop:smooth} and Proposition \ref{prop:smooth2}, also imply almost sharp local wellposedness and smoothing of order $a<\min(4s, \frac{1}{2}, \frac52-s)$ for any $s>0$ for the $L^2$-critical quintic NLS equation $iu_t+u_{xx}\pm |u|^4u=0$
on the half line, which as far as we know is a new result by itself. 
In addition, these estimates imply a new smoothing estimate for the $L^2$-critical quintic NLS equation on $\Bbb R$ of order $a<\min(4s, \frac{1}{2})$  improving earlier smoothing estimates obtained in \cite{kervar} and \cite{egt}.
\end{rmk}

We next establish the global wellposedness in the energy space for the equation \eqref{DNLS_Ga} for any $\alpha$, in particular for the equation \eqref{DNLSR+}. For boundary value problems with nonzero boundary data this is not an immediate consequence of the local theory since the local differentiation laws do not always lead to conservation laws. Nevertheless, in order to extend the solutions to all times we only need an a priori bound of the $H^1$ norm of the solution. On the half line this is indeed the case as we prove the bound for small initial and boundary data in the energy space. The proof is done in two steps. First we obtain the bounds in the case that $\alpha=-\frac12$ where the differentiation laws take the simplest form, and then we prove similar bounds for the DNLS equation \eqref{DNLS_Ga} for any $\alpha$ by substituting the gauge in the local energy identities:
\begin{theorem}\label{thm:global}
For any $\alpha\in\R$, there exists an absolute constant $c>0$ so that \eqref{DNLS_Ga} is globally wellposed in $H^1(\R^+)$ provided that $\|g\|_{H^1(\R^+)}+\|h\|_{H^1(\R^+)}\leq c$.
\end{theorem}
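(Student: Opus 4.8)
The plan is to reduce global wellposedness to a uniform-in-time a priori bound on $\|u(\cdot,t)\|_{H^1(\R^+)}$, and then to extract that bound from boundary-corrected conservation laws, first for the distinguished value $\alpha=-\frac12$ and then for general $\alpha$ via the gauge. For the reduction, note that by Theorem~\ref{thm:local} with $s=1$ the solution exists on $[0,T]$ with $T$ depending only on $\|g\|_{H^1(\R^+)}$ and $\|h\|_{H^{3/4}(\R^+)}$, and since $H^1\hookrightarrow H^{3/4}$ these are controlled by the hypothesis $\|g\|_{H^1}+\|h\|_{H^1}\le c$. The standard continuation argument then shows it suffices to prove a uniform bound $\sup_t\|u(\cdot,t)\|_{H^1(\R^+)}\le C$: one reapplies the local theory on successive intervals of a fixed length (bounded below once the $H^1$ norm is bounded above), the boundary data on each subinterval being small because $h$ is given globally in time with $\|h\|_{H^1(\R^+)}\le c$.

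For the a priori bound I would first treat $\alpha=-\frac12$, where \eqref{DNLS_Ga} collapses to $iu_t+u_{xx}-i|u|^2u_x=0$ because the coefficients $2\alpha+1$ and $\frac{\alpha}2(2\alpha+1)$ both vanish. Pairing this equation with $\bar u$ and integrating by parts on $(0,\infty)$ (discarding the vanishing contributions at $x=+\infty$) yields a mass differentiation law $\frac{d}{dt}M(t)=\mathcal B_M(t)$ with $M(t)=\int_0^\infty|u|^2\,dx$; pairing instead with $\bar u_t$ yields an energy differentiation law $\frac{d}{dt}E(t)=\mathcal B_E(t)$, where $E$ is the restriction to the half line of the gauged DNLS Hamiltonian. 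In both cases $\mathcal B_M,\mathcal B_E$ are boundary expressions at $x=0$ built from the Dirichlet trace $u(0,t)=h(t)$, the Neumann trace $u_x(0,t)$, and $h'(t)$.

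Controlling these boundary fluxes is the heart of the matter, and the step I expect to be the main obstacle: on the whole line the analogous quantities are exactly conserved, whereas on the half line the a priori bound requires estimating $\int_0^T(|\mathcal B_M|+|\mathcal B_E|)\,dt$ uniformly. The purely $h$-dependent contributions, such as $\|h\|_{L^4_t}^4$ and $\int_0^T h'\bar h\,dt$, are controlled directly by $\|h\|_{H^1(\R^+)}\le c$. The delicate term is the one involving the Neumann trace $u_x(0,t)$, which is not part of the data; here I would invoke the Kato smoothing and trace estimates underlying Theorems~\ref{thm:local} and \ref{thm:smooth} to bound $u_x(0,\cdot)$ in a time-Sobolev space by the interior $H^1$ norm, and then pair against $h$ by H\"older in $t$. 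Because the DNLS energy is not positive definite, I would also use that $M(t)$ stays small (it is conserved up to the small boundary flux) so that a Gagliardo--Nirenberg argument makes $E$ coercive, $\|u_x(t)\|_{L^2}^2\lesssim E(t)+(\text{small})$. Combining the two differentiation laws, the smallness of $c$, and coercivity, I would close a continuity argument: on the maximal interval where $\|u(t)\|_{H^1}\le 2c_0$ the estimates improve this to $\|u(t)\|_{H^1}\le\frac32 c_0$, forcing the bound to persist for all time.

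Finally, for general $\alpha$ I would transfer the bound through the gauge. Since $\mG_\alpha\circ\mG_\beta=\mG_{\alpha+\beta}$ and each $\mG_\gamma$ is bi-Lipschitz on $H^1(\R^+)$ with constants depending only on the (small) $L^2$ norm by Lemma~\ref{lem:contgauge}, the general-$\alpha$ solution $u$ is related to the $\alpha=-\frac12$ solution $v$ by $u=\mG_{\alpha+\frac12}v$, so the uniform $H^1$ bound for $v$ transfers to $u$. Equivalently, as indicated in the text, one substitutes the gauge into the local energy identities to obtain boundary-corrected differentiation laws directly for \eqref{DNLS_Ga}. The only extra care concerns the boundary data: the gauge makes $h$ depend on the solution's $L^2$ mass, but since the mass is controlled and small, the additional fixed point argument of Section~\ref{sec:genalpha} (already used for the local theory) keeps the boundary data small in the relevant norm on each time step.
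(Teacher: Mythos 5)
Your overall architecture (reduction to a uniform a priori $H^1$ bound, starting with $\alpha=-\frac12$, mass/energy differentiation laws with boundary fluxes, Gagliardo--Nirenberg coercivity under small mass, then passing to general $\alpha$ through the gauge) matches the paper, but there is a genuine gap at precisely the step you call the heart of the matter: the control of the Neumann trace $u_x(0,t)$. You propose to bound $u_x(0,\cdot)$ in a time-Sobolev space using the Kato smoothing/trace estimates underlying the local theory and then pair against $h$, $h'$ by H\"older in $t$. Those estimates are local in time: on each unit time interval they bound $\|u_x(0,\cdot)\|_{L^2_t(I_j)}$ by the interior norms on that interval, i.e.\ by a quantity of size $O(c_0)$ under your bootstrap hypothesis, and this per-interval bound is not summable in $j$. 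The accumulated flux up to time $t$ is then only controlled by
\[
\int_0^t|u_x(0,s)|\,\big(|h(s)|+|h'(s)|\big)\,ds\ \lesssim\ c_0\sum_j \|h\|_{H^1(I_j)}\ \lesssim\ c_0\,\sqrt{t}\,\|h\|_{H^1(\R^+)},
\]
which grows without bound (even exploiting that $\|h\|_{H^1(I_j)}\to 0$, the sum need not converge). So your continuity argument closes only up to a finite time of order $(c_0/c)^2$, not on $[0,\infty)$; local-in-time trace estimates cannot rule out secular growth of the energy.

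The missing idea is the paper's third exact differentiation identity \eqref{eq:l}, $\partial_x(|u_x|^2)=-i\big[(u\overline{u_x})_t-(u\overline{u_t})_x\big]$, a momentum-type (local smoothing) identity. Integrating it over $[0,t]\times[0,\infty)$ gives the \emph{exact global} formula
\[
I_t:=\int_0^t |u_x(0,s)|^2\,ds \;=\; i\int_0^\infty u\overline{u}_x\,dx \;-\; i\int_0^\infty g\,\overline{g'}\,dx \;+\; i\int_0^t h\,\overline{h'}\,ds,
\]
so the global-in-time $L^2_t$ norm of the Neumann trace is bounded by the instantaneous quantity $\|u(t)\|_{L^2}\|u_x(t)\|_{L^2}$ plus data, with no growth in $t$. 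Feeding this back into the mass and energy identities (whose fluxes are $\lesssim c\sqrt{I_t}$ by Cauchy--Schwarz) and using coercivity yields the closed algebraic system $\|u\|_{L^2}^2,\ \|u_x\|_{L^2}^2\le c(1+\sqrt{I_t})$, $I_t\le \|u\|_{L^2}\|u_x\|_{L^2}+c$, hence $I_t\le 2c+c\sqrt{I_t}$ and so $I_t\le 4c$, which is the uniform $H^1$ bound. Your general-$\alpha$ step inherits a related problem: transferring the bound through $\mG_{\alpha+\frac12}$ requires the $\alpha=-\frac12$ a priori bound with boundary data $e^{i(\alpha+\frac12)\gamma(t)}H(t)$, whose $H^1_t$ norm involves $\gamma'$ and hence the Neumann trace again, so its smallness is circular rather than given. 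The paper avoids this by substituting $\mG_{-\frac12}q$ directly into \eqref{eq:m}--\eqref{eq:l}, using $|(\mG_{-\frac12}q)(0,s)|=|H(s)|$ together with the pointwise bound $|(\mG_{-\frac12}q)_s(0,s)|\lesssim |H'(s)|+|H(s)|^5+|H(s)|^2|(\mG_{-\frac12}q)_x(0,s)|$, and closing the slightly weaker system $I_t\lesssim c(1+\sqrt{I_t}+I_t^{3/4})$ --- again by the exact identity, not by trace estimates from the local theory.
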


In the last part of the paper we consider the derivative Schr\"odinger equation on the full line \eqref{DNLS}. We combine the $ X^{s,b}$ theory with the theory of normal forms as was developed
in \cite{bit} and \cite{et1} for the periodic KdV equation to obtain the following smoothing theorem:

\begin{theorem}\label{thm:smoothR} Fix  $s>\frac12$ and $a<\min(\frac12,2s-1)$. 
For any $g\in H^s(\R)$ the solution $u $ of 
\be \label{DNLS-1}\left\{\begin{array}{l}
iu_t+u_{xx}+i  u^2\overline{u}_x +\frac{1}2 |u|^4u=0,\,\,\,\,x  , t\in \R,\\
u(x,0) =g(x) 
\end{array}\right.
\ee
satisfies
$$
u-e^{it\Delta}g\in C^0_tH^{s+a}_x([0,T]\times \R),
$$
where $T$ is the local existence time.
\end{theorem}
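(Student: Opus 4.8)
The plan is to run the restricted norm ($X^{s,b}$) method together with a normal form (differentiation by parts in time) in order to absorb the derivative carried by the cubic nonlinearity of \eqref{DNLS-1}. First I would record that \eqref{DNLS-1} is locally wellposed in $H^s(\R)$ for $s\geq\frac12$ (this is the gauged form of the classical result \cite{ht1}; see also Lemma~\ref{lem:contgauge}), fix a solution $u$ on $[0,T]$, and retain an a priori bound $\|u\|_{X^{s,b}}\lesssim\|g\|_{H^s}$ for some $b>\frac12$. Passing to the interaction representation $\widehat u(\xi,t)=e^{-it\xi^2}w(\xi,t)$, the difference $u-e^{it\Delta}g$ is the Duhamel integral of the cubic term $iu^2\overline u_x$ and the quintic term $\frac12|u|^4u$, and the goal is to place this integral in $X^{s+a,b}\hookrightarrow C^0_tH^{s+a}_x$, from which the $C^0_t$ conclusion follows.

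The quintic term carries no derivative, so I would dispose of it first by invoking the multilinear $X^{s,b}$ estimates discussed in the Remark following Theorem~\ref{thm:smooth}, which yield smoothing of order $a<\min(4s,\frac12)$ for the quintic interaction on $\R$. Since $2s-1\leq 4s$ for all $s>0$, the hypothesis $a<\min(\frac12,2s-1)$ is compatible with that range, and this term is controlled without further work.

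The heart of the matter is the cubic term, for which the naive trilinear estimate fails precisely because of the derivative. Writing the cubic Duhamel contribution in Fourier on the constraint $\xi=\xi_1+\xi_2-\xi_3$, the oscillatory phase is $e^{is\Omega}$ with resonance function $\Omega=\xi^2-\xi_1^2-\xi_2^2+\xi_3^2=2(\xi-\xi_1)(\xi-\xi_2)$, while the symbol carries the derivative factor $\xi_3$. I would split according to the size of $|\Omega|$. On the near-resonant region $|\Omega|\lesssim 1$ I do \emph{not} integrate by parts; instead I estimate the trilinear integral directly, using that the constraint $|\xi-\xi_1|\,|\xi-\xi_2|\lesssim 1$ confines $(\xi_1,\xi_2)$ to a thin set whose measure compensates part of the derivative loss. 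A short case analysis (the diagonal $\xi_1\approx\xi_2\approx\xi_3\approx\xi$ versus the two branches where one input frequency equals the output) shows this region closes exactly when $a<2s-1$; this is where that threshold originates. On the non-resonant region $|\Omega|\gtrsim 1$ I perform the normal form: integrating by parts in $s$ produces boundary terms carrying the smoothing symbol $\xi_3/\Omega=\xi_3/[2(\xi-\xi_1)(\xi-\xi_2)]$, together with new interior terms in which $\partial_s$ falls on one interaction-representation factor and is replaced, via the equation, by cubic and quintic expressions — yielding quintic and septic terms weighted by $1/\Omega$. The boundary and higher-order pieces are then estimated by standard $L^2$-based multilinear (bilinear Strichartz / convolution) bounds, following the scheme of \cite{bit} and \cite{et1}; this is where the gain saturates at $a<\frac12$, while the extra factors in the quintic and septic terms, combined with the $1/\Omega$ weight, leave ample room (if necessary one iterates the normal form once more on the quintic-from-cubic piece).

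I expect the main obstacle to be the cubic estimate, and within it the interplay between the two regions: making the near-resonant direct estimate sharp enough to reach $a<2s-1$ while simultaneously controlling the $1/\Omega$ singularity along the resonant set $\{\xi_1=\xi\}\cup\{\xi_2=\xi\}$ in the integration-by-parts region. The conjugate in $u^2\overline u_x$ complicates the convolution structure, since the frequency of $\overline u_x$ enters as $\xi_3$ with a reflected weight, and care is needed to keep the newly generated multilinear terms inside $X^{s+a,b}$ uniformly in time so that the $C^0_tH^{s+a}_x$ membership is genuine. Collecting the two thresholds produced by the resonant and non-resonant analyses gives the stated range $a<\min(\frac12,2s-1)$.
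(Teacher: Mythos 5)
Your proposal follows essentially the same route as the paper: a normal form (differentiation by parts in time) in the spirit of \cite{bit,et1} applied to the cubic term after splitting along the resonance function $2(\xi-\xi_1)(\xi-\xi_3)$, with the near-resonant piece estimated directly in $X^{s+a,-\frac12+}$ (the source of the $2s-1$ threshold), the quintic term disposed of by Proposition~\ref{prop:smooth}, and the terms generated when the time derivative falls on a factor controlled by multilinear $X^{s,b}$ bounds. The only differences are cosmetic: the paper defines the resonant region by one small factor ($|\xi-\xi_1|<1$ or $|\xi-\xi_3|<1$) rather than a small product $|\Omega|\lesssim 1$, and it avoids your fallback of expanding into quintic/septic pieces or iterating the normal form by keeping $NR_1,NR_2$ trilinear in $(u,u,w)$ and placing $w$ in the negative-modulation space $X^{s,-3/8}$ via Lemma~\ref{lem:b38}.
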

This result can be iterated to hold for all times, see \cite{et1}, under the small $L^2$ assumption that guarantees global existence. Since the gauge  transformation is continuous in Sobolev spaces, see Lemma~\ref{lem:contgauge}
in the Appendix,  
this theorem immediately implies that 
\begin{cor} Fix  $s>\frac12$ and $a<\min(\frac12,2s-1)$. 
For any $G\in H^s(\R)$ the solution   $q$ of \eqref{DNLS} satisfies
\be \label{Rsmooth}
q-\mG_1\big(e^{it\Delta}(\mG_{-1}G)\big)\in C^0_tH^{s+a}_x([0,T]\times \R). 
\ee
\end{cor}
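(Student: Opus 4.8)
The plan is to read \eqref{Rsmooth} off Theorem~\ref{thm:smoothR} by undoing the gauge. Set $g=\mG_{-1}G$; then $u=\mG_{-1}q$ solves \eqref{DNLS-1} with data $g$, and since the gauges compose as a one–parameter group, $\mG_\alpha\mG_\beta=\mG_{\alpha+\beta}$, we recover $q=\mG_1u$. Writing $v=e^{it\Delta}g=e^{it\Delta}(\mG_{-1}G)$, the reference function in \eqref{Rsmooth} is exactly $\mG_1v$, so the assertion is equivalent to $\mG_1u-\mG_1v\in C^0_tH^{s+a}_x$. Theorem~\ref{thm:smoothR} already supplies $w:=u-v\in C^0_tH^{s+a}_x$, while $u(t),v(t)\in H^s$ uniformly on $[0,T]$ by the local theory and by the unitarity of $e^{it\Delta}$ together with Lemma~\ref{lem:contgauge}. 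Thus the whole corollary reduces to showing that $\mG_1$ transfers the smoothing of the difference in the range $a<\min(\frac12,2s-1)$.

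Fixing $t$ and writing $A_f(x)=\int_x^\infty|f|^2\,dy$, the gauge difference splits as
\[
\mG_1u-\mG_1v=\underbrace{w\,e^{-iA_u}}_{\mathrm{I}}+\underbrace{v\big(e^{-iA_u}-e^{-iA_v}\big)}_{\mathrm{II}}.
\]
Term $\mathrm{I}$ is routine: since $s>\frac12$ the algebra property gives $|u|^2\in H^s$, so the unimodular factor $e^{-iA_u}$ acts as a bounded multiplier on $H^{s+a}$ for $a<\frac12$, and fractional Leibniz then places $w\,e^{-iA_u}$ in $H^{s+a}$ from $w\in H^{s+a}$. First I would dispatch this term and reduce everything to $\mathrm{II}$.

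The hard part will be term $\mathrm{II}$, where a rough factor $v\in H^s$ multiplies the phase difference. The plan is to linearize $e^{-iA_u}-e^{-iA_v}=-i(A_u-A_v)\int_0^1e^{-i(A_v+\theta(A_u-A_v))}\,d\theta$ and expand $A_u-A_v=-\int_x^\infty\big(2\,\mathrm{Re}(\bar v w)+|w|^2\big)\,dy$, then exploit two structural features. First, the nonlocal antiderivative in the gauge gains a derivative, upgrading the $H^s$ density $2\,\mathrm{Re}(\bar v w)+|w|^2$ into an $H^{s+1}$ phase correction; moreover conservation of $L^2$ mass under both the DNLS flow and the free flow, $\|u(t)\|_{L^2}=\|v(t)\|_{L^2}=\|G\|_{L^2}$, forces $A_u-A_v\to0$ at $\pm\infty$, so $e^{-iA_u}-e^{-iA_v}\in L^2$. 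Second, the residual bilinear object $v\int_x^\infty\bar v w$ must be handled by the product and fractional-Leibniz bounds underlying Lemma~\ref{lem:contgauge}; the multiplier step costs $a<\frac12$, and the remaining restriction $a<2s-1$ is inherited unchanged from Theorem~\ref{thm:smoothR}. Once $\mathrm{II}$ is placed in $H^{s+a}$ uniformly in $t$, the continuity in time follows from the same difference estimates, which is precisely the content packaged in the continuity statement of Lemma~\ref{lem:contgauge}; this establishes \eqref{Rsmooth}.
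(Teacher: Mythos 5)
Your reduction to Theorem~\ref{thm:smoothR} and the decomposition
\begin{equation*}
\mG_1u-\mG_1v \;=\; \underbrace{w\,e^{-iA_u}}_{\mathrm{I}}\;+\;\underbrace{v\,\big(e^{-iA_u}-e^{-iA_v}\big)}_{\mathrm{II}},
\qquad w=u-v,\qquad A_f(x)=\int_x^\infty|f(y)|^2\,dy,
\end{equation*}
are correct, as is your treatment of $\mathrm{I}$ (multiplication by $e^{-iA_u}$, whose derivative is $i|u|^2e^{-iA_u}$ with $|u|^2\in H^s$, preserves $H^\sigma$ for $\sigma\le s+1$), and you are right that mass conservation is indispensable: without $\|u(t)\|_{L^2}=\|v(t)\|_{L^2}$, term $\mathrm{II}$ contains the piece $(e^{-i\delta}-1)\,\mG_1 v$ with $\delta=\|u\|_{L^2}^2-\|v\|_{L^2}^2$, which lies only in $H^s$. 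All of this is already more explicit than the paper, whose entire proof of the corollary is the assertion that Lemma~\ref{lem:contgauge} makes it immediate. However, your proposal does not prove the corollary: term $\mathrm{II}$, which you correctly single out as the hard part, is never actually estimated, and the tools you propose for it cannot close it.

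Two concrete problems. First, the inference ``$A_u-A_v\to0$ at $\pm\infty$, so $e^{-iA_u}-e^{-iA_v}\in L^2$'' is a non sequitur: $A_u-A_v$ is the antiderivative of $-(2\Re(\overline{v}w)+|w|^2)\in L^1\cap H^s$, and zero total integral gives vanishing limits but no decay rate; for instance $\la x\ra^{-1/2}$ is the antiderivative of a smooth, zero-mean $L^1$ function yet lies outside $L^2$. So the promotion of the phase correction to $H^{s+1}$ is unjustified. Second, and fatally: even granting $\phi:=e^{-iA_u}-e^{-iA_v}\in H^{s+1}\cap L^\infty$, no product or fractional-Leibniz estimate can place $v\phi$ in $H^{s+a}$. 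The factor $v=e^{it\Delta}(\mG_{-1}G)$ is only in $H^s$, and multiplication by a non-constant bounded function, however smooth, never raises regularity above that of the rougher factor: every Kato--Ponce-type bound for $\|v\phi\|_{H^{s+a}}$ contains the unavailable term $\|v\|_{H^{s+a}}\|\phi\|_{L^\infty}$; equivalently, the paraproduct piece (low frequencies of $\phi$ against high frequencies of $v$) carries exactly the $H^s$ regularity of $v$, and indeed $v\phi\notin H^{s+a}$ whenever the $H^{s+a}$-singularity of $v$ sits where $\phi\neq0$. This is also why the paper's one-line appeal to Lemma~\ref{lem:contgauge} cannot be read literally at the level $s+a$: the difference estimate behind that lemma bounds $\|v(e^{-iA_u}-e^{-iA_v})\|_{H^\sigma}$ by $\|v\|_{H^\sigma}$ times a Lipschitz norm of the phase difference, so it applies with $\sigma=s$ but not with $\sigma=s+a$. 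Controlling $\mathrm{II}$ requires input beyond fixed-time product bounds --- for example, space-time ($X^{s,b}$-type) estimates on the trilinear expression $v\int_x^\infty\overline{v}\,w\,dy$ exploiting that $v$ is a free wave and $w$ a smoother Duhamel term --- and your proposal supplies no such mechanism.
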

This improves a smoothing result\footnote{Although it is not stated explicitly in \cite{ht}, the assertion \eqref{Rsmooth} follows from the a priori estimates obtained there for the smaller range $a<\min(\frac14,2s-1)$. } that was obtained in \cite{ht} for equation \eqref{DNLS}.

We now briefly discuss the organization of the paper.   In Section~\ref{sec:defin} we define the notion of a solution, discuss the solution of the linear equation,  and obtain an integral formulation on the full line that we need in order to run a fixed point argument, see equation \eqref{eq:duhamel}.  We thus are looking for a fixed point in the space 
\be\label{eq:space}
X^{s,b}(\R\times [0,T]) \cap C^0_tH^s_x([0,T]\times \R) \cap C^0_xH^{\frac{2s+1}{4}}_t(\R \times [0,T]).
\ee

It is a well known fact that  (see \eqref{def:xsb} below for the definition of the $X^{s,b}$ norm)
$X^{s,b}(\R\times [0,T]) \subset C^0_tH^s_x([0,T]\times \R)$
for any $b>\frac{1}{2}$. However, to close the fixed point argument we need to take  $b<\frac{1}{2}$ and prove the continuity of the solution directly by additional estimates. In Section~\ref{sec:apriori}  we prove the linear and nonlinear a priori estimates that  are useful in studying  wellposedness  of the derivative NLS on the half line.    In Section~\ref{sec:prt3}  we establish the local wellposedness theory for general $\alpha$, see Theorem~\ref{thm:local}.  
In Section~\ref{sec:global} we obtain the global wellposedness  with small mass and energy by proving a priori bounds on the energy norm.  Section~\ref {sec:DNLSR} is devoted to the derivative NLS equation on the real line.    In particular, we apply a normal form transformation and prove multilinear estimates that we use to obtain an improved smoothing bound for the equation.  Finally in the Appendix, we record a lemma that we use repeatedly throughout the paper and prove another lemma on the Lipschitz continuity of   the gauge transformation  in Sobolev spaces.

\subsection{Notation}
We define the Fourier transform on $\R$ by
$$
\widehat g(\xi)=\mathcal F g(\xi)=\int_{\R } e^{-ix \xi} g(x) dx.
$$ 
We also define the Sobolev space $H^s(\R)$ via the norm:
$$
\|g\|_{H^s}=\|g\|_{H^s(\R)}=\Big(\int_\R \la \xi\ra^{2s} |\widehat g(\xi)|^2 d\xi\Big)^{1/2},
$$
where $\la \xi\ra:=\sqrt{1+|\xi|^2}$. 
We denote the linear Schr\"odinger propagator (for $g\in L^2(\R)$) by
$$
W_\R g(x,t)=e^{it\Delta} g(x)= \mathcal F^{-1}\big[e^{-it|\cdot|^2} \widehat g(\cdot)\big](x).
$$
For a space time function $f$, we set the notation
$$
D_0f(t)=f(0,t).
$$
Finally, we  reserve  the symbol $\eta $ for a smooth compactly supported function of time which is equal to $1$ on $[-1,1]$. 

\section{Notion of a solution} \label{sec:defin}

Throughout the paper we have $s\in(0,\frac52) $, $s\neq\frac12, \frac32$.
We define $H^s(\R^+)$ norm as
$$
\|g\|_{H^s(\R^+)}:=\inf\big\{\|\tilde g\|_{H^s(\R)}: \tilde g(x)=g(x),\, x>0\big\}.
$$
We say $\tilde g$ is an $H^s(\R)$ extension of $g\in H^s(\R^+)$ if $ \tilde g(x)=g(x)$ for $x>0$ and $\|\tilde g\|_{H^s}\leq 2 \|g\|_{H^s(\R^+)}$.  
Note that, if $g  \in  H^s(\R^+)$ for some $s>\frac12$, then by Sobolev embedding  any $H^s$ extension  is continuous on $\R$, and hence $g(0)$ is well defined. 
We have the following lemma concerning extensions of $H^s(\R^+)$ functions, see \cite{collianderkenig} and  \cite{ETNLS}:
\begin{lemma}\label{lem:Hs0} Let $h\in H^s(\R^+)$ for some $-\frac12< s<\frac32$. \\
i) If $-\frac12< s<\frac12$, then $\|\chi_{(0,\infty)}h\|_{H^s(\R)}\les \|h\|_{H^s(\R^+)}$.\\
ii) If $\frac12<s<\frac32$  and $h(0)=0$, then $\|\chi_{(0,\infty)}h\|_{H^s(\R)}\les \|h\|_{H^s(\R^+)}$. 
\end{lemma}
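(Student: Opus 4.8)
The plan is to reduce both parts to the single multiplier statement that, for $-\frac12 < \sigma < \frac12$, multiplication by the Heaviside function $\chi_{(0,\infty)}$ is bounded on $H^\sigma(\R)$. Granting this, part (i) is immediate: choosing an extension $\tilde h \in H^s(\R)$ with $\tilde h = h$ on $(0,\infty)$ and $\|\tilde h\|_{H^s} \le 2\|h\|_{H^s(\R^+)}$, one has $\chi_{(0,\infty)} h = \chi_{(0,\infty)} \tilde h$, since the two functions agree on $(0,\infty)$ and both vanish on $(-\infty,0)$, whence $\|\chi_{(0,\infty)} h\|_{H^s(\R)} = \|\chi_{(0,\infty)} \tilde h\|_{H^s} \les \|\tilde h\|_{H^s} \les \|h\|_{H^s(\R^+)}$.

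For the multiplier statement itself, I would argue first for $0 \le \sigma < \frac12$ using the Gagliardo--Slobodeckij characterization $\|f\|_{H^\sigma}^2 \sim \|f\|_{L^2}^2 + \iint \frac{|f(x)-f(y)|^2}{|x-y|^{1+2\sigma}}\,dx\,dy$. Multiplying by $\chi_{(0,\infty)}$ leaves the $L^2$ norm unchanged and only alters the double integral through pairs straddling the origin; for $x>0>y$ one has $(\chi_{(0,\infty)} f)(x) - (\chi_{(0,\infty)} f)(y) = f(x)$, so the additional contribution is controlled by $\int_0^\infty |f(x)|^2 \big(\int_{-\infty}^0 (x-y)^{-1-2\sigma}\,dy\big)\,dx \sim \int_0^\infty \frac{|f(x)|^2}{x^{2\sigma}}\,dx$. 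The fractional Hardy inequality, valid precisely for $0 < \sigma < \frac12$, bounds this by $\|f\|_{H^\sigma}^2$; the case $\sigma = 0$ is trivial. The range $-\frac12 < \sigma < 0$ then follows by duality: since $\chi_{(0,\infty)}$ is real, multiplication by it is self-adjoint on $L^2$, and $H^\sigma = (H^{-\sigma})^*$ with $-\sigma \in (0,\frac12)$.

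For part (ii), the hypothesis $h(0) = 0$ is what makes the argument work. Since $s > \frac12$, any extension $\tilde h$ is continuous and inherits $\tilde h(0) = h(0) = 0$, so differentiating in the distributional sense produces no boundary delta mass: $(\chi_{(0,\infty)} \tilde h)' = \chi_{(0,\infty)} \tilde h'$. Writing $s-1 \in (-\frac12,\frac12)$ and using $\la \xi\ra^s \les \la \xi\ra^{s-1}(1+|\xi|)$, one gets $\|\chi_{(0,\infty)} \tilde h\|_{H^s} \les \|\chi_{(0,\infty)} \tilde h\|_{H^{s-1}} + \|\chi_{(0,\infty)} \tilde h'\|_{H^{s-1}}$; both terms are estimated via part (i) applied at the exponent $s-1$, yielding $\les \|\tilde h\|_{H^{s-1}} + \|\tilde h'\|_{H^{s-1}} \les \|\tilde h\|_{H^s} \les \|h\|_{H^s(\R^+)}$. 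As before $\chi_{(0,\infty)} h = \chi_{(0,\infty)} \tilde h$ exactly.

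The main obstacle is the boundedness of the Heaviside multiplier in the sharp range, i.e. the fractional Hardy inequality underlying the $0 < \sigma < \frac12$ case; this is exactly where the restriction $|\sigma| < \frac12$ is forced, since for $\sigma = \frac12$ the weight $x^{-2\sigma} = x^{-1}$ sits at the threshold of the Hardy inequality and the estimate genuinely fails. Everything else---the extension step, the duality, and the reduction of (ii) to (i) through the vanishing of the boundary trace---is routine once this ingredient is in place.
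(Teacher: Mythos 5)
The paper never proves Lemma~\ref{lem:Hs0}; it is quoted with pointers to \cite{collianderkenig} and \cite{ETNLS}, so there is no internal proof to compare against. Your argument is a correct, self-contained proof, and it runs along the same lines as the classical proofs in those references: the core fact is that multiplication by $\chi_{(0,\infty)}$ is bounded on $H^\sigma(\R)$ for $|\sigma|<\frac12$, which you prove for $0\le\sigma<\frac12$ via the Gagliardo--Slobodeckij seminorm (the only new contribution comes from pairs straddling the origin, producing the weighted integral $\int_0^\infty |f|^2x^{-2\sigma}dx$) together with the fractional Hardy inequality, whose failure at $\sigma=\frac12$ is exactly what forces the excluded endpoints; the range $-\frac12<\sigma<0$ follows by self-adjointness and duality; and part (ii) reduces to part (i) by differentiating once, the hypothesis $h(0)=0$ ensuring no delta mass appears. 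Your observation that $\chi_{(0,\infty)}h=\chi_{(0,\infty)}\tilde h$ for any extension $\tilde h$ is also the correct way to convert half-line norms into full-line norms.

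One point deserves more care than you give it. When $\frac12<s<1$, the derivative $\tilde h'$ lies in $H^{s-1}$ with $s-1<0$, so it is not a function: both the product $\chi_{(0,\infty)}\tilde h'$ and the identity $(\chi_{(0,\infty)}\tilde h)'=\chi_{(0,\infty)}\tilde h'$ must be interpreted through the duality definition of the multiplier, and the identity then needs justification. This is routine --- approximate $\tilde h$ in $H^s$ by smooth functions vanishing at the origin (these are dense in the kernel of the trace map for $\frac12<s<\frac32$), note that the classical integration by parts gives the identity for such functions since there is no jump at $0$, and pass to the limit using that both sides are continuous from $H^s$ into $H^{-1}$ --- but it should be stated, since it is precisely the step where the trace condition enters at the distributional level. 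The same remark applies to the meaning of the left-hand side in part (i) when $-\frac12<s<0$. With these additions your proof is complete.
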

 
To construct the solutions of \eqref{first}   we first consider the linear problem:
\be\label{linearnls} \left\{ \begin{array}{l}
 iu_t+u_{xx} =0,\,\,\,\,x\in\R^+, t\in \R^+,\\
 u(x,0)=g(x)\in H^s(\R^+), \,\,\,\,u(0,t)=h(t)\in H^{\frac{2s+1}4}(\R^+), 
\end{array}
\right.
\ee
with the compatibility condition $h(0)=g(0)$ for $s>\frac12$. Note that the uniqueness of the solutions of equation \eqref{linearnls} follows  by considering the equation with $g=h=0$ with the method of odd extension. 

We refer the reader to \cite{bonaetal} and  \cite{ETNLS} for the derivation of the solution of  \eqref{linearnls},  for $t\in [0,1]$. We write
$$
u(t)=W_0^t(g,h)=W_0^t(0,h-p)+W_\R(t) g_e,
$$
where $g_e$ is an $H^s$ extension of $g$ to $\R$ satisfying $\|g_e\|_{H^s(\R)}\les \|g\|_{H^s(\R^+)}$. Moreover,  
$$p(t)= \eta(t ) D_0 (W_\R g_e)= \eta(t) [W_\R(t) g_e]\big|_{x=0},$$
 which is well defined and is in  $H^{\frac{2s+1}{4}}(\R^+)$ by Lemma~\ref{lem:kato} below.  
In addition, following \cite{bonaetal} and \cite{ETNLS} we  write  $ W_0^t(0, h)=W_{1}h+W_{2}  h $, where
\begin{align}\label{eq:w1}
W_{1}h(x,t)&=\frac1\pi\int_{0}^\infty e^{-i\beta^2t+i\beta x}\beta \widehat  h(-\beta^2) d\beta,\\
W_{2}h(x,t)&=\frac1\pi \int_{0}^\infty e^{i\beta^2 t-\beta x} \beta \widehat h(\beta^2) d\beta.
\end{align}
Here by a slight abuse of notation
\be\label{eq:halffourier}
\widehat h(\xi)=\mathcal F\big(\chi_{(0,\infty)} h\big)(\xi)=\int_0^\infty e^{-i\xi t } h(t) dt.
\ee
By a change of variable  we have
\be\label{eq:psiineq}
\sqrt{\int_0^\infty \langle \beta\rangle^{2s} \big|\beta \widehat h(\pm \beta^2)\big|^2 d\beta } \les \|\chi_{(0,\infty)} h\|_{H^\frac{2s+1}4(\R )} \les \|h\|_{H^\frac{2s+1}4(\R^+)},
\ee
where the last inequality follows from   Lemma~\ref{lem:Hs0} under the compatibility condition $h(0)=0$.  

Note that $W_{1}$ is already well defined for all $x ,t \in \R$ by \eqref{eq:w1}. We also extend $W_{2}$ to all $x$ as in \cite{ETNLS} by
\be\label{eq:w2}
W_{2}h(x,t) =\frac1\pi \int_{0}^\infty e^{i\beta^2 t-\beta x} \rho(\beta x) \beta \widehat h(\beta^2) d\beta,
\ee
where $\rho(x)$ is a smooth function supported on $(-2,\infty)$, and $\rho(x)=1$ for $x>0$. 
Therefore the solution $W_0^t(g,h) $ of \eqref{linearnls} for $t\in [0,1] $ is  now well defined for all $x,t\in\R$, and its restriction to $\R^+\times [0,1]$ is independent of the extension $g_e$.

We now consider the integral equation
\begin{equation}\label{eq:duhamel}
u(t)=\eta(t)W_0^t(g,h)  + \eta(t) \int_0^tW_\R(t- t^\prime)  F(u) \,d t^\prime - \eta(t) W_0^t\big(0, q  \big)(t),
\end{equation}
where
$$
F(u)= \eta(t/T) (iu^2  \overline{u}_x+\frac12|u|^4u)  \text{ and }  q(t)=\eta(t ) D_0\Big(\int_0^tW_\R(t- t^\prime)  F(u)\, d t^\prime \Big).
$$
Here $D_0f(t)=f(0,t)$.  
In what follows we will prove that the  integral equation \eqref{eq:duhamel} has a unique solution in the Banach space \eqref{eq:space} on $\R\times \R$ for some $T<1$. Using the definition of the boundary operator, it is clear that the restriction of $u$ to $\R^+\times [0,T]$ satisfies  \eqref{first}  in the distributional sense. Also note that the smooth solutions of \eqref{eq:duhamel} satisfy \eqref{first}  in the classical sense.

We will work with the space $X^{s,b}(\R\times\R)$ (see \cite{bourgain,Bou2}):
\be\label{def:xsb}
\|u\|_{X^{s,b}}=\big\| \widehat{u}(\tau,\xi)\la \xi\ra^{s} \la \tau+\xi^2\ra^{b} \big\|_{L^2_\tau L^2_\xi}.
\ee 
We  recall the embedding $X^{s,b}\subset C^0_t H^{s}_x$ for $b>\frac{1}{2}$ and the following inequalities from \cite{bourgain,gtv,etbook}. 
For any $s,b$ we have
\begin{equation}\label{eq:xs1}
\|\eta(t)W_\R g\|_{X^{s,b}}\les \|g\|_{H^s}.
\end{equation}
For any $s\in \mathbb R$,  $0\leq b_1<\frac12$, and $0\leq b_2\leq 1-b_1$, we have
\begin{equation}\label{eq:xs2}
\Big\| \eta(t) \int_0^t W_\R(t-t^\prime)  F(t^\prime ) dt^\prime \Big\|_{X^{s,b_2} }\lesssim   \|F\|_{X^{s,-b_1} }.
\end{equation}
Moreover, for $T<1$, and $-\frac12<b_1<b_2<\frac12$, we have
\begin{equation}\label{eq:xs3}
\|\eta(t/T) F \|_{X^{s,b_1}}\les T^{b_2-b_1} \|F\|_{X^{s,b_2}}.
\end{equation}
Finally, recall that, see e.g. \cite[Lemma 2.9]{taobook}, for any translation invariant Banach space $\mathcal B$ of functions on $\R\times \R$, the a priori estimate $\|W_\R g\|_{\mathcal B} \les \|g\|_{H^s}$ implies that
\begin{equation}\label{eq:xs4}
\| u \|_{\mathcal B}\les \|u\|_{X^{s,b}} \,\,\text{ for any } b>\frac12.
\end{equation}

\section{A priori estimates} \label{sec:apriori}
In this section we provide a priori estimates for the linear and   nonlinear terms in \eqref{eq:duhamel}.
\subsection{Estimates for the linear terms}

We start with the following Kato smoothing type estimates which convert space derivatives to time derivatives. These  estimates justify the choice of spaces concerning $g$, $h$ in Definition~\ref{def:lwp}.   

\begin{lemma}\label{lem:kato}(Kato smoothing inequality) Fix $s\geq 0$. For any $g\in H^s(\R)$, we have
$\eta(t) W_\R g\in C^0_xH^{\frac{2s+1}4}_t(\R\times \R)$, and 
$$
\big\|\eta  W_\R g \big\|_{L^\infty_x H^{\frac{2s+1}{4}}_t }\les \|g\|_{H^s(\R)}.
$$
In addition, for $s\geq \frac12$, and for any $g\in H^s(\R)$, we have
$\eta(t) \partial_x W_\R g\in C^0_xH^{\frac{2(s-1)+1}4}_t(\R\times \R)$, and  
$$
\big\|\eta \partial_x W_\R g \big\|_{L^\infty_x H^{\frac{2(s-1)+1}{4}}_t }\les \|g\|_{H^s(\R)}.
$$
\end{lemma}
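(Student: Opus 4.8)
The plan is to prove both bounds first for Schwartz data $g$, where every Fourier integral converges absolutely, and then extend to general $g\in H^s(\R)$ by density. Since the asserted estimates have the form $\|\eta W_\R g\|_{L^\infty_x H^\rho_t}\lesssim\|g\|_{H^s}$ (with $\rho=\frac{2s+1}4$, resp.\ $\rho'=\frac{2(s-1)+1}4$ for the derivative), they control $\eta W_\R$ applied to differences; hence the bound passes to the $H^s$-limit, and because $C^0_x H^\rho_t$ is closed under limits that are uniform in $x$ in the $H^\rho_t$-norm, the continuity assertion survives as well. So I reduce to the two norm bounds for Schwartz $g$ (for which joint continuity in $x$ is automatic), writing $W_\R g(x,t)=\frac1{2\pi}\int_\R e^{ix\xi-it\xi^2}\widehat g(\xi)\,d\xi$ and splitting the frequency integral as $W_\R g=v_{\mathrm{low}}+v_{\mathrm{high}}$ according to $|\xi|\le1$ and $|\xi|>1$.

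For the low part, differentiating under the integral gives $|\partial_t^k v_{\mathrm{low}}(x,t)|\lesssim\int_{|\xi|\le1}|\xi|^{2k}|\widehat g(\xi)|\,d\xi\lesssim\|g\|_{L^2}$, uniformly in $(x,t)$. Thus $\eta(t)\,v_{\mathrm{low}}(x,\cdot)$ is smooth, compactly supported, with all $t$-derivatives bounded by $\|g\|_{L^2}\le\|g\|_{H^s}$, so $\|\eta\,v_{\mathrm{low}}(x,\cdot)\|_{H^\rho_t}\lesssim\|g\|_{H^s}$ uniformly in $x$. This is where the singularity of the forthcoming Jacobian at $\xi=0$ is avoided.

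The core is the high part. On each half-line $\xi>1$ and $\xi<-1$ I change variables $\tau=\xi^2$, using $d\xi=\frac{d\tau}{2\sqrt\tau}$, to obtain $v_{\mathrm{high}}(x,t)=\frac1{4\pi}\int_1^\infty e^{-it\tau}\bigl[e^{ix\sqrt\tau}\widehat g(\sqrt\tau)+e^{-ix\sqrt\tau}\widehat g(-\sqrt\tau)\bigr]\frac{d\tau}{\sqrt\tau}$. Reading off the temporal Fourier transform (supported in $\{\lambda<-1\}$, with $\tau=-\lambda$) and applying Plancherel in $t$ gives $\|v_{\mathrm{high}}(x,\cdot)\|_{H^\rho_t}^2\lesssim\int_1^\infty\langle\tau\rangle^{2\rho}\bigl(|\widehat g(\sqrt\tau)|^2+|\widehat g(-\sqrt\tau)|^2\bigr)\frac{d\tau}\tau$, uniformly in $x$ since the exponentials are unimodular. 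Changing back to $\xi$ with $\langle\tau\rangle^{2\rho}\approx|\xi|^{4\rho}=|\xi|^{2s+1}$ and $\frac{d\tau}\tau\approx\frac{d\xi}{|\xi|}$ produces exactly $\int_{|\xi|>1}|\xi|^{2s}|\widehat g(\xi)|^2\,d\xi\lesssim\|g\|_{H^s}^2$; this is the Kato gain, in which the factor $\frac1{\sqrt\tau}$ from the change of variables upgrades $s$ spatial derivatives to $\frac{2s+1}4$ temporal ones. Finally $\|\eta\,v_{\mathrm{high}}\|_{H^\rho_t}\lesssim\|v_{\mathrm{high}}\|_{H^\rho_t}$ because multiplication by the fixed $\eta\in C_c^\infty$ is bounded on $H^\rho_t$ for $\rho\ge0$. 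The derivative estimate repeats this splitting for $\partial_x W_\R g=\frac1{2\pi}\int_\R i\xi\,e^{ix\xi-it\xi^2}\widehat g\,d\xi$ with $\rho'=\rho-\frac12$; the low part is unchanged, while in the high part the extra factor $i\xi=\pm i\sqrt\tau$ cancels the Jacobian, replacing $\frac{d\tau}{\sqrt\tau}$ by $d\tau$, so the same Plancherel computation yields $\int_{|\xi|>1}|\xi|^{4\rho'+1}|\widehat g|^2\,d\xi=\int_{|\xi|>1}|\xi|^{2s}|\widehat g|^2\,d\xi\lesssim\|g\|_{H^s}^2$. The hypothesis $s\ge\frac12$ enters precisely to guarantee $\rho'\ge0$, so that $H^{\rho'}_t$ is an honest inhomogeneous Sobolev space and multiplication by $\eta$ remains bounded.

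The $x$-continuity into $H^\rho_t$ (resp.\ $H^{\rho'}_t$) then follows by dominated convergence: the integrands of the temporal Fourier transforms depend on $x$ only through the unimodular factors $e^{\pm ix\sqrt\tau}$ and are dominated, uniformly in $x$, by the $x$-independent integrable majorants appearing in the norm bounds above. I expect the only genuinely delicate points to be (i) justifying the Fourier manipulations when $s\le\frac12$, where $\widehat g$ need not be in $L^1$ and $W_\R g$ is defined only through $L^2$-inversion — resolved by the density reduction — and (ii) tracking the change of variables $\tau=\xi^2$ carefully enough to see that it supplies exactly the factor $\tau^{-1/2}$ responsible for the gain, and that this gain is precisely consumed by the extra $\xi$ in the derivative estimate, which is the structural reason one reaches only $\rho'=\rho-\frac12$ and needs $s\ge\frac12$.
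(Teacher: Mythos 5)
Your proof is correct and is essentially the paper's argument: the same low/high frequency splitting at $|\xi|=1$, followed by the change of variables $\tau=\xi^2$ whose Jacobian produces the $\frac14$-derivative gain in time and is exactly consumed by the extra factor $\xi$ in the derivative estimate (which is the structural reason for $s\geq\frac12$ there). The only organizational difference is that you first prove the global-in-time bound for the free evolution via Plancherel and then invoke boundedness of multiplication by $\eta$ on $H^{\rho}_t$, whereas the paper keeps $\eta$ inside from the start and runs Young's inequality against $\widehat\eta(\tau+\xi^2)$ (and cites the first, non-derivative, part rather than reproving it); these two treatments of the cutoff are interchangeable.
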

 \begin{proof}
The first part is the well known Kato smoothing theorem, see e.g.  \cite{ETNLS}. The second part follows from the first part for $s\geq 1$ since $\partial_x$ commutes with $W_\R$. For $s\in[\frac12,1)$, note that
\begin{multline*}
\mathcal F_t \big(  \eta  \partial_xW_\R g \big)(\tau)=i\int \widehat\eta(\tau+\xi^2) e^{ix\xi} \xi \widehat{g}(\xi) d\xi \\ =i\int_{|\xi|<1} \widehat\eta(\tau+\xi^2) e^{ix\xi} \xi \widehat{g}(\xi) d\xi +i \int_{|\xi|\geq 1} \widehat\eta(\tau+\xi^2) e^{ix\xi}\xi \widehat{g}(\xi) d\xi.
\end{multline*}
We estimate the contribution of the first term to  $H^{\frac{2s-1}{4}}_t$ norm by
$$
\int_{|\xi|<1} \big\|\la\tau\ra^{\frac{2s-1}{4}}\widehat\eta(\tau+\xi^2)\big\|_{L^2_\tau} |\widehat{g}(\xi)| d\xi \les \int_{|\xi|<1} |\widehat{g}(\xi)| d\xi
\les \|\widehat g\|_{L^2}\les \|g\|_{H^s}.
$$
By a change of variable, the contribution of the second term is bounded by
$$ \Big\|\int_1^\infty  \la \tau\ra^{\frac{2s-1}{4}} |\widehat\eta(\tau+\rho)|  |\widehat{g}(\pm\sqrt{\rho})| d\rho\Big\|_{L^2_\tau}
\les \Big\|\int_1^\infty  \la \tau+\rho\ra^{\frac{2s-1}{4}}|\widehat\eta(\tau+\rho)| \rho^{\frac{2s-1}{4}} |\widehat{g}(\pm\sqrt{\rho})| d\rho\Big\|_{L^2_\tau} .
$$
By Young's inequality, we estimate this by
$$
\|\la\cdot\ra^{\frac{2s-1}{4}} \widehat{\eta} \|_{L^1} \Big\|\rho^{\frac{2s-1}{4}} \widehat{g}(\pm\sqrt{\rho}) \Big\|_{L^2_{\rho>1}}\les \|g\|_{H^s}.
$$
The continuity statement follows from this and the dominated convergence theorem.
\end{proof}
Lemma~\ref{lem:wbcont} and Proposition~\ref{prop:wbh} below show that the boundary operator belongs to the space \eqref{eq:space}.  
\begin{lemma}\label{lem:wbcont}  Let  $s\geq 0 $. Then for  $h$ satisfying  $\chi_{(0,\infty)}h\in H^{\frac{2s+1}{4}}(\R )$,   we have
$$W_0^t(0, h) \in C^0_tH^s_x(\R\times \R)\,\,\, and \,\,\,\eta(t)W_0^t(0, h) \in C^0_xH^{\frac{2s+1}4}_t(\R\times \R).$$

In addition, 
 for $s\geq \frac12 $  and for  $h$ satisfying  $\chi_{(0,\infty)}h\in H^{\frac{2s+1}{4}}(\R )$,   we have  
 $$\eta(t)\partial_x W_0^t(0, h) \in C^0_xH^{\frac{2(s-1)+1}4}_t(\R\times \R).$$
\end{lemma}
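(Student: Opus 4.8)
The plan is to use the decomposition $W_0^t(0,h)=W_{1}h+W_{2}h$ from \eqref{eq:w1}--\eqref{eq:w2} and treat the two pieces separately, since $W_{1}$ is a genuine Schr\"odinger evolution whereas $W_{2}$ carries the boundary layer $e^{-\beta x}\rho(\beta x)$. For $W_{1}$ I would first observe that \eqref{eq:w1} is exactly $W_{1}h(\cdot,t)=e^{it\Delta}v_1$, where $\widehat{v_1}(\beta)=2\beta\,\widehat h(-\beta^2)\,\chi_{(0,\infty)}(\beta)$; the estimate \eqref{eq:psiineq} gives $\|v_1\|_{H^s}\les\|\chi_{(0,\infty)}h\|_{H^{\frac{2s+1}4}}$, so $v_1\in H^s$. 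All three assertions for $W_{1}$ then follow from standard facts about $e^{it\Delta}$: strong continuity of the group on $H^s$ yields $W_{1}h\in C^0_tH^s_x$, and the two Kato estimates of Lemma~\ref{lem:kato}, applied to $g=v_1$ (the second requiring $s\ge\frac12$), give $\eta W_{1}h\in C^0_xH^{\frac{2s+1}4}_t$ and $\eta\,\partial_x W_{1}h\in C^0_xH^{\frac{2(s-1)+1}4}_t$.

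The core of the lemma is the spatial estimate $W_{2}h(\cdot,t)\in H^s_x(\R)$, uniformly in $t$. I would compute the spatial Fourier transform of \eqref{eq:w2}: substituting $y=\beta x$ in the inner integral produces
\[
\widehat{W_{2}h}(\xi,t)=\frac1\pi\int_0^\infty e^{i\beta^2 t}\,\widehat h(\beta^2)\,m(\xi/\beta)\,d\beta,\qquad m(\zeta):=\int_\R e^{-iy\zeta}e^{-y}\rho(y)\,dy,
\]
where $m$ is Schwartz because $e^{-y}\rho(y)$ is smooth, supported in $(-2,\infty)$, and exponentially decaying. Writing $G(\beta):=\la\beta\ra^s\beta\,\widehat h(\beta^2)$, so that $\|G\|_{L^2(0,\infty)}\les\|\chi_{(0,\infty)}h\|_{H^{\frac{2s+1}4}}$ by \eqref{eq:psiineq}, and using the elementary inequality $\la\xi\ra\le\la\beta\ra\la\xi/\beta\ra$ (valid for $\beta>0$), I can dominate $\la\xi\ra^s|\widehat{W_{2}h}(\xi,t)|$, uniformly in $t$ since $|e^{i\beta^2 t}|=1$, by the dilation-invariant operator
\[
\la\xi\ra^s|\widehat{W_{2}h}(\xi,t)|\le\frac1\pi\int_0^\infty\frac1\beta\,M(\xi/\beta)\,|G(\beta)|\,d\beta,\qquad M(\zeta):=\la\zeta\ra^s|m(\zeta)|.
\]
The main obstacle is precisely that this kernel does not decay as $\beta\to\infty$: since $M(\xi/\beta)\to M(0)=|m(0)|\neq0$, a naive Schur test diverges logarithmically. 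I would resolve this by passing to multiplicative coordinates $\xi=\pm e^{\sigma}$, $\beta=e^{\rho}$ and conjugating by $e^{\rho/2}$, so that the operator becomes ordinary convolution on $\R$ by $\widetilde M_\pm(\tau)=e^{\tau/2}M(\pm e^{\tau})$, with $\|\widetilde M_\pm\|_{L^1(\R)}=\int_0^\infty u^{-1/2}M(\pm u)\,du<\infty$; the weight $u^{-1/2}$ is integrable at the origin despite $M(0)\neq0$, which is exactly what saves the borderline case. Young's inequality then yields $\|W_{2}h(\cdot,t)\|_{H^s_x}\les\|G\|_{L^2}\les\|\chi_{(0,\infty)}h\|_{H^{\frac{2s+1}4}}$ for every $s\ge0$ uniformly in $t$, and the continuity $W_{2}h\in C^0_tH^s_x$ follows from dominated convergence applied to the integral above.

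For the temporal estimates I would work on the $t$-side. Taking $\mathcal F_t$ of $\eta\,W_{2}h(x,\cdot)$ and changing variables $r=\beta^2$, whose Jacobian exactly cancels the amplitude factor $\beta$, gives, after estimating the uniformly bounded factor $|e^{-\beta x}\rho(\beta x)|\les1$,
\[
\big|\mathcal F_t[\eta\,W_{2}h(x,\cdot)](\tau)\big|\les\int_0^\infty|\widehat\eta(\tau-r)|\,|\widehat h(r)|\,dr.
\]
Distributing the weight via $\la\tau\ra^{\frac{2s+1}4}\les\la\tau-r\ra^{\frac{2s+1}4}\la r\ra^{\frac{2s+1}4}$, absorbing $\la\tau-r\ra^{\frac{2s+1}4}|\widehat\eta(\tau-r)|$ into an $L^1$ function, and applying Young's inequality reduces matters to $\|\la r\ra^{\frac{2s+1}4}\widehat h(r)\|_{L^2(r>0)}\le\|\chi_{(0,\infty)}h\|_{H^{\frac{2s+1}4}}$, uniformly in $x$; continuity in $x$ again follows from dominated convergence. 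The derivative statement is identical after noting $\partial_x\big(e^{-\beta x}\rho(\beta x)\big)=\beta\,B(\beta x)$ with $B(y):=e^{-y}(\rho'(y)-\rho(y))$ bounded, which replaces the amplitude $\beta\,\widehat h(\beta^2)$ by $\beta^2\,\widehat h(\beta^2)$; the change of variables then leaves $\la r\ra^{\frac{2s-1}4}\sqrt r\,|\widehat h(r)|\le\la r\ra^{\frac{2s+1}4}|\widehat h(r)|$ (using $\sqrt r\le\la r\ra^{1/2}$), and the same Young estimate closes the bound in $H^{\frac{2s-1}4}_t$, the restriction $s\ge\frac12$ guaranteeing, as in Lemma~\ref{lem:kato}, a nonnegative target regularity.
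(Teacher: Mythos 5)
Your proof is correct, but for the boundary piece $W_2$ it takes a genuinely different route from the paper's. The paper quotes \cite{ETNLS} for the whole first part of the lemma, and for the derivative claim it uses Parseval to rewrite
\begin{equation*}
W_2h(x,t)=\int_\R \widehat f(\xi)\,(e^{-it\Delta}\psi)(x\xi)\,d\xi,\qquad f(y)=e^{-y}\rho(y),\quad \widehat\psi(\beta)=\beta\widehat h(\beta^2)\chi_{[0,\infty)}(\beta),
\end{equation*}
so that $\partial_x W_2h=\int_\R\xi\widehat f(\xi)\,(e^{-it\Delta}\psi')(x\xi)\,d\xi$; since $\psi'\in H^{s-1}$, each dilate obeys the Kato smoothing bound of Lemma~\ref{lem:kato} uniformly in the spatial point $x\xi$, and Minkowski's inequality with $\xi\widehat f(\xi)\in L^1$ plus dominated convergence finish the argument (the same device, $W_1h=W_\R\psi$, handles $W_1$, exactly as you do). You instead prove everything from scratch by direct Fourier computations: in $x$ for the $C^0_tH^s_x$ bound, where the resulting dilation-invariant (Mellin-type) kernel is tamed by passing to logarithmic coordinates and applying Young's inequality --- your observation that the weight $z^{-1/2}$ is integrable at the origin, compensating for $m(0)\neq 0$, is exactly the right point, and the inequality $\la\xi\ra\le\la\beta\ra\la\xi/\beta\ra$ makes the reduction to \eqref{eq:psiineq} clean; and in $t$ for the two temporal bounds, using that $|e^{-\beta x}\rho(\beta x)|\les 1$ uniformly in $x\in\R$, Peetre's inequality, and Young. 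What each approach buys: the paper's superposition trick is shorter and recycles Lemma~\ref{lem:kato} wholesale, so no new kernel estimate is ever needed; yours is self-contained (no appeal to \cite{ETNLS} for the first part) and makes transparent where the regularity of the boundary layer comes from. One technical point you should flag: for rough $h$ the interchanges of the $\beta$-integral with $\mathcal F_x$ or $\mathcal F_t$ are not always absolutely convergent (e.g. at $x=0$ when $s\le\frac12$), so the bounds should first be proved for $\widehat h$ smooth and compactly supported and then extended by density; this is standard and is glossed over in the paper as well.
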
 
\begin{proof} For the first part see \cite{ETNLS}.  
To prove that $\eta(t)\partial_x W_2h \in C^0_xH^{\frac{2s-1}4}_t(\R\times \R)$, write
\begin{align*}
W_2h  = \int_\R f(\beta x)   \mathcal F\big(e^{-it \Delta } \psi\big)(\beta) d\beta = \int_\R \frac1x \widehat f(\xi/x)  (e^{-it \Delta } \psi) (\xi) d\xi
=  \int_\R   \widehat f(\xi )  (e^{-it \Delta } \psi) (x \xi) d\xi,
\end{align*}
where $f(x)=e^{- x} \rho(x)$ and $\widehat \psi(\beta)=\beta \widehat h( \beta^2) \chi_{[0,\infty)}(\beta)$. 
We therefore obtain
$$
\partial_x W_2h =  \int_\R   \xi\widehat f(\xi )  (e^{-it \Delta } \psi^\prime) (x \xi) d\xi.
$$
 
The claim follows from  using the  Kato smoothing Lemma~\ref{lem:kato} and dominated convergence theorem noting that $ \xi \widehat f(\xi)  \in L^1$.

Finally, note that $W_1h=W_\R \psi$,
where $ \widehat \psi(\beta)=\beta \widehat h(-\beta^2) \chi_{[0,\infty)}(\beta)$. 
The claim $\eta(t)\partial_x W_1h \in C^0_xH^{\frac{2s-1}4}_t(\R\times \R)$ follows  as above from \eqref{eq:halffourier}, \eqref{eq:psiineq}, the continuity of $W_\R(t)$, and Kato smoothing Lemma~\ref{lem:kato}.
\end{proof}

We also record the following bound from \cite{ETNLS}: 
\begin{proposition} \label{prop:wbh} Let $b\leq \frac12$ and $s\geq 0 $. Then for  $h$ satisfying  $\chi_{(0,\infty)}h\in H^{\frac{2s+1}{4}}(\R )$,   we have
$$\|\eta(t) W_0^t(0,h) \|_{X^{s,b}} \les \|\chi_{(0,\infty)}h\|_{H_t^{\frac{2s+1}4}(\R )}.$$
\end{proposition}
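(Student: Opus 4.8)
The plan is to use the decomposition $W_0^t(0,h)=W_1h+W_2h$ of \eqref{eq:w1}--\eqref{eq:w2} and bound the two pieces separately. Since $\langle\tau+\xi^2\rangle^{2b}\le\langle\tau+\xi^2\rangle$ whenever $b\le\frac12$, it suffices to treat the endpoint $b=\frac12$. The term $W_1h$ is a genuine free evolution: as in the proof of Lemma~\ref{lem:wbcont}, $W_1h=W_\R\psi$ with $\widehat\psi(\beta)=\beta\widehat h(-\beta^2)\chi_{[0,\infty)}(\beta)$. Thus \eqref{eq:xs1} gives $\|\eta W_1h\|_{X^{s,b}}\les\|\psi\|_{H^s}$ for every $b$, and \eqref{eq:psiineq} identifies $\|\psi\|_{H^s}\les\|\chi_{(0,\infty)}h\|_{H^{\frac{2s+1}4}}$, disposing of $W_1$.

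For $W_2h$ I would compute the space--time Fourier transform. Writing $f(y)=e^{-y}\rho(y)$ so that $e^{-\beta x}\rho(\beta x)=f(\beta x)$, and using $\mathcal F_x[f(\beta\,\cdot)](\xi)=\frac1\beta\widehat f(\xi/\beta)$ together with $\mathcal F_t[\eta e^{i\beta^2\cdot}](\tau)=\widehat\eta(\tau-\beta^2)$, one gets
$$\widehat{\eta W_2 h}(\tau,\xi)=\frac1\pi\int_0^\infty\widehat\eta(\tau-\beta^2)\,\widehat f(\xi/\beta)\,\widehat h(\beta^2)\,d\beta.$$
The modulation weight is essentially harmless here: from $\langle\tau+\xi^2\rangle\les\langle\tau-\beta^2\rangle\langle\beta^2+\xi^2\rangle$, the elementary inequalities $\langle\beta^2+\xi^2\rangle\le\langle\beta\rangle^2\langle\xi/\beta\rangle^2$ and $\langle\xi\rangle\le\langle\beta\rangle\langle\xi/\beta\rangle$, and the rapid decay of $\widehat f$, I would absorb the weights into $\widetilde\eta(\cdot):=\langle\cdot\rangle^{1/2}|\widehat\eta(\cdot)|$ and $F(y):=\langle y\rangle^{s+1}|\widehat f(y)|$, both Schwartz, to reach the pointwise bound
$$\langle\xi\rangle^s\langle\tau+\xi^2\rangle^{1/2}\big|\widehat{\eta W_2 h}(\tau,\xi)\big|\les\int_0^\infty\widetilde\eta(\tau-\beta^2)\,\langle\beta\rangle^{s+1}|\widehat h(\beta^2)|\,F(\xi/\beta)\,d\beta.$$

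The heart of the matter is to take the $L^2_{\tau,\xi}$ norm of this. The naive route — Minkowski in $\xi$ followed by Young's inequality in $\tau$ — is too lossy near $\beta=0$, since after $\rho=\beta^2$ the Jacobian $\rho^{-1/2}$ is badly behaved against $|\widehat h|^2$. Instead I would apply Cauchy--Schwarz in $\beta$ while retaining the factor $\widetilde\eta(\tau-\beta^2)$ intact (a Schur-type argument), so the smoothing of $\widehat\eta$ is used rather than thrown away. This yields
$$\|\eta W_2 h\|_{X^{s,1/2}}^2\les\int_0^\infty\langle\beta\rangle^{2s+2}|\widehat h(\beta^2)|^2\,B(\beta)\,d\beta,\qquad B(\beta)=\int_\R\!\int_\R G(\tau,\xi)\,\widetilde\eta(\tau-\beta^2)\,F(\xi/\beta)\,d\xi\,d\tau,$$
with $G(\tau,\xi)=\int_0^\infty\widetilde\eta(\tau-\beta'^2)F(\xi/\beta')\,d\beta'$. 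The $\tau$--integral produces the rapidly decaying autocorrelation $(\widetilde\eta\star\widetilde\eta)(\beta^2-\beta'^2)$, which localizes $\beta'^2\approx\beta^2$, while the $\xi$--integral satisfies $\int_\R F(\xi/\beta)F(\xi/\beta')\,d\xi\les\min(\beta,\beta')$. Combining the overlap bound with the autocorrelation Jacobian gives $B(\beta)\les\beta/\langle\beta\rangle$, whence
$$\|\eta W_2 h\|_{X^{s,1/2}}^2\les\int_0^\infty\langle\beta\rangle^{2s+1}\beta\,|\widehat h(\beta^2)|^2\,d\beta\approx\int_0^\infty\langle\rho\rangle^{\frac{2s+1}2}|\widehat h(\rho)|^2\,d\rho\le\|\chi_{(0,\infty)}h\|_{H^{\frac{2s+1}4}}^2,$$
the middle step being the change of variables $\rho=\beta^2$.

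The main obstacle is precisely this low-frequency analysis, and the crucial structural point is that one must compare $\|\eta W_2 h\|_{X^{s,b}}$ with the \emph{full} trace norm $\|\chi_{(0,\infty)}h\|_{H^{\frac{2s+1}4}}$, not with the smaller quantity $\int_0^\infty\langle\beta\rangle^{2s}|\beta\widehat h(\beta^2)|^2\,d\beta$ of \eqref{eq:psiineq}. The extra power of $\beta$ that the full norm carries near $\rho=0$ is exactly what matches the factor $\beta/\langle\beta\rangle$ produced by $B(\beta)$; retaining the $\widehat\eta$-smoothing through the Schur/Cauchy--Schwarz step (rather than Young's inequality) is what makes this power of $\beta$ visible.
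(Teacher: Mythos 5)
Your proof is correct, but there is nothing in this paper to compare it against: Proposition~\ref{prop:wbh} is merely \emph{recorded} from \cite{ETNLS}, with no proof given here, so you have in effect supplied the omitted argument. Your route is the natural one and is in the same spirit as the cited source: reduce to the endpoint $b=\tfrac12$ (legitimate, since $\la\tau+\xi^2\ra^{b}\le\la\tau+\xi^2\ra^{1/2}$ for $b\le\tfrac12$), dispose of $W_1$ as the genuine free evolution $W_\R\psi$ via \eqref{eq:xs1} and \eqref{eq:psiineq}, and attack $W_2$ on the Fourier side, which is where all the content lies. I checked the $W_2$ steps: the formula for $\widehat{\eta W_2h}$ is right (the factor $\beta$ in the measure cancels the $1/\beta$ from rescaling $\widehat f$); the weight inequalities $\la\tau+\xi^2\ra\les\la\tau-\beta^2\ra\la\beta^2+\xi^2\ra$, $\la\beta^2+\xi^2\ra\le\la\beta\ra^2\la\xi/\beta\ra^2$, and $\la\xi\ra\le\la\beta\ra\la\xi/\beta\ra$ are elementary and correct; the symmetrized Cauchy--Schwarz in $(\beta,\beta')$ produces exactly your $B(\beta)$; and the key bound $B(\beta)\les\min(\beta,1)\approx\beta/\la\beta\ra$ does follow from the correlation decay $\les\la\beta^2-\beta'^2\ra^{-N}$ together with $\int F(\xi/\beta)F(\xi/\beta')\,d\xi\les\min(\beta,\beta')$: for $\beta\le1$ bound the minimum by $\beta$ and note that the $\beta'$-integral of the correlation is $O(1)$, while for $\beta\ge1$ bound the minimum by $\beta'$ and substitute $\rho'=\beta'^2$. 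The concluding change of variables then lands on the full $H^{\frac{2s+1}4}$ norm of $\chi_{(0,\infty)}h$, as the statement requires.

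Two minor remarks. First, $\widetilde\eta$ and $F$ are not literally Schwartz (taking absolute values destroys smoothness), but all you use is boundedness and rapid decay, which they have, so this is cosmetic. Second, your diagnosis of why the naive Minkowski/Young route fails is accurate: it produces the low-frequency weight $\rho^{-1/2}\la\rho\ra^{s+1}$ against $|\widehat h(\rho)|^2$, which is too singular at $\rho=0$ compared with the target weight $\la\rho\ra^{s+\frac12}$, and retaining the localization $\widehat\eta(\tau-\beta^2)$ through the Schur-type Cauchy--Schwarz step is a legitimate fix for exactly that defect.
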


 The following proposition is a Kato smoothing type estimate for the nonlinear Duhamel term:
 \begin{proposition}\label{prop:duhamelkato} Fix $b<\frac12$.  For any smooth compactly supported function $\eta$, we have
\begin{align*}
\Big\|\eta  \int_0^tW_\R(t- t^\prime) F  dt^\prime  \Big\|_{C^0_xH^{\frac{2s+1}{4}}_t(\R\times \R)}\les \left\{ \begin{array}{ll} \|F\|_{X^{s,-b}},&  0\leq s \leq \frac12,   \\
\|F\|_{X^{\frac12,\frac{2s-1-4b}{4}}} +\|F\|_{X^{s,-b}}, &   \frac12 \leq s \leq \frac52. \end{array}
\right.
\end{align*}

In addition, we have
\begin{align*}
\Big\|\eta  \partial_x \int_0^tW_\R(t- t^\prime) F  dt^\prime  \Big\|_{C^0_xH^{\frac{2s-1}{4}}_t(\R\times \R)}\les \left\{ \begin{array}{ll} \|F\|_{X^{s,-b}},& \frac12\leq s \leq \frac32,  \\
\|F\|_{X^{\frac12,\frac{2s-1-4b}{4}}} +\|F\|_{X^{s,-b}}, &   \frac32 \leq s \leq \frac52. \end{array}
\right.
\end{align*}
\end{proposition}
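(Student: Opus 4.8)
The plan is to pass to the Fourier side and reduce everything to the linear Kato smoothing inequality of Lemma~\ref{lem:kato}. Writing $\widehat F(\xi,\tau)$ for the space--time Fourier transform, evaluation of the time integral shows that the spatial Fourier transform of $\int_0^tW_\R(t-t^\prime)F\,dt^\prime$ equals $\frac1{2\pi}\int_\R\widehat F(\xi,\tau)\frac{e^{it\tau}-e^{-it\xi^2}}{i(\tau+\xi^2)}\,d\tau$. I would split the $\tau$-integral into the resonant set $|\tau+\xi^2|\le1$ and its complement. On the resonant set I write $e^{it\tau}-e^{-it\xi^2}=e^{-it\xi^2}(e^{it(\tau+\xi^2)}-1)$ and Taylor expand $\frac{e^{it(\tau+\xi^2)}-1}{i(\tau+\xi^2)}=\sum_{k\ge0}\frac{i^kt^{k+1}}{(k+1)!}(\tau+\xi^2)^k$; the $k$-th term is of the form $\eta(t)t^{k+1}W_\R(t)G_k$ with $\mathcal F_xG_k(\xi)=\int_{|\tau+\xi^2|\le1}\widehat F(\xi,\tau)(\tau+\xi^2)^k\,d\tau$, and Cauchy--Schwarz in $\tau$ (using $b<\tfrac12$) gives $\|G_k\|_{H^s}\les c_k\|F\|_{X^{s,-b}}$ with $\sum_kc_k/(k+1)!<\infty$. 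Lemma~\ref{lem:kato}, applied termwise, then controls the resonant contribution by $\|F\|_{X^{s,-b}}$.

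On the non-resonant set the factor $\frac{-e^{-it\xi^2}}{i(\tau+\xi^2)}$ produces a term of the form $-\eta(t)W_\R(t)G$ with $\mathcal F_xG(\xi)=\int_{|\tau+\xi^2|>1}\frac{\widehat F(\xi,\tau)}{i(\tau+\xi^2)}\,d\tau$; since $\int_{|\lambda|>1}\la\lambda\ra^{2b}\lambda^{-2}\,d\lambda<\infty$ exactly when $b<\tfrac12$, one has $\|G\|_{H^s}\les\|F\|_{X^{s,-b}}$ and Lemma~\ref{lem:kato} disposes of this piece as well. The only genuinely new object is the term carrying the free temporal oscillation, $\eta(t)\,\mathcal F^{-1}_{\xi\to x}\big[\int_{|\tau+\xi^2|>1}\frac{\widehat F(\xi,\tau)}{i(\tau+\xi^2)}e^{it\tau}\,d\tau\big]$. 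For fixed $x$ this is $\eta$ times a function whose temporal Fourier transform is $\widehat\eta$ convolved with $M(x,\tau):=\int e^{ix\xi}\frac{\widehat F(\xi,\tau)\chi_{|\tau+\xi^2|>1}}{i(\tau+\xi^2)}\,d\xi$; since $\widehat\eta$ is Schwartz, Young's inequality absorbs the weight $\la\sigma\ra^{(2s+1)/4}$, and taking the supremum over $x$ (continuity in $x$ following from dominated convergence as before) the claimed $C^0_xH^{(2s+1)/4}_t$ bound reduces to $\big\|\la\tau\ra^{(2s+1)/4}N(\tau)\big\|_{L^2_\tau}\les(\text{right-hand side})$, where $N(\tau):=\int_{|\tau+\xi^2|>1}\frac{|\widehat F(\xi,\tau)|}{|\tau+\xi^2|}\,d\xi\ge\sup_x|M(x,\tau)|$.

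To estimate this weighted $L^2_\tau$ norm I would split the $\xi$-integral defining $N(\tau)$ according to whether the modulation dominates the frequency, $\la\tau+\xi^2\ra\ges\la\xi\ra^2$, or not, and in each region apply Cauchy--Schwarz in $\xi$ together with the elementary integral estimate recorded in the Appendix. When $\la\xi\ra^2\ges\la\tau+\xi^2\ra$ one has $\la\tau\ra\les\la\xi\ra^2$, so $\la\tau\ra^{(2s+1)/4}\les\la\xi\ra^{s+1/2}$; pairing against the $X^{s,-b}$ density $\la\xi\ra^s\la\tau+\xi^2\ra^{-b}|\widehat F|$ leaves the kernel $\int\la\xi\ra\la\tau+\xi^2\ra^{2b-2}\chi_{|\tau+\xi^2|>1}\,d\xi$, which is bounded uniformly in $\tau$ precisely when $b<\tfrac12$. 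When instead $\la\tau+\xi^2\ra\ges\la\xi\ra^2$ one has $\la\tau\ra\les\la\tau+\xi^2\ra$, so $\la\tau\ra^{(2s+1)/4}\les\la\tau+\xi^2\ra^{(2s+1)/4}$; pairing against the $X^{1/2,(2s-1)/4-b}$ density $\la\xi\ra^{1/2}\la\tau+\xi^2\ra^{(2s-1)/4-b}|\widehat F|$ leaves the kernel $\int\la\xi\ra^{-1}\la\tau+\xi^2\ra^{2b-1}\chi_{|\tau+\xi^2|>1}\,d\xi$, again uniformly bounded for $b<\tfrac12$. This yields the two-norm bound; for $s\le\tfrac12$ the second region can equally be handled by $X^{s,-b}$, which is why only a single norm appears in that range.

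The estimate for $\partial_x\int_0^tW_\R(t-t^\prime)F\,dt^\prime$ follows the same scheme with an extra factor $\xi$ inserted from the derivative and the target temporal regularity lowered to $\frac{2s-1}{4}$: the resonant and non-resonant linear pieces are handled by the second, derivative part of Lemma~\ref{lem:kato}, while the new term reduces to bounding $\big\|\la\tau\ra^{(2s-1)/4}\int_{|\tau+\xi^2|>1}\frac{|\xi|\,|\widehat F(\xi,\tau)|}{|\tau+\xi^2|}\,d\xi\big\|_{L^2_\tau}$; the extra derivative shifts the frequency budget by one power, moving the threshold between the $X^{s,-b}$ and the $X^{1/2,\cdot}$ regimes from $s=\tfrac12$ to $s=\tfrac32$ and reproducing the stated ranges. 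I expect the main obstacle to be exactly this last term with the free oscillation $e^{it\tau}$: unlike every other piece it is not a Schr\"odinger evolution, so Lemma~\ref{lem:kato} cannot be invoked, and one must convert the temporal weight $\la\tau\ra^{(2s+1)/4}$ into the spatial and modulation weights carried by the $X^{s,b}$ structure. Balancing these against the barely-integrable decay $|\tau+\xi^2|^{-1}$ is the delicate step; it both forces $b<\tfrac12$ and makes the auxiliary $X^{1/2,(2s-1-4b)/4}$ norm unavoidable once $s$ exceeds the threshold.
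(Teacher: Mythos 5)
Your proposal is correct and takes essentially the same route as the paper's proof (which writes out the derivative part for $\tfrac12\le s<1$ and cites \cite{ETNLS} for the remaining cases): the same Duhamel kernel $\frac{e^{it\tau}-e^{-it\xi^2}}{i(\tau+\xi^2)}$, the same resonant/non-resonant splitting with Taylor expansion on the resonant set, and Cauchy--Schwarz kernel bounds (uniform in $\tau$, requiring $b<\tfrac12$) for the piece carrying the free oscillation $e^{it\tau}$. The only organizational differences are that you route the pieces proportional to $e^{-it\xi^2}$ through Lemma~\ref{lem:kato} instead of estimating them directly, and your modulation-versus-frequency dichotomy makes explicit why the auxiliary $X^{\frac12,\frac{2s-1-4b}{4}}$ norm becomes unavoidable past the thresholds $s=\tfrac12$ and $s=\tfrac32$.
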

\begin{proof}  For the first part see \cite{ETNLS}.  The second part follows from the first for $s\geq 1$ since $\partial_x$
commutes with $W_\R$. For $\frac12\leq s <1$,  the proof is based on an argument from \cite{collianderkenig}, also see \cite{ETNLS}. 

It suffices to prove the bound above for $\eta D_0\big(\int_0^tW_\R(t- t^\prime) \partial_x F  dt^\prime \big)$ since $X^{s,b}$ norm is independent of space translation. The continuity in $x$ follows from this by the dominated convergence theorem as in the proof of Lemma~\ref{lem:kato}.  Note that
$$
D_0\Big(\int_0^tW_\R(t- t^\prime) \partial_x F  dt^\prime \Big)= i\int_\R\int_0^t e^{-i(t-t^\prime)\xi^2}\xi F(\widehat\xi,t^\prime) dt^\prime d\xi.
$$
Using
$$
F(\widehat\xi,t^\prime)=\int_\R e^{it^\prime\lambda}\widehat F(\xi,\lambda) d\lambda \,\,\,\,\text{ and }\,\,\,
\int_0^t e^{it^\prime(\xi^2+\lambda)}dt^\prime = \frac{e^{it(\xi^2+\lambda)}-1}{i(\lambda+\xi^2)}
$$
we obtain
$$
D_0\Big(\int_0^tW_\R(t- t^\prime) \partial_x F  dt^\prime \Big) = i\int_{\R^2} \frac{e^{it \lambda }-e^{-it\xi^2}}{i(\lambda+\xi^2)} \xi \widehat F(\xi,\lambda) d\xi d\lambda.
$$
Let $\psi$ be a smooth cutoff for $[-1,1]$, and let $\psi^c=1-\psi$. We write
\begin{multline*}
 \eta(t) D_0\Big(\int_0^tW_\R(t- t^\prime)  \partial_x F  dt^\prime \Big)=   \eta(t)  \int_{\R^2} \frac{e^{it \lambda }-e^{-it\xi^2}}{i(\lambda+\xi^2)} \psi(\lambda+\xi^2) \xi \widehat F(\xi,\lambda) d\xi d\lambda \\ +\eta(t) \int_{\R^2} \frac{e^{it \lambda } }{i(\lambda+\xi^2)} \psi^c(\lambda+\xi^2) \xi \widehat F(\xi,\lambda) d\xi d\lambda
-\eta(t) \int_{\R^2} \frac{ e^{-it\xi^2}}{i(\lambda+\xi^2)} \psi^c(\lambda+\xi^2) \xi \widehat F(\xi,\lambda) d\xi d\lambda \\ =:I+II+III.
\end{multline*}
By Taylor expansion, we write
$$
  \frac{e^{it \lambda }-e^{-it\xi^2}}{i(\lambda+\xi^2)} =ie^{it\lambda} \sum_{k=1}^\infty \frac{(-it)^k}{k!} (\lambda+\xi^2)^{k-1},
$$
which leads to
\begin{multline*}
\|I\|_{H^{\frac{2s-1}{4}}(\R)}\les  \sum_{k=1}^\infty  \frac{\| \eta(t)t^k\|_{H^1}}{k!}  \Big\|  \int_{\R^2}  e^{it\lambda} (\lambda+\xi^2)^{k-1}  \psi(\lambda+\xi^2) \xi  \widehat F(\xi,\lambda) d\xi d\lambda\Big\|_{H_t^{\frac{2s-1}{4}}(\R)}\\
\les  \sum_{k=1}^\infty  \frac{1}{(k-1)!}  \Big\| \la \lambda\ra^{\frac{2s-1}{4}} \int_{\R }    (\lambda+\xi^2)^{k-1}  \psi(\lambda+\xi^2) \xi \widehat F(\xi,\lambda) d\xi  \Big\|_{L^2_\lambda}\\
\les  \Big\| \la \lambda\ra^{\frac{2s-1}{4}} \int_{\R }       \psi(\lambda+\xi^2) |\xi| |\widehat F(\xi,\lambda)| d\xi  \Big\|_{L^2_\lambda}.
\end{multline*}
By the Cauchy-Schwarz inequality in $\xi$ we estimate this by
\begin{multline*}
\Big[  \int_{\R }  \la \lambda\ra^{\frac{2s-1}{2}} \Big(  \int_{|\lambda+\xi^2|<1} \la \xi\ra^{2-2s} d\xi\Big)\Big(  \int_{|\lambda+\xi^2|<1}  \la \xi\ra^{2s} |\widehat F(\xi,\lambda)|^2 d\xi\Big) d\lambda \Big]^{1/2}\\
\les \|F\|_{X^{s,-b}} \sup_\lambda   \Big( \la \lambda\ra^{\frac{2s-1}{2}} \int_{|\lambda+\xi^2|<1} \la \xi\ra^{2-2s} d\xi\Big)^{1/2}\les \|F\|_{X^{s,-b}} .
\end{multline*}
 The last inequality follows by a calculation substituting $\rho =\xi^2$.

For the second term  we have
\begin{multline*}
\|II\|_{H^{\frac{2s-1}{4}}(\R)}\les \|\eta\|_{H^1} \Big\| \la \lambda\ra^{\frac{2s-1}{4}}\int_{\R } \frac{1}{ \lambda+\xi^2 } \psi^c(\lambda+\xi^2) \xi \widehat F(\xi,\lambda) d\xi  \Big\|_{L^2_\lambda}\\
\les  \Big\| \la \lambda\ra^{\frac{2s-1}{4}}\int_{\R } \frac{1}{ \la \lambda+\xi^2 \ra } | \xi \widehat F(\xi,\lambda)| d\xi  \Big\|_{L^2_\lambda}.
\end{multline*}
By the Cauchy-Schwarz inequality in $\xi$  we estimate this by
\begin{multline*}
\Big[  \int_{\R }  \la \lambda\ra^{\frac{2s-1}{2}} \Big(  \int \frac{\la \xi\ra^{2-2s}}{\la \lambda+\xi^2 \ra^{2-2b} } d\xi\Big)\Big(  \int \frac{ \la \xi\ra^{2s}}{\la \lambda+\xi^2 \ra^{2b} } |\widehat F(\xi,\lambda)|^2 d\xi\Big) d\lambda \Big]^{1/2}\\
\les \|F\|_{X^{s,-b}} \sup_\lambda   \Big( \la \lambda\ra^{\frac{2s-1}{2}}  \int \frac{\la \xi\ra^{2-2s}}{\la \lambda+\xi^2 \ra^{2-2b} } d\xi\Big)^{1/2} 
\les \|F\|_{X^{s,-b}} .
\end{multline*}
To obtain the last inequality recall that $\frac12 \leq s \leq 1$ and  $b<\frac12$,  and  consider the cases $|\xi|<1$ and $|\xi|\geq 1$ separately. In the former case use $\la \lambda+\xi^2 \ra\approx \la\lambda\ra$  and in the latter case use Lemma~\ref{lem:sums} after the change of variable $\rho=\xi^2$.

To estimate $\|III\|_{H^{\frac{2s-1}{4}}(\R)}$ we break the $\xi$ integral into two pieces $|\xi|\geq 1$ and $|\xi|<1$. We estimate the contribution of the former piece as above (after the change of variable $\rho=\xi^2$):
$$
   \Big\| \la \rho\ra^{\frac{2s-1}{4}}\int_{\R } \frac{1}{ \lambda+\rho } \psi^c(\lambda+\rho) \widehat F(\sqrt{\rho},\lambda) d\lambda  \Big\|_{L^2_{|\rho|\geq 1}}
\les  \Big\| \la \rho\ra^{\frac{2s-1}{4}}\int_{\R } \frac{1}{ \la \lambda+\rho \ra } | \widehat F(\sqrt{\rho},\lambda)| \ d\lambda   \Big\|_{L^2_{|\rho|\geq 1}}.
$$
By the Cauchy-Schwarz inequality in $\lambda$ integral  noting that $b<\frac12$  we bound this by
$$
   \Big[\int_{|\rho|>1}\int_{\R } \frac{ \la \rho\ra^{\frac{2s-1}{2}}}{ \la \lambda+\rho \ra^{2b} } | \widehat F(\sqrt{\rho},\lambda)|^2   d\lambda  d\rho  \Big]^{1/2}\les \|F\|_{X^{s,-b}}.
$$
We estimate the contribution of the latter term by
$$
 \int_{\R^2} \frac{ \|\eta(t)  e^{-it\xi^2}\|_{H^{\frac{2s-1}{4}}}\chi_{[-1,1]}(\xi)}{|\lambda+\xi^2|} \psi^c(\lambda+\xi^2)  |\xi \widehat F(\xi,\lambda)| d\xi d\lambda \les \int_{\R^2} \frac{   \chi_{[-1,1]}(\xi)}{\la\lambda+\xi^2\ra}    |\widehat F(\xi,\lambda)| d\xi d\lambda.
$$
For $b<\frac12$  this is bounded by $\|F\|_{X^{0,-b}}$ by the Cauchy-Schwarz inequality in $\xi$ and $\lambda$ integrals. 
\end{proof}

\subsection{Estimates for the nonlinear terms}
In this section we establish estimates for the nonlinear term  in \eqref{eq:duhamel} in order to close the fixed point argument and to obtain the smoothing theorem. 

We start by recording a priori estimates that follow  from \eqref{eq:xs4} and well known 
dispersive estimates for the linear Schr{\oo}dinger evolution, for details see  e.g.  \cite{kpv,ht1,ht}:
\be\label{ks}
\Big\|\Big(\frac{\la \xi\ra^{\frac12} f(\xi,\tau) }{\la\tau+\xi^2\ra^{\frac12+}}\Big)^\vee \Big\|_{L^\infty_xL^2_t}\les \|f\|_{L^2_{\xi,\tau}}\,\,\,\text{ (Kato smoothing inequality)}, 
\ee 
\be\label{mf}
\Big\|\Big(\frac{ f(\xi,\tau) }{\la \xi\ra^{\frac14}\la\tau+\xi^2\ra^{\frac12+}}\Big)^\vee \Big\|_{L^4_xL^\infty_t}\les \|f\|_{L^2_{\xi,\tau}}\,\,\,\text{ (Maximal function inequality)}.
\ee
Interpolating both \eqref{ks} and \eqref{mf} with   Plancherel identity we obtain
\be\label{ks-}
\Big\|\Big(\frac{\la \xi\ra^{\frac12-} f(\xi,\tau) }{\la\tau+\xi^2\ra^{\frac12-}}\Big)^\vee \Big\|_{L^{\infty-}_xL^2_t}\les \|f\|_{L^2_{\xi,\tau}},
\ee
\be\label{mf-}
\Big\|\Big(\frac{ f(\xi,\tau) }{\la \xi\ra^{\frac14-}\la\tau+\xi^2\ra^{\frac12-}}\Big)^\vee \Big\|_{L^{4-}_xL^{\infty-}_t}\les \|f\|_{L^2_{\xi,\tau}}.
\ee 
Sobolev embedding implies that 
\be\label{se2}
\Big\|\Big(\frac{ f(\xi,\tau) }{\la \xi\ra^{\frac12-\frac1p+}\la\tau+\xi^2\ra^{\frac12-\frac1p+}}\Big)^\vee \Big\|_{L^p_xL^p_t}\les \|f\|_{L^2_{\xi,\tau}},\,\,\,2\leq p<\infty.
\ee
We also have the Strichartz estimate 
\be\label{str}
\Big\|\Big(\frac{ f(\xi,\tau) }{ \la\tau+\xi^2\ra^{\frac12+}}\Big)^\vee \Big\|_{L^6_xL^6_t}\les \|f\|_{L^2_{\xi,\tau}}.
\ee
Interpolating \eqref{str} with Plancherel identity, we have
\be\label{str-}
\Big\|\Big(\frac{ f(\xi,\tau) }{ \la\tau+\xi^2\ra^{\frac34-\frac3{2p}+}}\Big)^\vee \Big\|_{L^{p}_xL^{p}_t}\les \|f\|_{L^2_{\xi,\tau}},\,\,\,\,\,\,2<p<6.
\ee
Interpolating \eqref{str} with \eqref{se2}, we have
\be\label{str+}
\Big\|\Big(\frac{ f(\xi,\tau) }{ \la \xi\ra^{\frac{p-6}{2p}+}\la\tau+\xi^2\ra^{\frac12-}}\Big)^\vee \Big\|_{L^{p}_xL^{p}_t}\les \|f\|_{L^2_{\xi,\tau}},\,\,\,\,6<p<\infty.
\ee
Using \eqref{str-}, \eqref{str+}, and  H{\oo}lder's inequality, we  have
\be\label{str+-}
\Big\|\Big(\frac{ f(\xi,\tau) }{ \la\tau+\xi^2\ra^{c+}}\Big)^\vee \Big[ \Big(\frac{ f(\xi,\tau) }{ \la \xi\ra^{\frac12-c+}\la\tau+\xi^2\ra^{\frac12-}}\Big)^\vee\Big]^2 \Big\|_{L^{2+}_{x,t }}\les \|f\|_{L^2_{\xi,\tau}}^3,\,\,\,\,\,0\leq c\leq\frac12.
\ee
Similarly, using \eqref{ks-} and \eqref{mf-}, and H{\oo}lder's inequality, we have
\be\label{ksmf-}
\Big\|\Big(\frac{\la \xi\ra^{\frac12-} f(\xi,\tau) }{\la\tau+\xi^2\ra^{\frac12-}}\Big)^\vee \Big[ \Big(\frac{ f(\xi,\tau) }{\la \xi\ra^{\frac14-}\la\tau+\xi^2\ra^{\frac12-}}\Big)^\vee\Big]^2 \Big\|_{L^{2-}_{x,t} }\les \|f\|_{L^2_{\xi,\tau}}^3.
\ee

We start with smoothing estimates for the quintic term in \eqref{eq:duhamel}:
\begin{proposition}\label{prop:smooth} For fixed $s>0$ and $a<\min(4s,\frac{1}{2})$, there exists $\epsilon>0$ such that for $\frac12-\epsilon<b <\frac12$, we have
$$\big\||u|^4u\big\|_{X^{s+a,-b }}\les \|u\|_{X^{s,b }}^5.$$ 
\end{proposition}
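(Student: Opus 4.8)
The plan is to argue by duality and reduce the claimed bound to a six-linear estimate on the Fourier side, which I then control by H\"older's inequality together with the trilinear smoothing/Strichartz estimates \eqref{ksmf-} and \eqref{str+-}.

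\emph{Duality reduction.} Since the dual of $X^{s+a,-b}$ is $X^{-(s+a),b}$, it suffices to bound $|\la |u|^4u,w\ra|$ for $\|w\|_{X^{-(s+a),b}}\le 1$. Writing $|u|^4u=u^3\overline{u}^2$ and expanding the space--time Fourier transform of the product as a convolution, I set $f_j(\xi,\tau)=\la\xi\ra^{s}\la\tau+\xi^2\ra^{b}|\widehat u(\xi,\tau)|$ for $j=1,\dots,5$ and $f_0(\xi,\tau)=\la\xi\ra^{-(s+a)}\la\tau+\xi^2\ra^{b}|\widehat w(\xi,\tau)|$, so that $\|f_j\|_{L^2}=\|u\|_{X^{s,b}}$ and $\|f_0\|_{L^2}=\|w\|_{X^{-(s+a),b}}\le 1$. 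The estimate then reduces to
\be
\int_{\Gamma} \frac{\la\xi_0\ra^{s+a}}{\prod_{j=1}^5\la\xi_j\ra^{s}}\,\prod_{j=0}^5\frac{f_j(\xi_j,\tau_j)}{\la\tau_j+\xi_j^2\ra^{b}}\,\les \prod_{j=0}^5\|f_j\|_{L^2},
\ee
where $\Gamma=\{\xi_0=\sum_{j=1}^5\pm\xi_j,\ \tau_0=\sum_{j=1}^5\pm\tau_j\}$ and the signs record the two conjugated factors (they are irrelevant once we pass to absolute values). By Parseval the left-hand side is a physical-space integral $\int_{\R^2}\prod_{j=0}^5 G_j\,dx\,dt$, where each $G_j$ is the inverse transform of the corresponding weighted factor.

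\emph{Symbol reduction and H\"older splitting.} Since $\xi_0=\sum\pm\xi_j$ we have $\la\xi_0\ra\les\max_j\la\xi_j\ra$, so after relabelling I may assume $\la\xi_1\ra=\max_{1\le j\le5}\la\xi_j\ra$ and bound the symbol by $\la\xi_0\ra^{s+a}\prod_{j=1}^5\la\xi_j\ra^{-s}\les \la\xi_1\ra^{a}\prod_{j=2}^5\la\xi_j\ra^{-s}$. I then split $\{0,1,\dots,5\}$ into two triples, estimate the product $\prod G_j$ over one triple in $L^{2-}_{x,t}$ by \eqref{ksmf-} and over the other in $L^{2+}_{x,t}$ by \eqref{str+-}, and combine by H\"older in $(x,t)$, the exponents $2\mp$ being conjugate. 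The dual factor $f_0$ carries no spatial weight and the modulation gain $\la\tau_0+\xi_0^2\ra^{-b}$ with $b=\tfrac12-$, so it is placed in the $\la\tau+\xi^2\ra^{-(c+)}$ slot of \eqref{str+-}, which requires no spatial decay.

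\emph{Distributing the derivative loss.} The crux is the single positive power $\la\xi_1\ra^{a}$ attached to the highest frequency, and the two constraints arise from two extreme configurations. When no other frequency is comparable to $\xi_1$, the remaining factors sit at bounded frequency, so their maximal-function slots cost nothing; placing $G_1$ in the Kato slot of \eqref{ksmf-}, which absorbs up to $\la\xi\ra^{\frac12-}$, handles $\la\xi_1\ra^{a}$ precisely when $a<\tfrac12$. When several frequencies are comparable to $\xi_1$, one instead writes $\la\xi_1\ra^{a}\approx\prod\la\xi_j\ra^{a_j}$ over the comparable indices (with $\sum a_j=a$, say $a_j=a/4$ when all four are comparable) to transfer the loss onto the lower-index factors; each then carries a net weight $\la\xi_j\ra^{a_j-s}$ and is placed in a Strichartz slot of \eqref{str+-}, whose spatial requirement $\la\xi\ra^{-(\frac12-c)}$ is met for an admissible $c\in[0,\tfrac12]$ exactly when $a_j\le s$ for all $j$, i.e.\ when $a<4s$. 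Here both $G_0$ and $G_1$ are weight-free and occupy the two modulation-only slots; the residual modulation exponents needed for these slots are available once $\epsilon>0$ (hence $b=\tfrac12-\epsilon$) is chosen small enough. A general frequency configuration interpolates between these two cases, so $a<\min(4s,\tfrac12)$ suffices.

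I expect the main obstacle to be exactly this last balancing in the regime $s<\tfrac14$. There the lower-frequency factors do not individually possess the $\tfrac14$ derivative of decay demanded by the maximal-function inequality \eqref{mf-}, so one cannot route $G_1$ through the Kato slot while pairing it with comparable frequencies; the loss $\la\xi_1\ra^{a}$ must instead be shared only among the factors genuinely comparable to $\xi_1$, and it is the bookkeeping of this redistribution against the four available factors $\la\xi_j\ra^{-s}$, together with the requirement $c\le\tfrac12$ in \eqref{str+-}, that forces the sharp threshold $a<4s$.
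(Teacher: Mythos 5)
Your duality reduction, the choice of the conjugate trilinear tools \eqref{ksmf-} and \eqref{str+-}, and the H\"older pairing $L^{2-}_{x,t}\times L^{2+}_{x,t}$ are exactly the paper's setup, and your ``Case A'' is essentially the paper's slot assignment. The gap is in ``Case B'', which is precisely the regime that produces the threshold $a<4s$. Once you commit to estimating one triple by \eqref{ksmf-} and the other by \eqref{str+-} --- and you must pair them this way, since two applications of \eqref{str+-} produce two $L^{2+}$ bounds whose exponents satisfy $\frac1{2+}+\frac1{2+}<1$ and hence cannot be combined by H\"older into a bound for the integral of the product over $\R^2$ --- the six available slots are fixed: from \eqref{str+-}, one modulation-only slot and two weighted slots each paying $\la\xi\ra^{\frac12-c+}$; from \eqref{ksmf-}, one Kato slot \emph{gaining} $\la\xi\ra^{\frac12-}$ and two maximal-function slots each paying $\la\xi\ra^{\frac14-}$. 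There are not two modulation-only slots, and there are not four Strichartz slots, so your Case B assignment ($G_0,G_1$ weight-free, all of $G_2,\dots,G_5$ in weighted slots with cost $a_j\le s$) cannot be realized. In any feasible assignment two of the four lower factors must absorb the $\la\xi\ra^{\frac14-}$ maximal losses, and it is the Kato gain on $G_1$ --- which in your scheme is reserved only for the $a<\frac12$ constraint --- that offsets them: the correct count in the all-comparable case is $a-\frac12+(\frac14-s)+(\frac14-s)-s-s=a-4s$, so your arithmetic lands on the right number only by accident, through a slot structure that does not exist.

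Moreover, which of $G_2,\dots,G_5$ are sent to the maximal slots matters, and ``a general frequency configuration interpolates between these two cases'' is not an argument. For instance, take $s\in(\frac1{12},\frac18)$, $a$ close to $4s$, $|\xi_2|\approx|\xi_1|$ large and $|\xi_3|,|\xi_4|,|\xi_5|=O(1)$: if $G_2$ sits in a maximal slot the relevant exponent is $a-\frac14-s$, which is positive, and the supremum diverges. The paper's remedy is to order the frequencies $|\xi_1|\geq|\xi_2|\geq\cdots\geq|\xi_5|$ (so that $|\xi_0|\les|\xi_1|$) and use one fixed, ordering-adapted assignment: $G_1$ in the Kato slot, the two \emph{smallest} factors $G_4,G_5$ in the maximal slots, $G_2,G_3$ in the weighted slots with $c=\frac12-$, and the dual function in the modulation slot. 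Everything then reduces to the finiteness of the single supremum of
$\la\xi_1\ra^{a-\frac12+}\la\xi_2\ra^{-s+}\la\xi_3\ra^{-s+}\la\xi_4\ra^{\frac14-s-}\la\xi_5\ra^{\frac14-s-}$,
which holds for every configuration exactly when $a<\min(4s,\frac12)$. Your proposal has the right ingredients, but this ordering-based bookkeeping --- not an interpolation between extreme cases --- is the actual content of the proof, and without it the argument does not close.
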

 \begin{proof}
 
 By writing the Fourier transform of $|u|^4u =u\bar{u}u\bar{u}u$ as a convolution, we obtain
\[ \widehat{|u|^4u}(\xi_0,\tau_0) = \int\limits_{{\xi_0 - \xi_1 + \xi_2-\xi_3+\xi_4-\xi_5=0  }\atop {\tau_0 - \tau_1+\tau_2-\tau_3+\tau_4-\tau_5 =0}}   \widehat{u}(\xi_1,\tau_1)\overline{\widehat{u}(\xi_2,\tau_2)} \widehat{u}(\xi_3,\tau_3)\overline{\widehat{u}(\xi_4,\tau_4)} \widehat{u}(\xi_5, \tau_5). \]
We define
\[ f(\xi,\tau) = |\widehat{u}(\xi,\tau)|\langle \xi \rangle ^s \langle \tau+ \xi^2 \rangle^{b }. \]
By duality, it suffices to prove
$$
I:=\int\limits_{{\xi_0 - \xi_1 + \xi_2-\xi_3+\xi_4-\xi_5=0  }\atop {\tau_0 - \tau_1+\tau_2-\tau_3+\tau_4-\tau_5 =0}}  \frac{\la\xi_0\ra^{s+a} g(\xi_0,\tau_0) \prod_{j=1}^5 f(\xi_j,\tau_j) }{ \prod_{j=1}^5 \la \xi_j\ra^s \prod_{j=0}^5\la\tau_j+\xi_j^2\ra^b }\les \|f\|_{L^2_{\xi,\tau}}^5\|g\|_{L^2_{\xi,\tau}}.
$$
By symmetry, we can restrict ourselves to the case $|\xi_1|\geq|\xi_2|\geq|\xi_3|\geq|\xi_4|\geq|\xi_5|$, which implies $|\xi_1|\gtrsim|\xi_0|$. We write
\begin{multline*}
I  \les \sup\, \frac{ \la\xi_2\ra^{0+}\la \xi_3\ra^{0+} \la\xi_4\ra^{\frac14-}\la\xi_5\ra^{\frac14-} }{\la\xi_1\ra^{ \frac12-}}  \frac{\la\xi_0\ra^{s+a} }{ \prod_{j=1}^5 \la \xi_j\ra^s  } \\
 \times \int\limits_{{\xi_0 - \xi_1 + \xi_2-\xi_3+\xi_4-\xi_5=0  }\atop {\tau_0 - \tau_1+\tau_2-\tau_3+\tau_4-\tau_5 =0}} \left(\frac{g (\xi_0,\tau_0)   f (\xi_2,\tau_2) f (\xi_3,\tau_3)   }{ \la\xi_2\ra^{0+}\la \xi_3\ra^{0+} \la\tau_0+\xi_0^2\ra^b\la\tau_2+\xi_2^2\ra^b  \la\tau_3+\xi_3^2\ra^b  } \right.\\
\left. \times \frac{ \la\xi_1\ra^{ \frac12-}f (\xi_1,\tau_1) f (\xi_4,\tau_4) f (\xi_5,\tau_5)  }{  \la\xi_4\ra^{\frac14-}\la\xi_5\ra^{\frac14-} \la\tau_1+\xi_1^2\ra^b\la\tau_4+\xi_4^2\ra^b  \la\tau_5+\xi_5^2\ra^b  }\right).
\end{multline*}
Note that for $s>0$, the supremum is finite provided that   $a<\min(1/2,4s) $. Therefore, using \eqref{str+-} with $c=\frac12-$ and \eqref{ksmf-}, as well as the Plancherel identity and the convolution structure,  we get  $ I\les  \|f\|_{L^2_{\xi,\tau}}^5\|g\|_{L^2_{\xi,\tau}}.$ 
 \end{proof} 
We also need the following smoothing bound for the terms arising in Proposition~\ref{prop:duhamelkato}: 
 \begin{proposition}\label{prop:smooth2} For fixed $0< s <\frac52$, and  $\frac12-s<a<\min(4s,\frac{1}{2}, \frac52-s)$,   there exists $\epsilon>0$ such that for $\frac12-\epsilon<b <\frac12$, we have
\begin{align*}  \big\||u|^4u\big\|_{X^{\frac12,\frac{2s+2a-1-4b}{4}}} \les \|u\|_{X^{s,b}}^5. 
\end{align*} 
\end{proposition}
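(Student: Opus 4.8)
The plan is to prove this by duality, estimating the relevant multilinear integral in Fourier variables. Writing the Fourier transform of $|u|^4u$ as a convolution and setting $f(\xi,\tau)=|\widehat u(\xi,\tau)|\la\xi\ra^s\la\tau+\xi^2\ra^b$ exactly as in Proposition~\ref{prop:smooth}, the desired bound reduces to showing
$$
I:=\int\limits_{{\xi_0-\xi_1+\xi_2-\xi_3+\xi_4-\xi_5=0}\atop{\tau_0-\tau_1+\tau_2-\tau_3+\tau_4-\tau_5=0}} \frac{\la\xi_0\ra^{\frac12} g(\xi_0,\tau_0)\prod_{j=1}^5 f(\xi_j,\tau_j)}{\la\tau_0+\xi_0^2\ra^{\frac{1-2s-2a+4b}{4}}\prod_{j=1}^5\la\xi_j\ra^s\prod_{j=1}^5\la\tau_j+\xi_j^2\ra^b}\les \|f\|_{L^2}^5\|g\|_{L^2}.
$$
The essential difference from Proposition~\ref{prop:smooth} is that here the output regularity is fixed at $\frac12$ while the $\tau_0+\xi_0^2$ weight is weaker (the exponent $\frac{1-2s-2a+4b}{4}$ rather than $b$), so the $g$-factor must be paired with a space-time estimate of a different flavor than before. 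The first step is to reduce by symmetry to $|\xi_1|\geq|\xi_2|\geq\cdots\geq|\xi_5|$, which as before yields $|\xi_1|\gtrsim|\xi_0|$.

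Next I would extract a supremum of frequency weights, distributing $\la\xi_0\ra^{\frac12}$ and the factors $\la\xi_j\ra^{-s}$ so that after the estimate the remaining integral has the structure required by the space-time inequalities recorded above. Because $|\xi_0|\les|\xi_1|$, I can absorb $\la\xi_0\ra^{\frac12}$ into the highest frequency $\la\xi_1\ra$, and the roles of $\la\xi_2\ra,\dots,\la\xi_5\ra$ are arranged so that three factors carry the $L^{\infty}_xL^2_t$/maximal-function type weights from \eqref{ks-} and \eqref{mf-} and the remaining factors carry the Strichartz-type weights from \eqref{str-}--\eqref{str+}. The key point is to verify that the extracted supremum over frequencies is finite precisely in the stated range $\frac12-s<a<\min(4s,\frac12,\frac52-s)$: the lower bound $a>\frac12-s$ is what makes pairing $g$ (at regularity $\frac12$) against $f$ (at regularity $s$) summable, the constraint $a<\frac52-s$ controls the loss coming from the weakened $\la\tau_0+\xi_0^2\ra$ weight, and $a<\min(4s,\frac12)$ is inherited from the same combinatorics as in Proposition~\ref{prop:smooth}.

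The main obstacle I expect is bookkeeping the weight exponent $\frac{1-2s-2a+4b}{4}$ on $\la\tau_0+\xi_0^2\ra$ correctly: unlike the clean exponent $b$ in Proposition~\ref{prop:smooth}, here the $g$-factor comes with a modulation weight that depends on $s$, $a$, and $b$ simultaneously, so I must confirm that after applying H\"older's inequality the $g$-factor can be controlled in $L^2$ by one of the interpolated estimates \eqref{str+-} or \eqref{ksmf-} with the appropriate choice of the free parameter $c$. Concretely, the $g$-factor should be assigned a $\la\tau_0+\xi_0^2\ra^{-(c+)}$ weight with $c=\frac{1-2s-2a+4b}{4}$, and I need $0\leq c\leq\frac12$ to invoke \eqref{str+-}; checking that this lies in the admissible range for $b$ close to $\frac12$ and $a$ in the stated interval is the one place requiring genuine care. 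Once the finiteness of the frequency supremum and the admissibility of $c$ are both verified, the estimate closes by applying \eqref{str+-} (or \eqref{ksmf-}) together with the Plancherel identity and the convolution structure exactly as in the proof of Proposition~\ref{prop:smooth}, yielding $I\les\|f\|_{L^2}^5\|g\|_{L^2}$.
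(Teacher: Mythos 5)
Your setup (the duality reduction, the definition of $f$, and the symmetry reduction to $|\xi_1|\geq\cdots\geq|\xi_5|$, $|\xi_1|\gtrsim|\xi_0|$) matches the paper, and in the range $\frac12<s+a<\frac32$ your plan is essentially the paper's argument: there the output modulation weight carries the exponent $\frac{2s+2a-1-4b}{4}=-c$ with $c=\frac{3-2s-2a}{4}-\in(0,\frac12)$ for $b$ close to $\frac12$, so \eqref{str+-} applies to the $g$-factor paired with two of the $f$-factors, \eqref{ksmf-} to the remaining three, and the frequency supremum is finite for $a<\min(4s,\frac12)$. One detail you gloss over: the paper splits this range further at $s+a=1$ (i.e. $c=\frac14$), because whether the \eqref{str+-} weights $\la\xi\ra^{\frac12-c+}$ should sit on the larger frequencies $\xi_2,\xi_3$ or on the smaller ones $\xi_4,\xi_5$ depends on whether $\frac12-c$ is below or above $\frac14$; but this is a bookkeeping point, not a conceptual one.

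The genuine gap is the range $\frac32\leq s+a<\frac52$, which is nonempty under the stated hypotheses whenever $s>1$ (e.g. $s=2$, $a=\frac25$ satisfies $\frac12-s<a<\min(4s,\frac12,\frac52-s)$). There $2s+2a-1-4b\geq 2-4b>0$ for every $b<\frac12$, so your $c=\frac{1-2s-2a+4b}{4}$ is strictly negative: the output modulation weight is a \emph{loss}, not a gain, and \eqref{str+-} is inapplicable. The admissibility check you flagged as ``the one place requiring genuine care'' actually fails in this regime, and no choice of $b$ close to $\frac12$ repairs it; your proposal offers no alternative mechanism. The paper treats this case separately: since $a<\frac12$ forces $s>1$ here, one uses $\la\tau_0+\xi_0^2\ra\les\la\xi_{max}\ra^2\max_{j}\la\tau_j+\xi_j^2\ra$ to convert the positive modulation power into frequency factors plus a modulation factor absorbed by one of the $\la\tau_j+\xi_j^2\ra^{b}$ weights, and then closes by Cauchy--Schwarz and integration in the $\tau$ variables; the resulting frequency supremum is controlled by $\sup\,\la\xi_0\ra\la\xi_{max}\ra^{2a-1-4b}$, which is finite precisely because $a\leq 2b$ (true for $b$ near $\frac12$ since $a<\frac12$). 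Without this case, or some replacement for it, your argument does not prove the proposition in its stated generality.
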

\begin{proof}  As in Proposition~\ref{prop:smooth}, by duality it suffices to prove that
$$
I:=\int\limits_{{\xi_0 - \xi_1 + \xi_2-\xi_3+\xi_4-\xi_5=0  }\atop {\tau_0 - \tau_1+\tau_2-\tau_3+\tau_4-\tau_5 =0}}  \frac{\la\xi_0\ra^{\frac12}  \la \tau_0+\xi_0^2\ra^{\frac{2s+2a-1-4b  }{4}}g(\xi_0,\tau_0) \prod_{j=1}^5 f(\xi_j,\tau_j) }{ \prod_{j=1}^5 \la \xi_j\ra^s \la\tau_j+\xi_j^2\ra^b }\les \|f\|_{L^2_{\xi,\tau}}^5\|g\|_{L^2_{\xi,\tau}}.
$$ 
We proceed by considering three cases. \\
Case 1. $\frac32\leq s+a<\frac52$. Since $ a< \frac12$, we have $s>1$.
Note that 
$$
 \la \tau_0+\xi_0^2\ra \les \la \xi_{max}\ra^2 \max_{j=1,..,5}\la \tau_j+\xi_j^2\ra.
$$
Without loss of generality, let $\max_{j=1,..,5}\la \tau_j+\xi_j^2\ra= \la \tau_5+\xi_5^2\ra$. We have
 $$ \frac{\la \tau_0+\xi_0^2\ra^{\frac{2s+2a-1-4b }{4}} }{ \prod_{j=1}^5   \la\tau_j+\xi_j^2\ra^b }\les \frac{ \la \xi_{max}\ra^{s+a-\frac12-2b}}{ \prod_{j=1}^4   \la\tau_j+\xi_j^2\ra^{\frac12+} }.
 $$
Using this, the Cauchy-Schwarz inequality, and integrating in the $\tau$ variables, we bound $I$ by the square root of 
$$
\|f\|_{L^2_{\xi,\tau}}^{10}\|g\|^2_{L^2_{\xi,\tau}} \sup_{\xi_0}\int\limits_{ \xi_0 - \xi_1 + \xi_2-\xi_3+\xi_4-\xi_5=0   }  \frac{\la\xi_0\ra \la \xi_{max}\ra^{2s+2a-1-4b}   }{ \prod_{j=1}^5 \la \xi_j\ra^{2s} }. 
$$ 
The supremum above is bounded by $ \sup \la \xi_0\ra \la \xi_{max}\ra^{ 2a-1-4b}$, which is finite provided that 
$a\leq 2b$. This completes Case 1.

We now consider the remaining case    $\frac12< s+a<\frac32$. 
By symmetry, we can restrict ourselves to the case $|\xi_1|\geq|\xi_2|\geq|\xi_3|\geq|\xi_4|\geq|\xi_5|$, which implies $|\xi_1|\gtrsim|\xi_0|$. We will consider the cases $s+a<1$ and $s+a\geq1$ separately. In both cases we use the simple observation that $a_1\geq a_2\geq \cdots\geq a_n\geq 1$ and $b_1\geq b_2\geq \cdots\geq b_n $ imply 
$\prod_{j=1}^n a_j^{-b_j} \leq 1$, provided that $\sum_{j=1}^n b_j \geq 0.$\\
Case 2. $\frac12< s+a<1$.  
Using \eqref{ksmf-} for $f(\xi_1,\tau_1)$,   $f(\xi_4,\tau_4)$, and $f(\xi_5,\tau_5)$, and using \eqref{str+-}  with $c:=\frac{3-2s-2a}{4}- \in (\frac14,\frac12)$   for $g(\xi_0,\tau_0)$, $f(\xi_2,\tau_2)$, and $f(\xi_3,\tau_3)$ as in the proof of Proposition~\ref{prop:smooth}, it suffices to observe that  
$$
\sup\, \frac{ \la\xi_2\ra^{\frac12-c+} \la\xi_3\ra^{\frac12-c+} \la\xi_4\ra^{\frac14- }\la\xi_5\ra^{\frac14 -} }{\la\xi_1\ra^{ \frac12 -}}  \frac{\la\xi_0\ra^{\frac12} }{ \prod_{j=1}^5 \la \xi_j\ra^s  } <\infty,
$$
which holds true  for $a<\min(4s,\frac12) $.\\
Case 3. $1\leq s+a<\frac32$.   Using \eqref{ksmf-} for $f(\xi_1,\tau_1)$,  $f(\xi_2,\tau_2)$, and $f(\xi_3,\tau_3)$, and using  \eqref{str+-} with $c:=\frac{3-2s-2a}{4}- \in (0, \frac14)$ for $g(\xi_0,\tau_0)$, $f(\xi_4,\tau_4)$, and $f(\xi_5,\tau_5)$, we have
$$
\sup\, \frac{  \la\xi_2\ra^{\frac14- }\la\xi_3\ra^{\frac14- }\la\xi_4\ra^{\frac12-c+} \la\xi_5\ra^{\frac12-c+}   }{\la\xi_1\ra^{ \frac12- }}  \frac{\la\xi_0\ra^{\frac12} }{ \prod_{j=1}^5 \la \xi_j\ra^s  } <\infty,
$$
since $a<\min(4s,\frac12) $. 
\end{proof}
 
 We finish this section with analogous smoothing estimates for the derivative nonlinearity  in \eqref{eq:duhamel}. The following proposition is a variant of an estimate from \cite{ht1}:
\begin{proposition}\label{prop:smooth3} For fixed $s>\frac12$ and $a<\min(2s-1,\frac{1}{4})$, there exists $\epsilon>0$ such that for $\frac12-\epsilon<b <\frac12$, we have
$$\big\|u^2\overline{u_x}\big\|_{X^{s+a,-b }}\les \|u\|_{X^{s,b }}^3.$$ 
\end{proposition}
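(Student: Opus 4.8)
The plan is to follow the scheme of Propositions~\ref{prop:smooth} and \ref{prop:smooth2}: pass to the Fourier side and reduce by duality to a trilinear multiplier estimate. Writing $\widehat{u^2\overline{u_x}}(\xi_0,\tau_0)$ as a convolution over $\xi_0=\xi_1+\xi_2-\xi_3$, $\tau_0=\tau_1+\tau_2-\tau_3$, and using $\widehat{u_x}=i\xi\,\widehat u$, the differentiated (and conjugated) factor contributes a weight $|\xi_3|$ to the integrand. Setting $f(\xi,\tau)=|\widehat u(\xi,\tau)|\langle\xi\rangle^s\langle\tau+\xi^2\rangle^b$ and introducing a dual $g\in L^2$, it then suffices to bound
\[
I:=\int\limits_{{\xi_0=\xi_1+\xi_2-\xi_3}\atop{\tau_0=\tau_1+\tau_2-\tau_3}}\frac{\langle\xi_0\rangle^{s+a}\,|\xi_3|\,g(\xi_0,\tau_0)\prod_{j=1}^3 f(\xi_j,\tau_j)}{\prod_{j=1}^3\langle\xi_j\rangle^{s}\prod_{j=0}^3\langle\tau_j+\xi_j^2\rangle^{b}}\les \|f\|_{L^2_{\xi,\tau}}^3\|g\|_{L^2_{\xi,\tau}}.
\]
By the $\xi_1\leftrightarrow\xi_2$ symmetry of the two undifferentiated factors I would assume $|\xi_1|\ge|\xi_2|$, and I would use throughout that $\langle\xi_0\rangle\les\max_j\langle\xi_j\rangle$.

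The central tool is the resonance identity. Since $\tau_0=\tau_1+\tau_2-\tau_3$, a direct computation gives
\[
(\tau_0+\xi_0^2)-(\tau_1+\xi_1^2)-(\tau_2+\xi_2^2)+(\tau_3+\xi_3^2)=2(\xi_1-\xi_3)(\xi_2-\xi_3),
\]
so that $\max_{j=0,\dots,3}\langle\tau_j+\xi_j^2\rangle\gtrsim\langle(\xi_1-\xi_3)(\xi_2-\xi_3)\rangle$. I would split into cases according to the frequency interaction. In the resonant regime, where the derivative frequency $|\xi_3|$ is comparable to the largest frequency and is separated from both $\xi_1$ and $\xi_2$, the identity forces a large modulation of size $\sim|\xi_3|^2$. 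Extracting this from the denominator gains $\langle\xi_3\rangle^{2b}=\langle\xi_3\rangle^{1-}$, which absorbs the derivative loss $|\xi_3|$ up to an $\langle\xi_3\rangle^{0+}$ deficit that is covered by the smoothing budget and the remaining $\langle\xi_j\rangle^{-s}$ weights. After this extraction, the leftover integral is estimated by Plancherel and the convolution structure together with the $L^p$ bounds \eqref{ks-}, \eqref{mf-}, \eqref{str+-}, \eqref{ksmf-}, exactly as in Proposition~\ref{prop:smooth}.

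In the complementary, non-resonant regime — where the frequencies are comparable, or $\xi_3$ lies close to $\xi_1$ or $\xi_2$ — I would discard the resonance and instead bound the multiplier by its supremum, reducing to a H\"older/Strichartz estimate. Here the admissible gain is governed by the requirement that
\[
\sup\ \frac{\langle\xi_0\rangle^{s+a}\,|\xi_3|}{\prod_{j=1}^3\langle\xi_j\rangle^{s}}\ \times\ \big(\text{weights extracted from the chosen }L^p\text{ estimates}\big)<\infty .
\]
A direct scaling count in the all-frequencies-comparable case $\langle\xi_j\rangle\sim N$ gives $N^{s+a+1-3s}=N^{1+a-2s}$, which is the source of the constraint $a<2s-1$; the second constraint $a<\tfrac14$ reflects the maximal weight $\langle\xi\rangle^{1/4}$ carried by the maximal-function estimate \eqref{mf-} into which the differentiated factor must at times be placed.

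I expect the main obstacle to be precisely the resonant case in which the derivative lands on the top frequency. There one must balance the full derivative loss $|\xi_3|$ against the modulation gain, checking that the near-$\tfrac12$ value of $b$ is enough (since $b<\tfrac12$ leaves a genuine $\langle\xi_3\rangle^{0+}$ shortfall) and, more delicately, that after removing the maximal modulation the three surviving weights $\langle\tau_j+\xi_j^2\rangle^{b}$ can be distributed among the four $L^p$ factors so that each retains an admissible power $\langle\tau+\xi^2\rangle^{\frac12-}$. Verifying that this distribution is compatible with the frequency budget dictated by $a<\min(2s-1,\tfrac14)$ is the heart of the argument.
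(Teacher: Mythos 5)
Your skeleton is the same as the paper's: reduce by duality to a quadrilinear form, use the resonance identity (which you compute correctly; in your labeling it reads $2(\xi_1-\xi_3)(\xi_2-\xi_3)$, which equals the paper's $2(\xi_0-\xi_1)(\xi_0-\xi_3)$ after relabeling), feed the factors into the Kato smoothing and maximal function estimates, and finish with a frequency case analysis; you also identify the correct sources of the two constraints $a<2s-1$ and $a<\frac14$. The genuine gap is your prescription for the \emph{non-resonant} regime, namely to ``discard the resonance and bound the multiplier by its supremum.'' That step fails in the range $\frac12<s<\frac34$, which is precisely where the proposition matters most. Concretely, take (in your labeling) $\xi_1\sim N$, $\xi_2=O(1)$, $\xi_3\sim -R$ with $1\ll R\ll N$, so that $\xi_0=\xi_1+\xi_2-\xi_3\sim N$. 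Here the derivative frequency is neither comparable to $\xi_{max}$ nor close to $\xi_1$ or $\xi_2$, so under your dichotomy it receives the non-resonant treatment. The best possible H\"older assignment without any resonance gain (Kato gain \eqref{ks-} on a frequency-$N$ factor, maximal costs \eqref{mf-} on the $\xi_2$- and $\xi_3$-factors, and \eqref{se2} on the remaining one) leaves
\begin{equation*}
N^{a}R^{1-s}\cdot\frac{R^{\frac14-}}{N^{\frac12-}}\;=\;N^{a-\frac12+}\,R^{\frac54-s-},
\end{equation*}
whose supremum over $R\les N$ is $\sim N^{a+\frac34-s+}$, unbounded whenever $s<\frac34+a$. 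The same failure occurs when $\xi_3$ is close to $\xi_1$ (both $\sim N$) with $\xi_0,\xi_2$ small; these configurations are exactly the paper's cases $|\xi_1|\leq|\xi_3|\approx|\xi_0|\gg|\xi_2|$ and $|\xi_1|\leq|\xi_3|\approx|\xi_2|\gg|\xi_0|$ (in its labeling), and they are estimated there only because the resonance factor is \emph{never} dropped: the partial gain $\la(\xi_0-\xi_1)(\xi_0-\xi_3)\ra^{\frac12-}\gtrsim (NR)^{\frac12-}$ coming from the one large separation is what closes the bound, even though the full $\xi_{max}^2$ resonance is unavailable.

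The repair is organizational but essential, and it is what the paper does: do not split into ``resonance $\sim\xi_{max}^{2}$'' versus ``no resonance.'' Extract the maximal modulation once and for all, which produces the factor $\la(\xi_0-\xi_1)(\xi_0-\xi_3)\ra^{\frac12-}$ as a \emph{product of the two separations, whatever their sizes}, reduce to the single supremum bound \eqref{temp43}, and run the case analysis with that factor present in every case. Your other flagged concern — that after removing the maximal modulation the surviving factors carry only $\la\tau+\xi^2\ra^{b}$ with $b<\frac12$, while \eqref{ks} and \eqref{mf} want $\frac12+$ — is indeed resolvable along the lines you suspected: either borrow the excess $M^{b-(\frac12-)}$ from the maximal modulation $M$ to upgrade the other three weights, or keep an $\la\tau+\xi^2\ra^{0+}$ factor on the maximal-modulation term and use \eqref{se2} together with \eqref{ks-}, \eqref{mf-}. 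So that part of your plan is sound; the unrecoverable defect as written is the discarded resonance in the intermediate-frequency configurations above.
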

\begin{proof}
Passing to the Fourier side and by duality as in the proof of Proposition~\ref{prop:smooth}, it suffices to 
prove
$$
 \int\limits_{{\xi_0 - \xi_1 + \xi_2-\xi_3=0  }\atop {\tau_0 - \tau_1+\tau_2-\tau_3 =0}}  \frac{\la\xi_0\ra^{s+a} \la \xi_2\ra g(\xi_0,\tau_0) \prod_{j=1}^3 f(\xi_j,\tau_j) }{ \prod_{j=1}^3 \la \xi_j\ra^s \prod_{j=0}^3\la\tau_j+\xi_j^2\ra^b }\les \|f\|_{L^2_{\xi,\tau}}^3\|g\|_{L^2_{\xi,\tau}}.
$$
Note that 
$$
 \prod_{j=0}^3  \la\tau_j+\xi_j^2\ra^{b} \gtrsim 
\la (\xi_0-\xi_1)(\xi_0-\xi_3)\ra^{\frac12-} \frac{ \prod_{j=0}^3  \la\tau_j+\xi_j^2\ra^{\frac12+}}{\max\limits_{0\leq j\leq 3 }  \la\tau_j+\xi_j^2\ra^{\frac12+}}.
$$
Using  \eqref{ks} and \eqref{mf} and observing that
$$
\min\Big(\frac{\la \xi_i\ra^{\frac14}\la \xi_j\ra^{\frac14}  }{ \la \xi_k\ra^{\frac12}}, \frac{\la \xi_i\ra^{\frac14}\la \xi_k\ra^{\frac14}  }{ \la \xi_j\ra^{\frac12}}, \frac{\la \xi_j\ra^{\frac14}\la \xi_k\ra^{\frac14}  }{ \la \xi_i\ra^{\frac12}}\Big)\les \frac{\la \xi_i\ra^{\frac14}\la \xi_j\ra^{\frac14} \la \xi_k\ra^{\frac14}}{\la \xi_i\ra^{\frac34}+\la \xi_j\ra^{\frac34}+ \la \xi_k\ra^{\frac34}},
$$ 
 it suffices to prove that 
$$
\sup\limits_{\xi_0 - \xi_1 + \xi_2-\xi_3=0 }\Big(\frac{\la\xi_0\ra^{s+a} \la \xi_1\ra^{-s}\la \xi_2\ra^{1-s}  \la \xi_3\ra^{-s}   }{ \la (\xi_0-\xi_1)(\xi_0-\xi_3)\ra^{\frac12-}   } \max\limits_{0\leq i,j,k\leq 3, \text{ distinct} }  \frac{\la \xi_i\ra^{\frac14}\la \xi_j\ra^{\frac14} \la \xi_k\ra^{\frac14}}{\la \xi_i\ra^{\frac34}+\la \xi_j\ra^{\frac34}+ \la \xi_k\ra^{\frac34}}\Big)\les 1.
$$
Renaming the variables $\xi_0,...,\xi_3$ according to their size as $|\xi_{min}|\leq|\xi_{mid}|\leq|\xi_{max1}|\approx |\xi_{max}| $, we have 
$$
\max\limits_{0\leq i,j,k\leq 3, \text{ distinct} }  \frac{\la \xi_i\ra^{\frac14}\la \xi_j\ra^{\frac14} \la \xi_k\ra^{\frac14}}{\la \xi_i\ra^{\frac34}+\la \xi_j\ra^{\frac34}+ \la \xi_k\ra^{\frac34}} \approx \frac{\la \xi_{mid}\ra^{\frac14}}{\la \xi_{max}\ra^\frac14}.
$$
Therefore, we need to bound 
$$
\sup\limits_{\xi_0 - \xi_1 + \xi_2-\xi_3=0 }\frac{\la\xi_0\ra^{s+a} \la \xi_1\ra^{-s}\la \xi_2\ra^{1-s}  \la \xi_3\ra^{-s}   }{ \la (\xi_0-\xi_1)(\xi_0-\xi_3)\ra^{\frac12-}   } \frac{\la \xi_{mid}\ra^{\frac14}}{\la \xi_{max}\ra^\frac14}.   
$$
The case  $|\xi_0-\xi_1|\les 1$ or   $|\xi_0-\xi_3|\les 1$ is immediate. Thus it suffices to prove  that
\be\label{temp43}
\frac{\la\xi_0\ra^{s+a} \la \xi_1\ra^{-s}\la \xi_2\ra^{1-s}  \la \xi_3\ra^{-s}   }{ \la  \xi_0-\xi_1 \ra^{\frac12-}  \la  \xi_0-\xi_3 \ra^{\frac12-}}  \frac{\la \xi_{mid}\ra^{\frac14}}{\la \xi_{max}\ra^\frac14}\les 1
\ee
when $\xi_0 - \xi_1 + \xi_2-\xi_3=0 $ and $ |\xi_1|\leq |\xi_3|$, by symmetry. 

In the case  $|\xi_1|\leq |\xi_3|\les |\xi_2|\approx |\xi_0|$, we have  
$$
\eqref{temp43}\les \frac{\la\xi_0\ra^{\frac34 +a}      }{  \la \xi_1\ra^{ s}\la  \xi_0-\xi_1 \ra^{\frac12-}  \la  \xi_0-\xi_3 \ra^{\frac12-}  \la \xi_3\ra^{ s-\frac14}}.  $$
For $s\geq \frac34$, we bound this by   $  \la\xi_0\ra^{a-\frac14+} \les 1$ provided that $a<\frac14$. For $\frac12<s<\frac34$, we have the bound  
 $$   \frac{\la\xi_0\ra^{\frac34 +a}      }{  \la \xi_1\ra^{ s-\frac12}\la  \xi_0 \ra^{\frac12-}  \la  \xi_0-\xi_3 \ra^{\frac34-s-}  \la \xi_0\ra^{ s-\frac14}}\les    \frac{\la\xi_0\ra^{\frac12 +a-s+}      }{  \la \xi_2\ra^{\min( s-\frac12,\frac34-s)-}  } \les \la\xi_0\ra^{\frac12 +a-s -\min( s-\frac12,\frac34-s)+}    \les 1  $$ 
provided that $a<\min(\frac14,2s-1)$. In the first inequality above we used $\xi_0-\xi_3=\xi_1-\xi_2$. 

In the case  $|\xi_1|\leq |\xi_3| \approx |\xi_0|$ and $|\xi_3|\gg|\xi_2|$, we have the bound
$$\eqref{temp43}\les  \frac{\la\xi_0\ra^{ a-\frac14} \la \xi_1\ra^{-s}\la \xi_2\ra^{1-s}    (\la \xi_1\ra^{\frac14} +\la \xi_2\ra^{\frac14})}{ \la  \xi_0  \ra^{\frac12-}  \la  \xi_1-\xi_2 \ra^{\frac12-}} \les    
 \la\xi_0\ra^{ a-\frac12+}  \la \xi_2\ra^{\frac12-s+}\les 1
$$
provided that $a<\frac12$.
  
In the case  $|\xi_1|\leq |\xi_3| \approx |\xi_2|$ and $|\xi_3|\gg|\xi_0|$, we have the bound
$$\eqref{temp43}\les  
\frac{\la\xi_0\ra^{s+a}  \la \xi_3\ra^{\frac14-2s+}   (\la \xi_0\ra^{\frac14} +\la \xi_1\ra^{\frac14}) }{ \la \xi_1\ra^{ s} \la  \xi_0-\xi_1\ra^{\frac12-}   } \les     \la\xi_0\ra^{s+a-\frac14+}   \la \xi_3\ra^{\frac14-2s+} \les 1
$$
provided that $a<s$.

In the case  $|\xi_0|,|\xi_2|\ll |\xi_1| \approx |\xi_3|$, we have the bound
$$\eqref{temp43}\les  
 \la\xi_0\ra^{s+a}  \la \xi_2\ra^{1-s}  \la \xi_3\ra^{-2 s-\frac54+} ( \la  \xi_0\ra^{\frac14}+\la  \xi_2\ra^{\frac14})\les  \la\xi_0\ra^{ a-1}  \la \xi_2\ra^{1-2s+}     \les 1 
$$ 
provided that $a\leq 1$. 
\end{proof}
 \begin{proposition}\label{prop:smooth4} For fixed $\frac12< s <\frac52$, and  $ a<\min(2s-1,\frac{1}{4}, \frac52-s)$,   there exists $\epsilon>0$ such that for $\frac12-\epsilon<b <\frac12$, we have
\begin{align*}  \big\| u^2\overline{u_x} \big\|_{X^{\frac12,\frac{2s+2a-1-4b}{4}}} \les \|u\|_{X^{s,b}}^3. 
\end{align*} 
\end{proposition}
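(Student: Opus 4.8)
The plan is to follow the scheme of Proposition~\ref{prop:smooth2}, but to replace its quintic multilinear machinery by the trilinear resonance analysis already used in Proposition~\ref{prop:smooth3}. Passing to the Fourier side and using duality exactly as there, it suffices to establish the trilinear bound
\[
I:=\int\limits_{{\xi_0 - \xi_1 + \xi_2-\xi_3=0  }\atop {\tau_0 - \tau_1+\tau_2-\tau_3 =0}}  \frac{\la\xi_0\ra^{\frac12}\,\la \tau_0+\xi_0^2\ra^{\frac{2s+2a-1-4b}{4}}\, \la \xi_2\ra\, g(\xi_0,\tau_0) \prod_{j=1}^3 f(\xi_j,\tau_j) }{ \prod_{j=1}^3 \la \xi_j\ra^s \la\tau_j+\xi_j^2\ra^b }\les \|f\|_{L^2_{\xi,\tau}}^3\|g\|_{L^2_{\xi,\tau}},
\]
where the derivative falls on the conjugated factor $\xi_2$. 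Write $\theta:=\frac{2s+2a-1-4b}{4}$ for the output time-weight exponent. The whole argument is driven by the resonance identity on the support of the convolution, namely $(\tau_0+\xi_0^2)-(\tau_1+\xi_1^2)+(\tau_2+\xi_2^2)-(\tau_3+\xi_3^2)=2(\xi_0-\xi_1)(\xi_0-\xi_3)$, which yields both $\max_{0\le j\le 3}\la\tau_j+\xi_j^2\ra\ges \la(\xi_0-\xi_1)(\xi_0-\xi_3)\ra$ and, in particular, $\la\tau_0+\xi_0^2\ra\les \la\xi_{max}\ra^2\max_{1\le j\le 3}\la\tau_j+\xi_j^2\ra$.

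I would then split into cases according to the size of $s+a$, exactly as in Proposition~\ref{prop:smooth2}, and note that the sign of $\theta$ tracks whether $s+a$ lies above or below $\frac32$. When $\frac12<s+a<\frac32$ one has $\theta\le 0$, so the weight $\la\tau_0+\xi_0^2\ra^{\theta}\le1$ can only help: in the region where an input modulation is largest I extract the gain $\la(\xi_0-\xi_1)(\xi_0-\xi_3)\ra^{\frac12-}$ from it at power $b$ and estimate the remaining factors by the Kato and maximal function inequalities \eqref{ks}--\eqref{mf} in their lossy forms \eqref{ks-}, \eqref{mf-}, precisely as in Proposition~\ref{prop:smooth3}; in the region where the output modulation is largest the negative power $\la\tau_0+\xi_0^2\ra^{\theta}\les\la(\xi_0-\xi_1)(\xi_0-\xi_3)\ra^{\theta}$ itself supplies the resonance gain (now of size $-\theta$), the three full input modulations being free for the dispersive estimates. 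After reducing to a spatial supremum of the type \eqref{temp43}, now carrying $\la\xi_0\ra^{\frac12}$ in place of $\la\xi_0\ra^{s+a}$ together with the derivative factor $\la\xi_2\ra^{1-s}$, the same case-by-case comparison of the sizes of $\xi_0,\dots,\xi_3$ closes the estimate under $a<\min(2s-1,\frac14)$.

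The remaining case $\frac32\le s+a<\frac52$, which forces $s>\frac54$ since $a<\frac14$, is where $\theta>0$ and the positive output time-weight must be absorbed. Here I would use $\la\tau_0+\xi_0^2\ra^{\theta}\les \la\xi_{max}\ra^{2\theta}\max_{1\le j\le 3}\la\tau_j+\xi_j^2\ra^{\theta}$ to trade the weight for the power $\la\xi_{max}\ra^{2\theta}$ of the largest frequency, observing that $\theta<b$ (equivalently $s+a<\frac52$) makes the leftover modulation factor $\la\cdot\ra^{\theta-b}\le1$ harmless. The surviving input modulations, together with the resonance gain $\la(\xi_0-\xi_1)(\xi_0-\xi_3)\ra^{\frac12-}$, are then fed into \eqref{ks}--\eqref{mf} as before, reducing matters to the finiteness of a spatial supremum in which the power of $\la\xi_{max}\ra$ has been raised by $2\theta\approx s+a-\frac52$.

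The main obstacle is precisely this last case. Unlike in Case~1 of Proposition~\ref{prop:smooth2}, where the crude modulation-maximum bound sufficed, here the spatial derivative $\la\xi_2\ra$ can fall on the largest frequency, so trading the time-weight for $\la\xi_{max}\ra^{2\theta}$ alone over-loads the top mode; one must use the bilinear resonance gain $\la(\xi_0-\xi_1)(\xi_0-\xi_3)\ra^{\frac12-}$ \emph{simultaneously} with the time-weight bound. Balancing the full derivative against this gain and the available weights $\la\xi_j\ra^{-s}$ is exactly what produces the constraint $a<\frac52-s$, and one must moreover run through all subconfigurations of where the maximal modulation and the largest frequency sit. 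This bookkeeping, rather than any single hard inequality, is where I expect the real work to lie.
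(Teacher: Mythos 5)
Your skeleton (duality, the resonance identity, the split at $s+a=\frac32$, Kato/maximal-function estimates, frequency case analysis) is the paper's, and your treatment of the sub-region where the \emph{output} modulation dominates in the regime $\frac12<s+a<\frac32$ is essentially correct. But both of your cases over-draw the modulation budget, and this is a genuine gap, not bookkeeping. In the dual form the weight on the $(\xi_0,\tau_0)$ variable is $\la\tau_0+\xi_0^2\ra^{-\frac{3-2s-2a}{4}+}$, \emph{not} $\la\tau_0+\xi_0^2\ra^{-b}$; this is precisely what distinguishes the present proposition from Proposition~\ref{prop:smooth3}, so no sub-case can be done ``precisely as in'' that proof. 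Concretely, when the dominant modulation is an input one, you propose to spend its full power $b$ to extract $\la(\xi_0-\xi_1)(\xi_0-\xi_3)\ra^{\frac12-}$. After that extraction the remaining weights are $\frac{3-2s-2a}{4}-$ on $g$ (below the $\frac12$ threshold required by any of \eqref{ks}, \eqref{mf}, \eqref{ks-}, \eqref{mf-}) and $\frac12-$ on the two other inputs, so \emph{two} of the four factors are forced into $L^2_{x,t}$; their $L^2_t$ norms already exhaust the H\"older budget in time, forcing the remaining two factors into $L^\infty_t$-type spaces, i.e.\ \eqref{mf-}, and then the $x$-exponents sum to $\frac12+\frac12+\frac14+\frac14>1$. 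No assignment closes. Since the total denominator power is $\frac{3-2s-2a}{4}+3b$, reserving $\frac32+$ for the three dispersive estimates leaves at most $\frac{3-2s-2a}{4}-$ of resonance gain in \emph{every} sub-case; this is exactly why the paper's multiplier bound \eqref{temp444} carries that exponent rather than the $\frac12-$ of \eqref{temp43}, and the frequency case analysis (and the constraints it produces) must be rerun at this weaker gain. Your reduction to ``a supremum of the type \eqref{temp43}'' is therefore based on an unavailable gain.

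The regime $\frac32\le s+a<\frac52$ is worse. First, $2\theta\approx s+a-\frac32$, not $s+a-\frac52$. Second, after your trade $\la\tau_0+\xi_0^2\ra^{\theta}\les\la\xi_{max}\ra^{2\theta}\max_{1\le j\le3}\la\tau_j+\xi_j^2\ra^{\theta}$, the surviving denominator powers are $b,\,b,\,b-\theta$, whose total is $\frac{9-2s-2a}{4}-<\frac32$; so three dispersive estimates are already impossible, and there is certainly no room to harvest a further factor $\la(\xi_0-\xi_1)(\xi_0-\xi_3)\ra^{\frac12-}$ ``simultaneously with the time-weight bound.'' (Note also that when the output modulation is the largest, all input modulations may be $\ll \la(\xi_0-\xi_1)(\xi_0-\xi_3)\ra$, so such a gain is not even extractable from them.) The paper closes this regime with \emph{no} resonance gain at all, by splitting according to the largest of $\la\xi_{max}\ra^2,\la\tau_1+\xi_1^2\ra,\la\tau_2+\xi_2^2\ra,\la\tau_3+\xi_3^2\ra$: if it is $\la\xi_{max}\ra^2$, the negative-exponent output weight is absorbed there at the cost $\la\xi_{max}\ra^{s+a-\frac32+}$ and all three input modulations feed \eqref{ks}--\eqref{mf} (this is where $a<\frac14$ is used); if it is an input modulation, the output weight is absorbed into that modulation, producing the gain $\la\xi_{max}\ra^{-(\frac52-s-a)+}$, and one closes by Cauchy--Schwarz using only the two remaining $\frac12+$ modulations together with Lemma~\ref{lem:sums} --- this sub-case, not a derivative-versus-resonance balance, is where the hypothesis $a<\frac52-s$ actually enters.
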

\begin{proof} As in Proposition~\ref{prop:smooth}, by duality and letting $b=\frac12-$, it suffices to prove that
$$
 \int\limits_{{\xi_0 - \xi_1 + \xi_2-\xi_3 =0  }\atop {\tau_0 - \tau_1+\tau_2-\tau_3 =0}}  \frac{\la\xi_0\ra^{\frac12} \la \xi_2\ra g(\xi_0,\tau_0) \prod_{j=1}^3 f(\xi_j,\tau_j) }{ \la \tau_0+\xi_0^2\ra^{\frac{3-2s-2a }{4}-}\prod_{j=1}^3 \la \xi_j\ra^s \la\tau_j+\xi_j^2\ra^{\frac12-} }\les \|f\|_{L^2_{\xi,\tau}}^3\|g\|_{L^2_{\xi,\tau}}.
$$ 
First consider the case $s+a<\frac32$. As in the proof of Proposition~\ref{prop:smooth3}, we have 
$$
\la \tau_0+\xi_0^2\ra^{\frac{3-2s-2a }{4}-}\prod_{j=1}^3  \la\tau_j+\xi_j^2\ra^{\frac12-} \gtrsim 
\la (\xi_0-\xi_1)(\xi_0-\xi_3)\ra^{\frac{3-2s-2a }{4}-} \frac{ \prod_{j=0}^3  \la\tau_j+\xi_j^2\ra^{\frac12+}}{\max\limits_{0\leq j\leq 3 }  \la\tau_j+\xi_j^2\ra^{\frac12+}}.
$$
Using  \eqref{ks},  \eqref{mf}, and defining $\xi_{mid}$ and $\xi_{max}$ as in the proof of Proposition~\ref{prop:smooth3}, it suffices to prove that 
$$
\sup\limits_{\xi_0 - \xi_1 + \xi_2-\xi_3 =0  }\frac{\la\xi_0\ra^{\frac12} \la \xi_1\ra^{-s}\la \xi_2\ra^{1-s}  \la \xi_3\ra^{-s}   }{ \la (\xi_0-\xi_1)(\xi_0-\xi_3)\ra^{\frac{3-2s-2a }{4}-} }  \frac{\la \xi_{mid}\ra^{\frac14}}{\la \xi_{max}\ra^\frac14}\les 1.   
$$
The case  $|\xi_0-\xi_1|\les 1$ or   $|\xi_0-\xi_3|\les 1$ is immediate. Thus it suffices to prove that
\be\label{temp444}
\frac{\la\xi_0\ra^{\frac12} \la \xi_1\ra^{-s}\la \xi_2\ra^{1-s}  \la \xi_3\ra^{-s}   }{ \la  \xi_0-\xi_1 \ra^{\frac{3-2s-2a }{4}-}  \la  \xi_0-\xi_3 \ra^{\frac{3-2s-2a }{4}-}}  \frac{\la \xi_{mid}\ra^{\frac14}}{\la \xi_{max}\ra^\frac14}\les 1
\ee
when $\xi_0 - \xi_1 + \xi_2-\xi_3 =0$ and  $|\xi_1|\leq |\xi_3|$, by symmetry. 

In the case  $|\xi_1|\leq |\xi_3|\les |\xi_2|\approx |\xi_0|$, we have  
$$
\eqref{temp444}\les \frac{\la\xi_0\ra^{\frac54 -s}  }{ \la  \xi_0  \ra^{\frac{3-2s-2a }{4}-} \la  \xi_0  \ra^{\min(s-\frac14,\frac{3-2s-2a }{4})-}  } \les 1 
$$
provided that $a<\min(\frac14,2s-1)$.

In the case  $|\xi_1|\leq |\xi_3| \approx |\xi_0|$ and $|\xi_3|\gg|\xi_2|$, we have  
\begin{multline*}
\eqref{temp444}\les \frac{\la\xi_0\ra^{\frac14-s} \la \xi_1\ra^{-s}\la \xi_2\ra^{1-s} \la |\xi_1|+|\xi_2|\ra^{\frac14}     }{ \la  \xi_0  \ra^{\frac{3-2s-2a }{4}-}  \la  \xi_1-\xi_2 \ra^{\frac{3-2s-2a }{4}-} } \\ \les \frac{\la\xi_0\ra^{-\frac14-\frac{s}2+\frac{a}2+} \la \xi_2\ra^{1-s}     }{  \la \xi_1\ra^{ s}  \la  \xi_1-\xi_2 \ra^{\frac{3-2s-2a }{4}-} }
\les \la\xi_0\ra^{-\frac14-\frac{s}2+\frac{a}2+} \la \xi_2\ra^{\frac14-\frac{s}2+\frac{a}2},     
\end{multline*}
which is bounded for $a<s$. 

In the case  $|\xi_1|\leq |\xi_3| \approx |\xi_2|$ and $|\xi_3|\gg|\xi_0|$, we have 
$$
\eqref{temp444}\les \frac{\la\xi_0\ra^{\frac12} \la \xi_1\ra^{-s}\la \xi_2\ra^{ -\frac{3s}{2}+\frac{ a }{2}+}    \la |\xi_{1}|+|\xi_0|\ra^{\frac14}    }{ \la  \xi_0-\xi_1 \ra^{\frac{3-2s-2a }{4}-}  } \les   \la\xi_0\ra^{-\frac14+\frac{s+a}2+}  \la \xi_2\ra^{ -\frac{3s}{2}+\frac{ a }{2}+\frac14+}    \les 1  
$$
provided that $a<s$.

In the case  $|\xi_0|,|\xi_2|\ll |\xi_1| \approx |\xi_3|$, we have 
$$
\eqref{temp444}\les \la\xi_0\ra^{\frac12}  \la \xi_2\ra^{1-s}  \la \xi_3\ra^{a- s-\frac74+}  \la |\xi_0|+|\xi_2|\ra^{\frac14} \les 1 
$$ 
provided that $a<s$.

When $\frac32\leq s+a < \frac52$, we always have $s>1$.  Renaming the variables $\xi_1,\xi_2,\xi_3$ according to their size as $|\xi_{min}|\leq|\xi_{mid}|\leq|\xi_{max }|  $, we have  
$$
 \la \tau_0+\xi_0^2\ra \les \max(\la \xi_{max}\ra^2,  \la \tau_1+\xi_1^2\ra,\la \tau_2+\xi_2^2\ra , \la \tau_3+\xi_3^2\ra).
$$
When the maximum is one of the $\la\tau_j+\xi_j^2\ra$, say $\la\tau_3+\xi_3^2\ra$, we have
\begin{multline*}
\la \tau_0+\xi_0^2\ra^{\frac{3-2s-2a }{4}-}\prod_{j=1}^3   \la\tau_j+\xi_j^2\ra^{\frac12-} 
\gtrsim \la\tau_1+\xi_1^2\ra^{\frac12-}\la\tau_2+\xi_2^2\ra^{\frac12-}\la\tau_3+\xi_3^2\ra^{\frac12+\frac{3-2s-2a }{4}-}\\
\gtrsim  \la\tau_1+\xi_1^2\ra^{\frac12+}\la\tau_2+\xi_2^2\ra^{\frac12+}\la\xi_{max}\ra^{\frac52-s-a-}.
\end{multline*}
Therefore, by the Cauchy-Schwarz inequality (as in the proof of Proposition~\ref{prop:smooth2}), it suffices to prove that
$$
\sup_{\xi_0}\int\limits_{ \xi_0 - \xi_1 + \xi_2-\xi_3 =0   }  \frac{\la\xi_0\ra \la \xi_2\ra^{2}   }{ \la\xi_{max}\ra^{5-2s-2a-}\prod_{j=1}^3 \la \xi_j\ra^{2s} }\les 1. 
$$
Using Lemma~\ref{lem:sums} and $|\xi_{max}|\gtrsim|\xi_0|$, we bound the integral by 
$$\int  \frac{\la\xi_0\ra^{2s+2a-4+}   d\xi_1d\xi_3  }{ \la \xi_1\ra^{2s}\la \xi_0-\xi_1-\xi_3\ra^{2s-2}  \la \xi_3\ra^{2s} }\les \la\xi_0\ra^{2s+2a-4+}  \la \xi_0\ra^{2-2s}  =  \la\xi_0\ra^{ 2a-2+} \les 1 $$
provided that $a<1$. 

When the maximum is $\la \xi_{max}\ra^2$, we have
$$
\la \tau_0+\xi_0^2\ra^{\frac{3-2s-2a }{4}-}\prod_{j=1}^3   \la\tau_j+\xi_j^2\ra^{\frac12-} 
\gtrsim \la  \xi_{max}\ra^{\frac{3-2s-2a }{2}-}\prod_{j=1}^3   \la\tau_j+\xi_j^2\ra^{\frac12+}.
$$
Therefore, using \eqref{ks} and  \eqref{mf} as in the proof of Proposition~\ref{prop:smooth3}, it suffices to prove that 
$$ \la\xi_0\ra^{\frac12} \la \xi_1\ra^{-s}\la \xi_2\ra^{1-s}  \la \xi_3\ra^{-s}   \la \xi_{max}\ra^{s+a-\frac{3}{2}+} \frac{\la\xi_{mid}\ra^{\frac14}}{\la\xi_{max}\ra^{\frac14}}\les 1. 
$$
We bound the multiplier by
$$\frac{ \la \xi_{max}\ra^{s+a-\frac{1}{4}+}\la\xi_{mid}\ra^{\frac14}}{\la \xi_1\ra^{ s}\la \xi_2\ra^{ s}  \la \xi_3\ra^{ s} }\les \frac{ \la \xi_{max}\ra^{s+a-\frac{1}{4}+}\la\xi_{mid}\ra^{\frac14}}{\la \xi_{max}\ra^{ s}\la \xi_{mid}\ra^{ s}  }\les 1 
$$
provided that $a<\frac14$.
\end{proof}

\section{Local theory: The proof of  Theorem~\ref{thm:local} and Theorem~\ref{thm:smooth}} \label{sec:prt3}
We start with the proof of Theorem~\ref{thm:local} for $\alpha=-1$, see \eqref{first}. 
Recalling \eqref{eq:duhamel}, we first prove that
\begin{equation}\label{eq:gamma}
\Gamma u(t):=\eta(t)W_0^t\big(g,h\big)+i \eta(t) \int_0^tW_\R(t- t^\prime)  F(u) \,d t^\prime - \eta(t) W_0^t\big(0, q  \big),
\end{equation}
has a fixed point in $X^{s,b}$. Here  $s\in(\frac12,\frac52)$, $s\neq  \frac32$,    $b<\frac12$ is sufficiently close to $\frac12$, and
$$
F(u)=\eta(t/T)\big( i u^2\overline{u}_x  +\frac12 |u|^4u\big)     \text{ and }  q(t)=\eta(t ) D_0\Big(\int_0^tW_\R(t- t^\prime)  F(u)\, d t^\prime \Big),
$$
and
$$
W_0^t\big(g,h\big)=W_\R (t) g_e +W_0^t\big(0, h-p  \big) ,\,\,\,p(t)=\eta(t) D_0\big(W_\R (t) g_e\big).
$$
By \eqref{eq:xs1}, we have
$$
\|\eta W_\R (t) g_e\|_{X^{s ,b}} \les \|g_e\|_{H^s}\les \|g\|_{H^s(\R^+)}.
$$
Combining \eqref{eq:xs2}, \eqref{eq:xs3}, Proposition~\ref{prop:smooth}, and Proposition~\ref{prop:smooth3}, we obtain
\begin{multline}\label{Fs+abound}
 \big\| \eta(t) \int_0^tW_\R(t- t^\prime)  F(u) \,d t^\prime \big\|_{X^{s+a,\frac12+}} \les \|F(u)\|_{X^{s+a,-\frac12+}} 
  \\ \les T^{\frac12-b-} (\|u^2\overline{u}_x\|_{X^{s+a,-b}} + \||u|^4u\|_{X^{s+a,-b}}) \les T^{\frac12-b-} (\|u\|_{X^{s,b}}^3+ \|u\|_{X^{s,b}}^5).
\end{multline}
Using Proposition~\ref{prop:wbh}, Lemma~\ref{lem:Hs0}  and  noting that the compatibility condition holds we arrive at
\begin{multline*} 
 \| \eta(t) W_0^t\big( 0, h-p  \big)(t)\|_{X^{s,b}}
\les \|(h-p )\chi_{(0,\infty)} \|_{H^{\frac{2s+1}{4}}_t(\R)} \\ \les \|h-p\|_{H^{\frac{2s+1}{4}}_t(\R^+)} 
\les \|h \|_{H^{\frac{2s+1}{4}}_t(\R^+)} +\|p\|_{H^{\frac{2s+1}{4}}_t(\R )}\les \|h \|_{H^{\frac{2s+1}{4}}_t(\R^+)}+\|g\|_{H^s(\R^+)}.
\end{multline*}
In the last inequality, we used Lemma~\ref{lem:kato}.
Finally,
$$
 \| \eta(t) W_0^t\big( 0,  q \big)(t)\|_{X^{s+a,b}}
\les \|q\chi_{(0,\infty)} \|_{H^{\frac{2(s+a)+1}{4}}_t(\R)}   \les  \|q\|_{H^{\frac{2(s+a)+1}{4}}_t(\R )}.
$$
By  Proposition~\ref{prop:duhamelkato}, \eqref{eq:xs3}, and  Propositions  \ref{prop:smooth}, \ref{prop:smooth2}, \ref{prop:smooth3}, \ref{prop:smooth4}, we have
\be\label{qs+abound}
\|q\|_{H^{\frac{2(s+a)+1}{4}}_t(\R )} \les  
\|F\|_{X^{\frac12,\frac{2(s+a)-3+}{4}}}+\|F\|_{X^{s+a,-\frac12+}}  
\les T^{ \frac12-b-}\big(\| u\|_{X^{s,b}}^3+ \| u\|_{X^{s,b}}^5\big).
\ee
Combining these estimates, we obtain
$$
\|\Gamma u\|_{X^{s,b}}\les \|g\|_{H^s(\R^+)}+ \|h \|_{H^{\frac{2s+1}{4}}_t(\R^+)} +  T^{ \frac12-b-}\big(\| u\|_{X^{s,b}}^3+ \| u\|_{X^{s,b}}^5\big).
$$
 
This yields the existence of a fixed point $u$ of $\Gamma$ in $X^{s,b}$.
Now we prove that $u\in C^0_t H^s_x([0,T)\times \R)$. Note that the first term in the definition \eqref{eq:gamma} of $\Gamma$ is continuous in $H^s$.  The continuity of the third term
follows from Lemma~\ref{lem:wbcont}  and \eqref{qs+abound}. For the second term it follows from the embedding  $X^{s,\frac12+}\subset C^0_tH^s_x$
 and \eqref{eq:xs2} together with Proposition~\ref{prop:smooth}. The fact that  $u\in C^0_x H^{\frac{2s+1}{4}}_t(\R\times [0,T])$ follows similarly from Lemma~\ref{lem:kato}, Proposition~\ref{prop:duhamelkato}, and Lemma~\ref{lem:wbcont}.

The continuous dependence on the initial and boundary data follows from the fixed point argument and the a priori estimates as in the previous paragraph. 
In order to do this observe that by Lemma~\ref{lem:ext} below,  given  $g_n\to g$ in $H^s(\R^+)$ and an $H^s$ extension $g_e$ of $g$, there are extensions $g_{n,e}$ of the $g_n$ so that 
$g_{n,e}\to g_e$ in $H^s(\R)$. 

It remains to prove the uniqueness part of the theorem. Before that, we prove Theorem~\ref{thm:smooth}.  Recall that 
$$
u-W_0^t(g,h)=- \eta(t) W_0^t\big( 0,  q \big)(t)+i\eta(t) \int_0^tW_\R(t- t^\prime)  F(u) dt^\prime.
$$
By \eqref{Fs+abound} and  the embedding  $X^{s+a,\frac12+}\subset C^0_tH^{s+a}_x$, the second summand is in $C^0_tH^{s+a}_x$. The first summand also belongs to $C^0_tH^{s+a}_x$ by Lemma~\ref{lem:wbcont} and  \eqref{qs+abound}.

\subsection{Uniqueness of solutions}\label{sec:unique} We now discuss the uniqueness of solutions of \eqref{first}.
The solution we constructed above is the unique fixed point of \eqref{eq:gamma}. However, it is not a priori clear if there are no other distributional solutions, or if different extensions of the initial data produce  the same solution on $\R^+$.   
We resolve this issue for $s\geq 2$ first.  Take two $H^s$ local solutions $u_1$, $u_2$. 
We have
\be \label{nls4}\left\{\begin{array}{l}
i(u_1-u_2)_t+(u_1-u_2)_{xx}+i u_1^2\overline{u_1}_x +\frac12|u_1|^4u_1-i u_2^2\overline{u_2}_x - \frac12|u_2|^4u_2 =0,\\
(u_1-u_2)(x,0)=0, \,\,\,\,(u_1-u_2)(0,t)=0,\,\,\,\,x\in\R^+, t\in \R^+.  
\end{array}\right.
\ee
Multiplying the equation by $\overline{u_1-u_2}$ and integrating, we obtain (in the local existence interval)
$$
\partial_t\|u_1-u_2\|_2^2\les \|u_1-u_2\|_2^2 \big(1+\|u_1\|_{H^{2}}^3+\|u_2\|_{H^{2}}^3\big)\les  \|u_1-u_2\|_2^2.
$$
This  implies uniqueness for $s\geq 2$. 
 
Implementing the smoothing   bound   in Theorem~\ref{thm:smooth} we now prove the uniqueness for $\frac74<s<2$; the argument can be iterated to obtain uniqueness for all $s\in(\frac12,2), s\neq \frac32$.   
Let $u, v$ be two $H^s(\R^+)$ solutions as in Definition~\ref{def:lwp} starting from data $g,h$. 
Take sequences $\{g_n\}\subset H^2(\R^+)$ converging to $g$ in $H^s(\R^+)$, and $\{h_n\}\subset H^{\frac54}(\R^+)$ converging to $h$ in $H^{\frac{2s+1}4}(\R^+)$. Let $u_n$ be the unique $H^2(\R^+)$
solution on $[0,T_n]$. By continuous dependence this implies that $u=v=\lim u_n$ provided that  the existence times, $T_n$, do  not shrink to zero. To see that $T_n=T_n(\|g\|_{H^s},\|h \|_{H^{\frac{2s+1}4}})$, we use the smoothing estimates noting that $u_n$ agrees with the solution we obtained. Write
\begin{multline*}
\|\Gamma(u_n)\|_{X^{2,b}}\les  \|f_n\|_{H^{2}}+\|h_n\|_{H^{\frac54}}+T_n^{0+} (\|u_n\|_{X^{s,b}}^5+1)\\
\les  \|f_n\|_{H^{2}}+\|h_n\|_{H^{\frac54}}+T_n^{0+}  ( \|f_n\|_{H^{s}}+\|h_n\|_{H^{\frac{2s+1}4}} +1)^5\\
\les  \|f_n\|_{H^{2}}+\|h_n\|_{H^{\frac54}}+T_n^{0+}  ( \|g \|_{H^{s}}+\|h \|_{H^{\frac{2s+1}4}} +1)^5.
\end{multline*}
Since $\|f_n\|_{H^{2}}\geq  \|f_n\|_{H^{s}}\gtrsim \|g\|_{H^s}$, and similarly for $h_n$, we can pick $T_n$ depending only on $\|g\|_{H^s}$ and $\|h \|_{H^{\frac{2s+1}4}}$.

We finish this section with a lemma that was used above which also implies that the solutions we constructed  are limits of $H^{\frac52-}$ solutions.
\begin{lemma}\label{lem:ext} Fix $ s\geq 0$ and $k\geq s$. 
Let $g\in H^s(\R^+)$, $f\in H^k(\R^+)$, and let $g_e$ be an $H^s$ extension of $g$ to $\R$. Then there is an $H^k$ extension $f_e$ of $f$ to $\R$ so that 
$$\|g_e-f_e\|_{H^s(\R)}\les \|g-f\|_{H^s(\R^+)}.
$$ 
\end{lemma}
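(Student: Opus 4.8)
My plan is to build $f_e$ from $g_e$ by correcting a single fixed $H^k$ extension of $f$, and the whole difficulty is to do this while keeping $f_e$ genuinely in $H^k(\R)$ and not merely in $H^s(\R)$. Fix once and for all a bounded linear extension operator $E$ that is simultaneously bounded $H^\sigma(\R^+)\to H^\sigma(\R)$ for $\sigma=s$ and $\sigma=k$ (for instance the Stein total extension operator), so that $Eu|_{\R^+}=u$ and $\|Eu\|_{H^\sigma(\R)}\les \|u\|_{H^\sigma(\R^+)}$. Since $k\geq s$ we have $f\in H^s(\R^+)$ as well, so $w:=g-f\in H^s(\R^+)$ is well defined and $Ew=Eg-Ef$ obeys $\|Ew\|_{H^s(\R)}\les\|w\|_{H^s(\R^+)}$.

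The naive guess $f_e=Ef$ is in $H^k$ but useless, because $g_e-Ef=(g_e-Eg)+Ew$ contains the fixed rough piece $r:=g_e-Eg$, whose $H^s$ norm is of order $\|g\|_{H^s(\R^+)}$ rather than $\|w\|$. Note that $r\in H^s(\R)$ and, being a difference of two extensions of $g$, vanishes on $\R^+$. The natural remedy $f_e=Ef+r$ restores the exact bound $\|g_e-f_e\|_{H^s}=\|Ew\|_{H^s}\les\|w\|_{H^s(\R^+)}$ but lands only in $H^s(\R)$ since $r$ is rough. The key step is therefore to regularize $r$ without destroying either its vanishing on $\R^+$ (which guarantees the extension property) or its $H^s$-closeness to itself. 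I would convolve $r$ with a one-sided mollifier: let $\phi_\epsilon(y)=\epsilon^{-1}\phi(y/\epsilon)$ with $\phi\in C_c^\infty$, $\int\phi=1$, and $\supp\phi\subset[-1,0]$, and set $r_\epsilon:=r*\phi_\epsilon$. Because $\supp\phi_\epsilon\subset[-\epsilon,0]$, for $x\geq 0$ every argument $x-y$ with $y\in[-\epsilon,0]$ is nonnegative, where $r$ vanishes; hence $r_\epsilon\equiv 0$ on $\R^+$. Moreover $\widehat{r_\epsilon}=\widehat r\,\widehat{\phi_\epsilon}$ with $\widehat{\phi_\epsilon}$ Schwartz, so $\la\xi\ra^{k}\widehat{\phi_\epsilon}(\xi)$ is bounded by a multiple of $\la\xi\ra^{s}$ and $r_\epsilon\in H^k(\R)$ (indeed $H^m$ for every $m$), while $\|r-r_\epsilon\|_{H^s(\R)}\to 0$ as $\epsilon\to0$.

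With these pieces in hand I would set $f_e:=Ef+r_\epsilon$, choosing $\epsilon$ small enough that $\|r-r_\epsilon\|_{H^s(\R)}\leq\|g-f\|_{H^s(\R^+)}$ (possible whenever $g\neq f$). Then $f_e|_{\R^+}=f+0=f$, and $f_e\in H^k(\R)$ since both $Ef$ and $r_\epsilon$ are. Finally,
$$
g_e-f_e=(g_e-Eg)+(Eg-Ef)-r_\epsilon=(r-r_\epsilon)+Ew,
$$
so $\|g_e-f_e\|_{H^s(\R)}\leq\|r-r_\epsilon\|_{H^s(\R)}+\|Ew\|_{H^s(\R)}\les\|g-f\|_{H^s(\R^+)}$, as claimed. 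The only genuine obstacle is the tension identified above—$f_e$ must be $H^k$-smooth yet remain within $O(\|g-f\|_{H^s})$ of the merely $H^s$ function $g_e$—and it is resolved precisely by the one-sided mollification, which upgrades the rough corrector $r$ to $H^k$ while preserving both its support in $\R^-$ and its proximity to $r$ in the $H^s$ norm. Verifying that the one-sided convolution really does preserve the vanishing on $\R^+$ and that it maps $H^s$ into $H^k$ are the two short computations I would carry out in detail.
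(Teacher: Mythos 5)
Your proof is correct and is essentially the paper's argument: both decompose the difference between $g_e$ and an $H^k$ extension of $f$ into a norm-controlled extension of $g-f$ plus a rough corrector vanishing on $\R^+$ (your $r=g_e-Eg$, the paper's $\psi=g_e-\widetilde f-h$), and then replace that corrector by an $H^k$ function still supported in $(-\infty,0]$ and $H^s$-close to it. The only differences are implementation details—you use a single Stein-type operator bounded simultaneously on $H^s$ and $H^k$ where the paper simply picks two separate extensions, and you smooth the corrector with a one-sided mollifier supported in $[-\epsilon,0]$ where the paper first translates by $\delta$ and then mollifies at scale $\delta/2$—and your explicit caveat that $g\neq f$ is needed applies equally to the paper's proof, which invokes its approximation claim with $\epsilon=\|g-f\|_{H^s(\R^+)}>0$.
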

\begin{proof}  Fix $\psi \in H^s(\R)$ supported in $(-\infty,0]$. We claim that for any $\epsilon>0$, there is a function $\phi\in H^k(\R)$ supported in $(-\infty,0)$ such that $\|\phi-\psi\|_{H^s(\R)}<\epsilon$. Indeed, observing that 
 $\tau_{-\delta}\psi(\cdot)=\psi(\cdot+\delta) \to \psi(\cdot) $ 
in $H^s(\R)$ as $\delta \to 0^+$,    the claim follows by taking a smooth approximate identity $k_n$ supported in $(-\frac\delta2,\frac\delta2)$ for sufficiently small $\delta$, and letting
$\phi= (\tau_{-\delta} \psi ) * k_n$ for sufficiently large $n$. 

To obtain the lemma from this claim, let $\widetilde f$ be an $H^k$ extension of $f$ to $\R$ and   $h$   an $H^s$ extension of $g-f$ to $\R$ with $\|h\|_{H^s(\R)}\les \|g-f\|_{H^s(\R^+)}$. Apply the claim to $\psi=g_e-\widetilde f-h$ with $\epsilon=\|g-f\|_{H^s(\R^+)}$. Letting $f_e=\widetilde f +\phi$ yields the claim.  
\end{proof}

\subsection{The proof of  Theorem~\ref{thm:local} for general ${\mathbb \alpha\in \R}$}\label{sec:genalpha}
In this section we obtain  the local wellposedness of derivative NLS  on $\R^+$:  
\be \label{dnls}\left\{\begin{array}{l}
 iq_t+q_{xx}- i(|q|^2q)_x  =0,\,\,\,\,x\in\R^+, t\in \R^+,\\
 q(x,0)=G(x), \,\,\,\,q(0,t)=H(t).  \end{array}\right.
\ee
The same argument applies to any other gauge $\mG_\alpha$.

Let 
$$
u(x,t)=e^{i\int_x^\infty  |q(y,t) |^2dy} q(x,t) = e^{i\int_x^\infty  |u(y,t) |^2dy} q(x,t),\,\,\,\,\,t,x\in\R^+.
$$
We note that if $u$ solves \eqref{first} with $g(x)=e^{i\int_x^\infty  |G(y) |^2dy} G(x)$, and $h(t)=e^{i\int_0^\infty  |u(y,t) |^2dy}H(t)$, then $q$ solves \eqref{dnls}. Note that this constitutes  a different  boundary value problem than \eqref{first}, since the boundary value depends on the value of the function in the interior of the domain. Hence, the essential part of this process is finding  $h$ of the form $e^{i\gamma(t)} H(t)$, so that the solution $u$ of \eqref{first}, with data $g$, $h$, satisfies 
$$
\int_0^\infty  |u(y,t) |^2dy= \gamma(t),\,\,\,\,t\in[0,T].
$$
The following lemmas establish this and  allow us to obtain the local wellposedness of \eqref{dnls}.
\begin{lemma}\label{dnlsgauge}
Given $G\in H^s(\R^+)$ and $H\in H^{\frac{2s+1}4}(\R^+)$, there is a unique real valued function $\gamma\in H^{\frac{2s+1}4}([0,T])$ such that  the solution $u$ of \eqref{first} with data $g(x)=e^{i\int_x^\infty  |G(y) |^2dy} G(x)$, and $h(t)=e^{i\gamma(t)}H(t)$, 
satisfies 
$$
\gamma(t)= \int_0^\infty  |u(y,t) |^2dy,\,\,\,t\in[0,T].
$$
Here $T=T(\|G\|_{H^s},\|H\|_{H^{\frac{2s+1}4}})$. Moreover, $\gamma$ depends on $G$ and $H$ continuously. Furthermore, $\gamma\in H^1([0,T])$ for $s\in(\frac12,\frac32)$ and $\gamma\in H^{\frac32}([0,T])$ for $s\in(\frac32,\frac52)$.
\end{lemma}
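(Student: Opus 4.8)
The plan is to set up a contraction mapping for $\gamma$ on a short time interval. Given a candidate $\gamma\in H^{\frac{2s+1}{4}}([0,T])$, I form the boundary datum $h_\gamma(t)=e^{i\gamma(t)}H(t)$; since $\frac{2s+1}{4}>\frac12$, the space $H^{\frac{2s+1}{4}}$ is a multiplication algebra, so $e^{i\gamma}$ is a bounded multiplier and $\|h_\gamma\|_{H^{\frac{2s+1}{4}}}$ is controlled by $\|H\|_{H^{\frac{2s+1}{4}}}$ uniformly over bounded sets of $\gamma$. Evaluating the desired self-consistency relation at $t=0$ forces $\gamma(0)=\int_0^\infty|G(y)|^2\,dy=\|g\|_{L^2(\R^+)}^2$ (the gauge being unitary), and with this value the compatibility condition $g(0)=h_\gamma(0)$ reduces to the original $G(0)=H(0)$. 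Thus Theorem~\ref{thm:local} in the case $\alpha=-1$ (equation \eqref{first}) produces a unique solution $u_\gamma$ on $[0,T]$ with $T$ depending only on $\|G\|_{H^s}$ and $\|H\|_{H^{\frac{2s+1}{4}}}$. I then define $\Phi(\gamma)(t)=\int_0^\infty|u_\gamma(y,t)|^2\,dy$ and look for a fixed point of $\Phi$ in the ball $\{\gamma:\gamma(0)=\|g\|_{L^2(\R^+)}^2,\ \|\gamma\|_{H^{\frac{2s+1}{4}}}\le R\}$.

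The engine behind every estimate is the boundary mass identity obtained by multiplying \eqref{first} by $\overline{u_\gamma}$, taking real parts, and integrating over $\R^+$: after integrating by parts the interior contributions cancel and only boundary terms survive, yielding $\partial_t\Phi(\gamma)(t)=2\,\Im\big(\overline{h_\gamma(t)}\,D_0\partial_x u_\gamma(t)\big)+\frac12|h_\gamma(t)|^4$, where $D_0\partial_x u_\gamma(t)=\partial_x u_\gamma(0,t)$. As $u_\gamma$ is only an $H^s$ solution with $s<2$, I first justify this for the $H^2$-approximations supplied by Lemma~\ref{lem:ext} and then pass to the limit by continuous dependence. The derivative trace $D_0\partial_x u_\gamma$ is controlled in $H^{\frac{2s-1}{4}}_t$ by the second part of Lemma~\ref{lem:kato}, the derivative trace bound of Proposition~\ref{prop:duhamelkato}, and the boundary-operator trace of Lemma~\ref{lem:wbcont}, all applied to \eqref{eq:duhamel}. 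Since $\frac{2s+1}{4}>\frac12$, the product $\overline{h_\gamma}\,D_0\partial_x u_\gamma$ lies in $H^{\frac{2s-1}{4}}_t$ and $|h_\gamma|^4\in H^{\frac{2s+1}{4}}_t$, so $\partial_t\Phi(\gamma)\in H^{\frac{2s-1}{4}}$ and hence $\Phi(\gamma)\in H^{\frac{2s+3}{4}}\hookrightarrow H^{\frac{2s+1}{4}}$ with norm bounded in terms of $R$. A factor $T^{0+}$ coming from the short interval, exactly as in \eqref{Fs+abound} and \eqref{qs+abound}, keeps $\Phi$ inside the ball for $T$ small.

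For the contraction I estimate $\Phi(\gamma_1)-\Phi(\gamma_2)$. The difference $h_{\gamma_1}-h_{\gamma_2}=(e^{i\gamma_1}-e^{i\gamma_2})H$ is bounded by $\|\gamma_1-\gamma_2\|_{H^{\frac{2s+1}{4}}}$ via the algebra property, and the Lipschitz dependence of the $\alpha=-1$ solution map on its boundary data (a byproduct of the fixed-point argument behind Theorem~\ref{thm:local}) gives $\|u_{\gamma_1}-u_{\gamma_2}\|_{X^{s,b}}\lesssim\|\gamma_1-\gamma_2\|_{H^{\frac{2s+1}{4}}}$. Differencing the two mass identities and re-running the trace and product estimates on $u_{\gamma_1}-u_{\gamma_2}$ (splitting terms such as $\overline{h_{\gamma_1}}D_0\partial_x u_{\gamma_1}-\overline{h_{\gamma_2}}D_0\partial_x u_{\gamma_2}$) yields $\|\Phi(\gamma_1)-\Phi(\gamma_2)\|_{H^{\frac{2s+1}{4}}}\lesssim T^{0+}\|\gamma_1-\gamma_2\|_{H^{\frac{2s+1}{4}}}$, so $\Phi$ is a contraction once $T$ is small. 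This produces the unique $\gamma$, and continuous dependence on $(G,H)$ follows from the uniformity of these estimates in the standard way.

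The improved regularity is then read off directly from the mass identity at the fixed point: $\gamma'=2\,\Im(\overline{h}\,D_0\partial_x u)+\frac12|h|^4\in H^{\frac{2s-1}{4}}$, whence $\gamma\in H^{\frac{2s+3}{4}}([0,T])$; since $\frac{2s+3}{4}\ge 1$ for $s>\frac12$ and $\frac{2s+3}{4}\ge\frac32$ for $s>\frac32$, this gives $\gamma\in H^1$ when $s\in(\frac12,\frac32)$ and $\gamma\in H^{\frac32}$ when $s\in(\frac32,\frac52)$. I expect the contraction estimate to be the main obstacle, as it requires tracking the nonlinear dependence of the $L^2$-mass on the boundary data while simultaneously extracting a positive power of $T$ through the trace smoothing bounds; the rigorous justification of the boundary mass identity at regularity $s<2$ is a secondary technical point, handled by the density of smooth solutions from Lemma~\ref{lem:ext} together with continuous dependence.
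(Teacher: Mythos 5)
Your proposal is correct and is essentially the paper's own argument: a contraction for the map $\gamma\mapsto\|u^\gamma\|_{L^2(\R^+)}^2$, driven by the boundary mass identity $\partial_t\|u^\gamma\|_{L^2(\R^+)}^2=2\Im\big(\overline{h}\,u^\gamma_x(0,t)\big)\pm\frac12|h|^4$ together with the derivative-trace bounds of Lemma~\ref{lem:kato}, Lemma~\ref{lem:wbcont}, and Proposition~\ref{prop:duhamelkato}, with the smallness factor $T^{0+}$ extracted by H\"older in time exactly as in the paper. The only cosmetic differences are that the paper runs the contraction in the higher-regularity ball ($H^1$ for $s\in(\frac12,\frac32)$, $H^{\frac32}$ for $s\in(\frac32,\frac52)$) rather than contracting in $H^{\frac{2s+1}{4}}$ and bootstrapping regularity at the fixed point, and it uses the trace in $H^{\epsilon}_t$ with $\epsilon<\frac{2s-1}{4}$ rather than at the endpoint; the sign of the quartic boundary term (your $+$ versus the paper's $-$) is immaterial to the estimates.
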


\begin{proof} Fix $G$ and $H $ as in the statement.   Given a real valued function $\gamma\in H^{\frac{2s+1}4}([0,T])$, denote the solution $u$ by $u^\gamma$. 
We will prove the theorem by applying a  fixed point argument to the map 
$$f(\gamma)= \|u^\gamma\|_{L^2(\R^+)}^2 $$
for sufficiently small $T$. 
Let 
$$K_{r,T}=\{\phi\in H^r([0,T]): \|\phi\|_{H^r([0,T])} \leq 1, \phi(0)=\|g\|_{L^2(\R^+)}^2 \}.$$
The following claim finishes the proof:

Claim. For $s \in (\frac12,\frac32)$, there exists $T>0$ as in the statement of the theorem so that   $f$ maps $K_{\frac{2s+1}4,T}$ to $K_{1,T}$, and it is a contraction on  $K_{1,T}$.  Similarly,
for $s \in (\frac32,\frac52)$, $f$ maps $K_{\frac{2s+1}4,T}$ to $K_{\frac32,T}$, and it  is a contraction on $K_{\frac32,T}$.
 
Proof of the Claim. Fix $s \in (\frac12,\frac32)$. By the local theory, 
$$\|f(\gamma)\|_{L^2([0,T])}\les T^{\frac12} C_{ \|G \|_{H^{s}} ,\|H\|_{H^{\frac{2s+1}{4}}}, \|\gamma\|_{H^{\frac{2s+1}{4}}}}.$$
Therefore it suffices to consider  $ \|\partial_t f(\gamma)\|_{L^2([0,T])} $.
We calculate
\be\label{ft}
\partial_t f(\gamma)= 2\Im \big(\overline{h(t)} u^\gamma_x(0,t)\big) -\frac12 |h(t)|^4.
\ee 
Recall that
$  u^\gamma$ solves the equation 
$$
u^\gamma(t)=W_0^t(g,h)+\eta(t) \int_0^tW_\R(t- t^\prime)  \eta(t^\prime /T) N(u^\gamma)\,d t^\prime - \eta(t) W_0^t (0,  q^\gamma ),
$$
where
\begin{align*}
 q^\gamma(t)=\eta(t ) D_0\Big(\int_0^tW_\R(t- t^\prime)  \eta(t^\prime /T) N(u^\gamma) \, d t^\prime \Big).
\end{align*}
Therefore 
$$
u^\gamma_x(0,t)= D_0\partial_x\Big( W_0^t(g,h)+\eta(t) \int_0^tW_\R(t- t^\prime)  \eta(t^\prime /T) N(u^\gamma)\,d t^\prime - \eta(t) W_0^t (0,  q^\gamma )\Big).
$$
 We bound the  $H^\epsilon$ norm (for $0<\epsilon<\frac{2s-1}{4}$) as follows
\begin{multline*}
\|u^\gamma_x(0,t)\|_{H^{\epsilon}}\les \| g \|_{H^{\frac12+2\epsilon}} + \|h\|_{H^{\frac12+\epsilon}}+ \|\eta(t  /T) N(u^\gamma)\|_{X^{\frac12+2\epsilon ,-\frac12+}} +\|q^\gamma\|_{H^{\frac12+\epsilon}}\\
\les \| g \|_{H^{s}} + \|h\|_{H^{\frac{2s+1}{4}}}+\|\eta(t  /T) N(u^\gamma)\|_{X^{s ,-\frac12+}}  \les C_{ \| g \|_{H^{s}} , \|h\|_{H^{\frac{2s+1}{4}}}}.
\end{multline*}
In the first inequality we used Lemma~\ref{lem:kato}, Lemma~\ref{lem:wbcont}, and Proposition~\ref{prop:duhamelkato}. The last inequality follows from the local theory. 

Using this bound in \eqref{ft}, we obtain
\begin{multline*}
\|\partial_t f(\gamma)\|_{L^2([0,T])} \les \|h\|_{L^{\infty-}([0,T])} \|u^\gamma_x(0,t)\|_{L^{2+}}+\|h^4\|_{L^2([0,T])}\\
\les T^{0+} \|H\|_{L^\infty} \|u^\gamma_x(0,t)\|_{H^{\epsilon}}+T^{\frac12}\|H\|_{L^\infty}^4 \les
T^{0+} C_{ \|G \|_{H^{s}} ,\|H\|_{H^{\frac{2s+1}{4}}}, \|\gamma\|_{H^{\frac{2s+1}{4}}}}.
\end{multline*}
In the last step we used 
$$
\|g\|_{H^s}\les \|G\|_{H^s},  \text{ and}
$$
$$
\|h\|_{H^{\frac{2s+1}4}([0,T])}\les \|e^{i\gamma}\|_{H^{\frac{2s+1}4}([0,T])} \|H\|_{H^{\frac{2s+1}4}([0,T])}\les \exp\big(\|\gamma \|_{H^{\frac{2s+1}4}([0,T])}\big)  \|H\|_{H^{\frac{2s+1}4}([0,T])}.
$$ 
This and similar bounds for the differences complete the proof for $s \in (\frac12,\frac32)$ by choosing $T$ small. 

For $s \in (\frac32,\frac52)$, it suffices to consider $ \|\partial_t f(\gamma)\|_{H^{\frac12}([0,T])} $. Interpolating with the $L^2$ bound above, it suffices to prove that  
$$ \|\partial_t f(\gamma)\|_{H^{\frac12+\epsilon}([0,T])} \leq
 C_{ \|G \|_{H^{s}} ,\|H\|_{H^{\frac{2s+1}{4}}}, \|\gamma\|_{H^{\frac{2s+1}{4}}}},$$
which follows from similar arguments using Lemma~\ref{lem:kato}, Lemma~\ref{lem:wbcont},  Proposition~\ref{prop:duhamelkato}, and the local theory.
\end{proof}
\begin{rmk}
Note that for $s\in(\frac12,\frac32)$, a slight variation of the proof above utilizing the fractional Leibniz  rule   and Sobolev embedding theorem yields  that $\gamma\in H^{1+\epsilon}$ for some $\epsilon=\epsilon(s)>0$.
\end{rmk}
\begin{lemma}\label{cont_gauge} Fix $s\in(\frac12,\frac52)$, $s\neq \frac32$. Let $u$ be an $H^s$ solution of \eqref{first}. Then for any $\alpha\in \R$
$$
e^{i\alpha \int_x^\infty|u(y,t)|^2dy} u(x,t)\in C^0_tH^s_x([0,T]\times \R )\cap C^0_xH^{\frac{2s+1}4}_t( \R \times [0,T]).
$$
\end{lemma}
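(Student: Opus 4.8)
The plan is to treat the two components of the solution space by different mechanisms: the spatial regularity of
$$
v(x,t):=e^{i\alpha\int_x^\infty|u(y,t)|^2\,dy}u(x,t)=\mG_{-\alpha}u(x,t)
$$
is governed by the mapping properties of the spatial gauge, while the temporal regularity is governed by a conservation-law structure. For the $C^0_tH^s_x$ statement I would argue pointwise in $t$: since $u\in C^0_tH^s_x$, the curve $t\mapsto u(\cdot,t)$ is continuous from $[0,T]$ into $H^s(\R)$, so its image is compact, hence bounded in $H^s$. By Lemma~\ref{lem:contgauge} the gauge $\mG_{-\alpha}$ is Lipschitz on bounded subsets of $H^s(\R)$, so composing the continuous curve with this locally Lipschitz map gives that $t\mapsto v(\cdot,t)$ is continuous into $H^s(\R)$, with $\sup_t\|v(\cdot,t)\|_{H^s}\les C(\sup_t\|u(\cdot,t)\|_{H^s})<\infty$. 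Hence $v\in C^0_tH^s_x$.

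The substantive point is the $C^0_xH^{\frac{2s+1}4}_t$ statement, where the key is the time regularity, for fixed $x$, of the phase $\Phi(x,t):=\int_x^\infty|u(y,t)|^2\,dy$. The naive formula $\partial_t\Phi=\int_x^\infty 2\Re(\bar u u_t)\,dy$ is useless, since $u_t$ costs two spatial derivatives. Instead I would use the continuity equation for \eqref{first}: writing $J:=2\Im(\bar u u_x)+\tfrac12|u|^4$, the equation yields $\partial_t|u|^2=-\partial_x J$, and integrating in $y$ from $x$ to $\infty$ (using that $u(\cdot,t)\in H^s$ decays, so $J(\cdot,t)\to0$) produces the trace identity
$$
\partial_t\Phi(x,t)=2\Im\big(\bar u\,u_x\big)(x,t)+\tfrac12|u(x,t)|^4 .
$$
This converts a spatial integral into a pointwise-in-$x$ quantity built from the traces $u(x,\cdot)$ and $u_x(x,\cdot)$, which is exactly what the $C^0_x$ framework supplies.

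With this identity the bookkeeping is routine. The local theory gives $u\in C^0_xH^{\frac{2s+1}4}_t$, while the $\partial_x$ parts of Lemma~\ref{lem:kato}, Lemma~\ref{lem:wbcont}, and Proposition~\ref{prop:duhamelkato} give $u_x\in C^0_xH^{\frac{2s-1}4}_t$. Since $s>\tfrac12$ forces $\frac{2s+1}4>\tfrac12$, the one-dimensional product estimate (one factor above $H^{1/2}_t$) gives $\bar u\,u_x\in C^0_xH^{\frac{2s-1}4}_t$ and $|u|^4\in C^0_xH^{\frac{2s+1}4}_t\subset C^0_xH^{\frac{2s-1}4}_t$, whence $\partial_t\Phi\in C^0_xH^{\frac{2s-1}4}_t$ and therefore $\Phi\in C^0_xH^{\frac{2s+3}4}_t$ (note $\Phi(x,\cdot)$ is bounded, hence in $L^2_t[0,T]$). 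As $\frac{2s+3}4>\tfrac12$ and $\Phi$ is real and bounded, a Moser-type composition estimate for $H^\sigma_t[0,T]$ with $\sigma>\tfrac12$ gives $e^{i\alpha\Phi}\in C^0_xH^{\frac{2s+3}4}_t$; multiplying by $u\in C^0_xH^{\frac{2s+1}4}_t$ (again both factors above $H^{1/2}_t$) yields $v\in C^0_xH^{\frac{2s+1}4}_t$. Continuity in $x$ propagates through each step, since products and Nemytskii operators on these spaces are continuous and $s\neq\tfrac32$ keeps all exponents away from the borderline $\tfrac12$.

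The main obstacle is the rigorous justification of the trace identity, since for $s<2$ neither $u_t$ nor $u_{xx}$ is classically defined and the continuity equation holds a priori only weakly. I would resolve this by approximation: by the discussion in Section~\ref{sec:unique} together with Lemma~\ref{lem:ext}, the solution $u$ is a limit, in the solution space, of smoother solutions for which $\partial_t|u|^2=-\partial_x J$ and the integration by parts hold classically. Passing to the limit, using the established convergence of these approximations in $C^0_xH^{\frac{2s+1}4}_t$ for $u$ and in $C^0_xH^{\frac{2s-1}4}_t$ for $u_x$, transfers the identity together with all of the above estimates to the rough solution, completing the proof.
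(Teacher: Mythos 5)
Your proof is correct and is essentially the paper's own argument: the $C^0_tH^s_x$ part is handled identically via Lemma~\ref{lem:contgauge}, and for the $C^0_xH^{\frac{2s+1}4}_t$ part your flux identity $\partial_t\int_x^\infty|u|^2\,dy=2\Im(\bar u\,u_x)+\frac12|u|^4$ together with the $\partial_x$-trace bounds of Lemma~\ref{lem:kato}, Lemma~\ref{lem:wbcont}, and Proposition~\ref{prop:duhamelkato} is exactly the content of \eqref{ft} and the surrounding estimates in the proof of Lemma~\ref{dnlsgauge}, which the paper's proof of this lemma simply cites before finishing, as you do, with a Taylor-expansion/algebra-property argument for the exponential and a difference estimate for continuity in $x$. (Incidentally, your sign $+\frac12|u|^4$ is the correct one for equation \eqref{first}; the minus sign in \eqref{ft} is a harmless typo, and your extra phase regularity $H^{\frac{2s+3}4}_t$ is consistent with the remark following Lemma~\ref{dnlsgauge}.)
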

\begin{proof} Since $u\in C^0_tH^s_x([0,T]\times \R )\cap C^0_xH^{\frac{2s+1}4}_t( \R \times [0,T])$,
the first inclusion follows from the Lipschitz continuity of the gauge transformation, see Lemma~\ref{lem:contgauge}
in the Appendix, also see \cite{ckstt}.  
For the second inclusion first note that
$$
\int_x^\infty|u(y,t)|^2dy\in H^{\frac{2s+1}4}_t 
$$
for $x=0$ by  Lemma~\ref{dnlsgauge}; the proof is identical for fixed $x\neq 0$. By Taylor expansion and 
the algebra property of Sobolev spaces, this implies that
$$
e^{i\alpha \int_x^\infty|u(y,t)|^2dy} u(x,t)\in H^{\frac{2s+1}4}_t  
$$
for all $x$. For the continuity in $x$ we consider the differences. By the algebra property of Sobolev spaces and considering the Taylor expansion of exponentials, it suffices to  note that 
\begin{multline*}
\Big\|\int_x^\infty|u(y,t)|^2dy -\int_{x^\prime}^\infty|u(y,t)|^2dy\Big\|_{H^{\frac{2s+1}4}_t}\les 
\int_x^{x^\prime} \big\||u(y,t)|^2\big\|_{H^{\frac{2s+1}4}_t}dy \\
\les
 \int_x^{x^\prime} \big\| u(y,t) \big\|_{H^{\frac{2s+1}4}_t}^2dy \to 0,
\end{multline*}
as $x^\prime\to x.$
\end{proof}

It is easy to see that using Lemma~\ref{dnlsgauge} and Lemma~\ref{cont_gauge} one can construct solutions of \eqref{dnls} in  $C^0_tH^s_x([0,T]\times \R )\cap C^0_xH^{\frac{2s+1}4}_t( \R \times [0,T])$
which depend continuously on the data. Note that the local existence time  depends on $T$ in Lemma~\ref{dnlsgauge}, which depends only on $\|G\|_{H^s}$, $\|H\|_{H^\frac{2s+1}4}$.

It remains to establish the uniqueness of the solutions, which follows from the  previous argument for $s\geq 2$; see the discussions around equation \eqref{nls4} in Section~\ref{sec:unique}. For $s<2$, the smooth approximation argument in Section~\ref{sec:unique} requires  that for smooth $G_n$, $H_n$ converging to $G$, $H$ in $H^s$, $ H^{\frac{2s+1}4}$ respectively,  we find $\gamma_n$ as in Lemma~\ref{dnlsgauge} on an interval $[0,T_n]$, with $T_n=T_n(\|G\|_{H^s},\|H\|_{H^\frac{2s+1}4})$. This follows from Lemma~\ref{dnlsgauge} and the remark following its proof, since for $s\in(\frac32,\frac52)$, say,  we can construct $\gamma_n\in H^{\frac32}$ on $[0,T_n]$, with 
$T_n$ depending only on $\|G_n\|_{H^s}$, $\|H_n\|_{H^{\frac{2s+1}4}}.$ 
 
\section{Global wellposedness in the energy space}\label{sec:global}
In this section we prove Theorem~\ref{thm:global}. We first consider the global wellposedness of \eqref{DNLS_Ga} for   $\alpha=-\frac12$:
\be\label{a-12}\left\{\begin{array}{l}
iu_t+u_{xx}-i|u|^2u_x=0,\,\,\,\,x\in\R^+, t\in \R^+,\\
u(x,0)=g(x), \,\,\,\,u(0,t)=h(t).  
\end{array}\right. 
\ee
Noting the identity
\be\label{enerid}
4\Im(|u|^2u_x\overline{u}_t)=\partial_t\Im(\overline{u}u_x|u|^2)-\partial_x\Im(\overline{u}u_t|u|^2),
\ee
one can easily prove that  the following energy functional is conserved for the problem on $\R$:
$$E_{-\frac12}(u)=\|u_x\|_{L^2(\R)}^2+\frac12\Im\int_{\R} u\overline{u}_x|u|^2.
$$
 
Substituting $u=\mG_{-\frac12} q$, we see that the energy for derivative NLS on $\R$ is
$$
E(q)=\|q_x\|_{L^2(\R)}^2+\frac32\Im\int_{\R}  q\overline{q}_x|q|^2+\frac12\|q\|_{L^6(\R)}^6.
$$
 In what follows,  with a slight abuse of notation, we use $E_{-\frac12}$ and  $E$ also to denote    the functionals where  $\R$ is  replaced by $\R^+$.   
  
To prove the global wellposedness of \eqref{a-12} in the energy space we need to find an a priori bound for the $H^1$ norm of the solution. To this end we use the following identities which can be obtained using \eqref{enerid} and justified by approximation by $H^2$ solutions of \eqref{a-12}:
\begin{align}
\label{eq:m} &\partial_t|u|^2= -2 \Im(u_x\overline{u})_x+\frac12(|u|^4)_x,\\
\label{eq:e}&\partial_t\big(|u_x|^2+\frac12 \Im(\overline{u}_xu |u|^2)\big)=2\Re(u_x\overline{u_t})_x-\frac12\Im(\overline{u}u_t|u|^2)_x,\\
\label{eq:l}&\partial_x(|u_x|^2)=-i\big[(u\overline{u_x})_t-(u\overline{u_t})_x\big].
\end{align}
Similar identities were used  in \cite{bonaetal,ETNLS} for the  NLS equation on the half line. 
Integrating these identities on $[0,t]\times[0,\infty)$, we obtain
\begin{align*}
\|u(t)\|_2^2-\|g\|_2^2&=2\Im\int_0^tu_x(0,s)\overline{h}(s)ds-\frac12\int_0^t|h(s)|^4ds,\\ 
E_{-\frac12}(u(t))-E_{-\frac12}(g)&=-2\Re\int_0^tu_x(0,s)\overline{h^\prime}(s)ds+\frac12\Im\int_0^t \overline{h(s)}h^\prime(s)|h(s)|^2ds,\\ 
I_t:=\int_0^t|u_x(0,s)|^2ds&=i\int_0^\infty u\overline{u}_x dx -i \int_0^\infty g \overline{g^\prime} dx +i \int_0^th(s)\overline{h^\prime}(s)ds.
\end{align*}
Also note that by the Gagliardo-Nirenberg inequality one can obtain
$$E_{-\frac12}(u)\geq  \|u_x\|_{L^2}^2(1-C\|u\|_{L^2}^2),
$$
for some absolute constant $C$. Therefore, in the case $\|u\|_{L^2}^2\leq \frac1{2C}$, we have
\begin{align*}
\|u \|_{L^2}^2&\leq c(1+\sqrt{I_t}), \\
\|u_x \|_{L^2}^2&\leq  c(1+\sqrt{I_t}). \\
I_t&\leq \|u\|_{L^2} \|u_x\|_{L^2} + c,
\end{align*}
where $c= c(C)\leq 1$ depends on $\|g\|_{H^1}+\|h\|_{H^1}$. 
Combining these inequalities, we obtain 
$$
I_t\leq   2c +c\sqrt{I_t}.
$$
We conclude that $I_t\leq 4c$, and hence $\|u \|_2^2\leq 2c, \|u_x \|_2^2\leq 2c$.
This implies that there exists an absolute constant $c>0$ so that \eqref{a-12} is globally wellposed in $H^1(\R^+)$ provided that $\|g\|_{H^1(\R^+)}+\|h\|_{H^1(\R^+)}\leq c$. This yields Theorem~\ref{thm:global} for $\alpha=-\frac12$.  

To obtain the global wellposedness of derivative NLS\footnote{For other values of $\alpha$ the proof is similar.}, plug $u=\mG_{-\frac12}q$ in  the identities \eqref{eq:m}--\eqref{eq:l}, and integrate on $[0,t]\times[0,\infty)$ to  obtain
 $$
\|q(t)\|_2^2-\|G\|_2^2=2\Im\int_0^t(\mG_{-\frac12}q)_x(0,s)\overline{(\mG_{-\frac12}q)}(0,s)ds-\frac12\int_0^t|H(s)|^4ds,
$$
\begin{multline*}
E(q(t))-E(G)=-2\Re\int_0^t (\mG_{-\frac12}q)_x(0,s)\overline{(\mG_{-\frac12}q)_s}(0,s)ds\\
+\frac12\Im\int_0^t \overline{(\mG_{-\frac12}q)}(0,s) (\mG_{-\frac12}q)_s(0,s) |H(s)|^2ds,
\end{multline*}
\begin{multline*}
I_t:=\int_0^t| (\mG_{-\frac12}q)_x(0,s)|^2ds=i\int_0^\infty  (\mG_{-\frac12}q)\overline{ (\mG_{-\frac12}q)}_x dx \\ -i \int_0^\infty \mG_{-\frac12}G \overline{(\mG_{-\frac12}G)^\prime} dx +i \int_0^t(\mG_{-\frac12}q)(0,s)\overline{(\mG_{-\frac12}q)_s}(0,s) ds.
\end{multline*}
In addition,  the definition of the gauge transformation and the $x$-integral of \eqref{eq:m} give 
$$
| (\mG_{-\frac12}q)_s (0,s)|\les |H^\prime(s)|+|H(s)|^5+|H(s)|^2| (\mG_{-\frac12}q)_x(0,s)|.
$$
Furthermore, by the boundedness of the gauge  transformation in $H^1$ and in $L^2$, and the Gagliardo-Nirenberg inequality we have the lower bound for the energy  as above. We thus obtain for small data
\begin{align*}
\|q\|_{L^2}^2&\leq c(1+\sqrt{I_t}),\\ 
\|q_x \|_{L^2}^2&\leq  c(1+\sqrt{I_t}+I_t),\\
I_t&\les c(1+\sqrt{I_t}+ I_t^{\frac34}).
\end{align*}
This concludes the proof of Theorem~\ref{thm:global} for $\alpha=0$.

\section{Derivative NLS on the real line}\label{sec:DNLSR}

In this section we prove Theorem~\ref{thm:smoothR} providing an improved smoothing estimate on the real line by applying  a normal form transform to the equation \eqref{DNLS-1}. 
Following the differentiation by parts method of \cite{bit} (also see \cite{egt} for an application on $\R$), we obtain
 \begin{proposition}\label{prop:normal} The solution $u$ of equation \eqref{DNLS-1} satisfies 
 \be\label{preduhamelR}
i\partial_t \left(e^{-it \Delta } u - e^{-it \Delta }  B (u)  \right)= - e^{-it \Delta } \big(R(u) +\frac12|u|^4u+ NR_1(u) +NR_2(u)  \big),
\ee
where
$$
\widehat{B (u)}(\xi)= \int\limits_{\xi-\xi_1+\xi_2-\xi_3=0 \atop{|\xi-\xi_1|, |\xi-\xi_3|\geq 1}} \frac{\xi_2 u_{\xi_1}\overline{u_{\xi_2}}u_{\xi_3}}{\xi^{2 }-\xi_1^{2 } +\xi_2^{2 }- \xi_3^{2 }},  
$$
$$
\widehat{R(u)}(\xi)=\int\limits_{\xi-\xi_1+\xi_2-\xi_3=0 \atop{|\xi-\xi_1|<1 \text{ or } |\xi-\xi_3|<1 }} \xi_2 u_{\xi_1}\overline{u_{\xi_2}}u_{\xi_3},
$$
$$
\widehat{NR_1(u)}(\xi)=2\int\limits_{\xi-\xi_1+\xi_2-\xi_3=0 \atop{|\xi-\xi_1| , |\xi -\xi_3|\geq 1 }} \frac{\xi_2 u_{\xi_1}\overline{u_{\xi_2}}w_{\xi_3} }{\xi^{2 }-\xi_1^{2 } +\xi_2^{2 }- \xi_3^{2 }}, 
$$
$$
\widehat{NR_2(u)}(\xi)=- \int\limits_{\xi-\xi_1+\xi_2-\xi_3=0 \atop{|\xi-\xi_1| , |\xi -\xi_3|\geq 1 }} \frac{\xi_2 u_{\xi_1}\overline{w_{\xi_2}} u_{\xi_3} }{\xi^{2 }-\xi_1^{2 } +\xi_2^{2 }- \xi_3^{2 }}.
$$
Here $u_\xi(t):=u(\widehat\xi,t)$ and  $w_\xi(t):=w(\widehat\xi,t)$ with 
$$
w=ie^{it\Delta}[\partial_t(e^{-it\Delta}u)]=-  iu^2\overline{u_x}-\frac12 |u|^4u.
$$
\end{proposition}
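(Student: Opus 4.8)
The plan is to pass to the interaction picture and then carry out a single differentiation by parts in time, following the scheme of \cite{bit}. First I would set $v=e^{-it\Delta}u$ and record the profile equation obtained by differentiating: since $e^{-it\Delta}$ has Fourier multiplier $e^{it\xi^2}$, a direct computation together with \eqref{DNLS-1} gives
\be
i\partial_t\big(e^{-it\Delta}u\big)=e^{-it\Delta}\big(iu_t+u_{xx}\big)=-e^{-it\Delta}\big(iu^2\overline{u_x}+\tfrac12|u|^4u\big).
\ee
On the spatial Fourier side, writing $a_\xi:=e^{it\xi^2}u_\xi$ for the profile, this says that $i\partial_t a_\xi$ equals the quintic contribution $-\tfrac12 e^{it\xi^2}\widehat{|u|^4u}(\xi)$ plus the cubic contribution, which after expanding the product as a convolution is a constant multiple of an integral over $\xi-\xi_1+\xi_2-\xi_3=0$ of $\xi_2\,e^{it\Omega}a_{\xi_1}\overline{a_{\xi_2}}a_{\xi_3}$, with resonance function $\Omega:=\xi^2-\xi_1^2+\xi_2^2-\xi_3^2$. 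The quintic term is simply carried along to the right hand side of \eqref{preduhamelR}.

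The next step is to split the cubic integral using the algebraic identity $\Omega=2(\xi-\xi_1)(\xi-\xi_3)$, valid on $\xi-\xi_1+\xi_2-\xi_3=0$. On the resonant region $\{|\xi-\xi_1|<1 \text{ or } |\xi-\xi_3|<1\}$ the oscillation is too weak to exploit, so I would leave this piece untouched; undoing the substitution $a_{\xi_j}\mapsto u_{\xi_j}$ reproduces exactly $-e^{-it\Delta}R(u)$. On the complementary region we have $|\Omega|\ge 2$, so $e^{it\Omega}=\frac{1}{i\Omega}\partial_t e^{it\Omega}$ is legitimate, and I would differentiate by parts in $t$. The boundary term, in which the derivative lands on the exponential, is precisely $i\partial_t\widehat{e^{-it\Delta}B(u)}(\xi)$ with $B(u)$ as in the statement; moving it to the left produces the combination $e^{-it\Delta}u-e^{-it\Delta}B(u)$. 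This also explains the cutoff $|\xi-\xi_1|,|\xi-\xi_3|\ge 1$ in the definition of $B(u)$, which keeps the denominator $\Omega$ bounded away from zero.

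Finally I would treat the remainder, in which $\partial_t$ falls on the triple product $a_{\xi_1}\overline{a_{\xi_2}}a_{\xi_3}$. The key is to reuse the equation: from the profile equation $\partial_t a_\xi=-ie^{it\xi^2}w_\xi$ with $w=-iu^2\overline{u_x}-\tfrac12|u|^4u$, each differentiated factor produces a copy of $w$ (or $\overline w$ from the conjugated slot), and recombining phases collapses every $e^{it\Omega}$ against the factor phases back to $e^{it\xi^2}$. The two terms where the derivative hits $a_{\xi_1}$ or $a_{\xi_3}$ coincide under the symmetry $\xi_1\leftrightarrow\xi_3$ (which preserves the constraint, the non-resonant region, $\xi_2$, and $\Omega$), giving the factor $2$ in $NR_1(u)$, while the derivative on $\overline{a_{\xi_2}}$ yields $NR_2(u)$ with its minus sign. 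The main technical point, more than an obstacle, is justifying the differentiation by parts and the convergence of the non-resonant integrals: I would first establish the identity for smooth solutions, where $|\Omega|^{-1}\le\frac12$ on the non-resonant region renders $B(u)$ absolutely convergent and all manipulations are legitimate, and then pass to the limit using the a priori bounds; the careful sign and phase bookkeeping is the only place where errors are likely to enter.
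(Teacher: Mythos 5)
Your proposal is correct and follows essentially the same route as the paper: the same interaction-picture (profile) formulation, the same resonant/non-resonant splitting via $\Omega=\xi^2-\xi_1^2+\xi_2^2-\xi_3^2$, the same differentiation by parts producing $B$ as the total-derivative term and $NR_1$, $NR_2$ from reinserting the equation (with the factor $2$ from the $\xi_1\leftrightarrow\xi_3$ symmetry and the sign flip from the conjugated slot), and the same justification by smooth approximation. The explicit factorization $\Omega=2(\xi-\xi_1)(\xi-\xi_3)$, which the paper leaves implicit, is a nice touch but not a different argument.
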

\begin{proof}
The following calculations can be justified  by smooth approximation. First observe that
$$
i \partial_t(e^{-it\Delta}u) =  e^{-it\Delta} \big(iu_t+u_{xx}\big)= -i  e^{-it\Delta} (u^2\overline{u_x}) -\frac12  e^{-it\Delta}(|u|^4u).
$$
On the Fourier side, we have
\begin{multline*}
-i \mF(e^{-it\Delta}(u^2\overline{u_x}))(\xi)=-\int\limits_{\xi-\xi_1+\xi_2-\xi_3=0} e^{it\xi^2} \xi_2 u_{\xi_1}\overline{u_{\xi_2}}u_{\xi_3}\\
=-e^{it\xi^2} \widehat{R(u)}(\xi)- \int\limits_{\xi-\xi_1+\xi_2-\xi_3=0 \atop{|\xi-\xi_1|, |\xi-\xi_3|\geq 1}}  e^{it\xi^2} \xi_2 u_{\xi_1}\overline{u_{\xi_2}}u_{\xi_3}. 
\end{multline*}
We rewrite the   integral above as
\begin{multline*}
-i\partial_t\big(e^{it\xi^2} \widehat{B(u)}(\xi)\big)+i \int\limits_{\xi-\xi_1+\xi_2-\xi_3=0 \atop{|\xi-\xi_1|, |\xi-\xi_3|\geq 1}}\frac{e^{it(\xi^2-\xi_1^2+\xi_2^2-\xi_3^2)} \xi_2  \partial_t\big(e^{it\xi_1^2}u_{\xi_1}\overline{e^{it\xi_2^2}u_{\xi_2}}e^{it\xi_3^2}u_{\xi_3}\big)}{\xi^{2 }-\xi_1^{2 } +\xi_2^{2 }- \xi_3^{2 }}\\
=-i\partial_t\big(e^{it\xi^2} \widehat{B(u)}(\xi)\big)+e^{it\xi^2}\widehat{NR_1(u)}(\xi) +e^{it\xi^2}\widehat{NR_2(u)}(\xi).
\end{multline*}
The equality follows from the definition of $w$ and the symmetry in $\xi_1$, $\xi_3$ variables.
\end{proof}
The following proposition estimates the terms that appear  in \eqref{preduhamelR}.  
\begin{proposition}\label{prop:smooth5} For fixed $s> \frac12 $, and  $ a<\min(2s-1,\frac12)$,   we have
\begin{align*}  
&\|B(u)\|_{H^{s+a}}\les \|u\|_{H^s}^3,\\
&\|R(u)\|_{X^{s+a,-\frac12+}}\les \|u\|_{H^s}^3,\\
& \big\| |u|^4u \big\|_{X^{ s+ a,-\frac12+ }} \les \|u\|_{X^{s,\frac12+}}^5, \\
& \big\| NR_1(u)+NR_2(u) \big\|_{X^{ s+ a,-\frac12+ }} \les \|u\|_{X^{s,\frac12+}}^2 \|w\|_{X^{s,-\frac38}},
\end{align*} 
where $w= -  iu^2\overline{u_x}-\frac12 |u|^4u$.
\end{proposition}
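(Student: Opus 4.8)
The common starting point is the resonance identity: on the hyperplane $\xi-\xi_1+\xi_2-\xi_3=0$ one has $\xi^2-\xi_1^2+\xi_2^2-\xi_3^2=2(\xi-\xi_1)(\xi-\xi_3)$, so the denominators defining $B$, $NR_1$, $NR_2$ are exactly $2(\xi-\xi_1)(\xi-\xi_3)$, and on their region of integration $|\xi-\xi_1|,|\xi-\xi_3|\geq 1$ they satisfy $|\xi^2-\xi_1^2+\xi_2^2-\xi_3^2|\approx\la\xi-\xi_1\ra\la\xi-\xi_3\ra$. For each of the four bounds the plan is to pass to the Fourier side, use duality against a unit vector $g$, and normalize the inputs by $f(\xi,\tau)=|\widehat u|\la\xi\ra^s\la\tau+\xi^2\ra^{\frac12+}$ (so $\|f\|_{L^2}=\|u\|_{X^{s,\frac12+}}$), replacing the $w$-factor by $W=\la\xi\ra^s\la\tau+\xi^2\ra^{-\frac38}\widehat w$ with $\|W\|_{L^2}=\|w\|_{X^{s,-\frac38}}$. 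This reduces everything to bounding a multilinear form with an explicit multiplier.

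For $B(u)$ the estimate is at fixed time, so no modulation is available, but this is compensated by the full two powers $\la\xi-\xi_1\ra\la\xi-\xi_3\ra$ in the denominator. The only configuration that could obstruct smoothing, $\xi\approx\xi_1$ large with $\xi_2\approx\xi_3$ small (one high input, derivative on a low frequency), forces $|\xi-\xi_1|=|\xi_2-\xi_3|<1$ and hence lies in the resonant set excluded from $B$; in the remaining configurations the spatial multiplier $\la\xi\ra^{s+a}\la\xi_2\ra\,\la\xi-\xi_1\ra^{-1}\la\xi-\xi_3\ra^{-1}\prod_{j=1}^3\la\xi_j\ra^{-s}$ is bounded for $a<\min(2s-1,\frac12)$, and I would close by Cauchy--Schwarz in the convolution variables. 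For $R(u)$, which carries precisely that excluded resonant interaction, the fixed-time bound genuinely fails, so I would instead work in $X^{s+a,-\frac12+}$ and recover the missing half-derivative from the Kato smoothing \eqref{ks} placed on the high-frequency factor, with the two remaining factors handled by the maximal function bound \eqref{mf}, exactly as in \eqref{ksmf-}. Splitting into $|\xi-\xi_1|<1$ and $|\xi-\xi_3|<1$, this is the argument of Proposition~\ref{prop:smooth3} run on the resonant region only; since the non-resonant high-derivative case that forced $a<\frac14$ there is now absent, the range improves to $a<\min(2s-1,\frac12)$.

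The quintic bound $\||u|^4u\|_{X^{s+a,-\frac12+}}\les\|u\|_{X^{s,\frac12+}}^5$ I would simply read off from Proposition~\ref{prop:smooth}, whose admissible range $a<\min(4s,\frac12)$ contains $a<\min(2s-1,\frac12)$ for $s>\frac12$. The main obstacle is the $NR_1+NR_2$ bound, because one input is $w$, which is controlled only in $X^{s,-\frac38}$ and therefore enters with a harmful factor $\la\tau_3+\xi_3^2\ra^{\frac38}$ in the numerator. The spatial part of the multiplier is identical to that of $B$, so the whole issue is to absorb this modulation penalty. I would case on the largest of the four modulations. When the output or a $u$-modulation is largest, its weight ($\la\tau_0+\xi_0^2\ra^{-\frac12+}$, respectively $\la\tau_j+\xi_j^2\ra^{-\frac12-}$) supplies the needed decay; when $\la\tau_3+\xi_3^2\ra$ is largest, the identity $\tau_3+\xi_3^2=(\tau_0+\xi_0^2)-(\tau_1+\xi_1^2)+(\tau_2+\xi_2^2)+2(\xi-\xi_1)(\xi-\xi_3)$ forces either $\la\tau_0+\xi_0^2\ra$ or the resonance $2(\xi-\xi_1)(\xi-\xi_3)$ to be $\gtrsim\la\tau_3+\xi_3^2\ra$, so that either the output weight or the denominator absorbs the penalty, leaving $\la\tau_3+\xi_3^2\ra^{\frac38-\frac12+}$ or $\la\tau_3+\xi_3^2\ra^{\frac38-1}$.

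The decisive numerology here is $\frac38<\frac12$: it is exactly what guarantees a surviving negative power after the transfer to the output modulation, and it is the reason $w$ is measured in $X^{s,-\frac38}$ rather than a weaker space. Once the penalty is absorbed, the remaining trilinear form has good (negative) modulation weights everywhere and carries the same spatial multiplier as $B(u)$, so it closes for $a<\min(2s-1,\frac12)$ by Cauchy--Schwarz together with \eqref{ks}--\eqref{mf}. I therefore expect the $NR_1+NR_2$ estimate, specifically the subcase in which the low-regularity factor $w$ realizes the dominant modulation, to be the one requiring the most care.
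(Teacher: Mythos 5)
Your overall skeleton (duality with normalized weights, the resonance identity $\xi_0^2-\xi_1^2+\xi_2^2-\xi_3^2=2(\xi_0-\xi_1)(\xi_0-\xi_3)$, quoting Proposition~\ref{prop:smooth} for the quintic term, Cauchy--Schwarz for $B$, and casing on the largest modulation for $NR_1+NR_2$) matches the paper, but two of your reductions have genuine gaps, and they occur exactly at the two places where the range $a<\min(2s-1,\tfrac12)$ has to be earned.

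For $R(u)$: your claim that the argument of Proposition~\ref{prop:smooth3} ``run on the resonant region only'' gives $a<\min(2s-1,\tfrac12)$ because ``the non-resonant high-derivative case that forced $a<\frac14$ is now absent'' is false. Test the configuration $\xi_0\approx\xi_1=N$, $\xi_2\approx\xi_3=O(1)$, $|\xi_0-\xi_1|\sim 1/N$: the resonance $(\xi_0-\xi_1)(\xi_0-\xi_3)$ is $O(1)$, so the modulations give nothing, and the multiplier is $\sim N^a$. The structure of Proposition~\ref{prop:smooth3} sacrifices the largest modulation weight to extract the resonance factor and must therefore put that function in $L^2_{x,t}$; if the largest modulation happens to sit on a \emph{low}-frequency factor, the \eqref{ks}/\eqref{mf} correction over the remaining three functions is only $\la\xi_{mid}\ra^{1/4}/\la\xi_{max}\ra^{1/4}=N^{-1/4}$, giving $N^{a-1/4}$ --- the same $a<\frac14$ barrier, now inside the resonant region. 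To reach $a<\frac12$ with your recipe you must use that on the resonant set there is nothing to extract, so all four modulation weights survive: the dual function can \emph{always} take the $L^2_{x,t}$ slot and Kato smoothing can always be placed on the unique high input frequency; you never articulate this, and your appeal to Proposition~\ref{prop:smooth3} points in the wrong direction. The paper's proof is different altogether: it sets $\rho=\xi_0-\xi_1$, proves the pointwise bound $\la\xi_1\ra^{a}\la\xi_2\ra^{1-2s}\la\rho(\xi_1-\xi_2)\ra^{-\frac12+}\les|\rho|^{-\frac12+}$, and then integrates this singularity over the unit interval $|\rho|<1$, i.e.\ it exploits the small measure of the resonant set rather than any \eqref{ks}/\eqref{mf} gain.

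For $NR_1+NR_2$, in the case you yourself single out as decisive ($\la\tau_3+\xi_3^2\ra$ dominant and controlled by the resonance), your bookkeeping ``the denominator absorbs the penalty, leaving $\la\tau_3+\xi_3^2\ra^{\frac38-1}$'' spends the \emph{full} power of $\la\xi_0-\xi_1\ra\la\xi_0-\xi_3\ra$. But then the surviving spatial multiplier is $\la\xi_0\ra^{s+a}\la\xi_2\ra^{1-s}\la\xi_1\ra^{-s}\la\xi_3\ra^{-s}$ with no resonance denominators at all; this is not ``the same spatial multiplier as $B(u)$'' and is in fact unbounded (take $\xi_0\approx\xi_2=N$, $\xi_1,\xi_3=O(1)$: it is $\sim N^{1+a}$), so the form cannot close --- your two assertions (full absorption, and reduction to $B$'s multiplier) are mutually inconsistent. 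The paper spends only the power $\frac38+$ of the resonance, keeping $\la\xi_0-\xi_1\ra^{\frac58-}\la\xi_0-\xi_3\ra^{\frac58-}$; since the $w$-factor has then lost its weight it is forced into the $L^2$ slot, \eqref{ks}/\eqref{mf} are applied to the other three functions, and a frequency case analysis different from $B$'s (see \eqref{temp49}) is carried out. That analysis is tight: in the configuration above it yields exactly $N^{a-\frac12+}$, and plain Cauchy--Schwarz with the $\frac58$ powers (i.e.\ a genuine reduction to $B$'s scheme) would cap at $a\leq\frac14$. So the subcase you flag as the crux is precisely the one for which your proposal contains no valid proof.
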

\begin{proof} First note that the bound for $|u|^4u$ follows from Proposition~\ref{prop:smooth}.
 
By writing $f(\xi)=\la\xi\ra^s |u_\xi|$, the following inequality implies the bound for $B$:  
$$
 \Big\|\int\limits_{\xi-\xi_1+\xi_2-\xi_3=0 } \frac{\la \xi\ra^{s+a}\la\xi_2 \ra^{1-s} f(\xi_1)f(\xi_2)f(\xi_3) }{\la \xi-\xi_1\ra \la \xi-\xi_3\ra \la\xi_1\ra^s  \la\xi_3\ra^s} \Big\|_{L^2_\xi}\les \|f\|_{L^2}^3.
$$
By the Cauchy-Schwarz inequality and symmetry, this boils down to showing 
$$
\sup_\xi \int\limits_{\xi-\xi_1+\xi_2-\xi_3=0\atop{|\xi_3|\leq |\xi_1|} } \frac{\la \xi\ra^{2s+2a}\la\xi_2 \ra^{2 -2s} }{\la \xi-\xi_1\ra^2 \la \xi-\xi_3\ra^2 \la\xi_1\ra^{2s}  \la\xi_3\ra^{2s}}<\infty.
$$ 
This supremum is finite by considering the cases $|\xi_1|\gtrsim |\xi_2|  \gg|\xi|$, $|\xi_2|\approx |\xi|$, 
$|\xi_1|\gtrsim |\xi|\gg|\xi_2|$.

By duality, renaming the functions, and symmetry,  the bound for $R$ follows from  
$$ 
\int\limits_{{\xi_0 - \xi_1 + \xi_2-\xi_3 =0,  |\xi_0 - \xi_1 |<1 }\atop {\tau_0 - \tau_1+\tau_2-\tau_3  =0}  } 
 \frac{\la\xi_0\ra^{s+a} \la \xi_2\ra^{1-s}  \prod_{j=0}^3 f(\xi_j,\tau_j) }{ \la \xi_1\ra^s \la \xi_3\ra^{ s}     \la\tau_0+\xi_0^2\ra^{\frac12-}\prod_{j=1}^3\la\tau_j+\xi_j^2\ra^{\frac12+}}\les \|f\|_{L^2_{\xi,\tau}}^4.
$$
Assume that $\la\tau_0+\xi_0^2\ra =\max\limits_{j=0,\dots,3} \la\tau_j+\xi_j^2\ra$, the other cases are similar. This  implies that 
$$
\la\tau_0+\xi_0^2\ra\gtrsim \la (\xi_0-\xi_1)(\xi_0-\xi_3)\ra =\la (\xi_0-\xi_1)(\xi_1-\xi_2)\ra .
$$ 
Using $\la\xi_0\ra\approx \la \xi_1\ra$, $\la\xi_3\ra\approx \la \xi_2\ra$, and letting $\rho=\xi_0-\xi_1$ in the $\xi_0$ integral, we bound the left hand side by
$$ 
\int\limits_{    |\rho |<1   } 
 \frac{\la\xi_1\ra^{ a} \la \xi_2\ra^{1-2s}   f(\rho+\xi_1,\tau_1-\tau_2+\tau_3)f( \xi_1,\tau_1)f( \xi_2,\tau_2) f( \rho+\xi_2,\tau_3)}{ \la \rho (\xi_1-\xi_2)\ra^{\frac12-}  \la\tau_1+\xi_1^2\ra^{\frac12+}\la\tau_2+\xi_2^2\ra^{\frac12+} \la\tau_3+(\rho+\xi_2)^2\ra^{\frac12+}}d\rho d\xi_1 d\xi_2 d\tau_1 d\tau_2d\tau_3.
$$
Noting that  $ \frac{\la\xi_1\ra^{ a} \la \xi_2\ra^{1-2s}  }{ \la \rho (\xi_1-\xi_2)\ra^{\frac12-} }\les \rho^{-\frac12+}$ for $a<\min(2s-1,\frac12)$ and $|\rho|<1$, and integrating in $\rho$, we obtain the bound
$$ 
\sup_\rho \int 
 \frac{    f(\rho+\xi_1,\tau_1-\tau_2+\tau_3)f( \xi_1,\tau_1)f( \xi_2,\tau_2) f( \rho+\xi_2,\tau_3)}{     \la\tau_2+\xi_2^2\ra^{\frac12+} \la\tau_3+(\rho+\xi_2)^2\ra^{\frac12+}} d\xi_1 d\xi_2 d\tau_1 d\tau_2d\tau_3.
$$
By the Cauchy-Schwarz inequality and Fubini's theorem,  the integral above is bounded by the square root of 
\begin{multline*}
\int 
 \frac{    f^2(\rho+\xi_1,\tau_1-\tau_2+\tau_3) f^2( \xi_2,\tau_2)  }{  \la\tau_3+(\rho+\xi_2)^2\ra^{1+}}d\xi_1 d\tau_1 d\tau_3 d\xi_2  d\tau_2 \times \\ \int 
 \frac{    f^2( \xi_1,\tau_1)  f^2( \rho+\xi_2,\tau_3)}{     \la\tau_2+\xi_2^2\ra^{1+} }d\xi_1 d\tau_1 d\tau_2 d\xi_2  d\tau_3\les \|f\|^{8}_{L^2_{\xi,\tau}}.
\end{multline*}
We now consider $NR_1$.  By duality and renaming the functions, it suffices to prove that
$$
 \int\limits_{\xi_0-\xi_1+\xi_2-\xi_3=0  \atop {\tau_0 - \tau_1+\tau_2-\tau_3  =0}} \frac{\la \xi_0\ra^{s+a} \la\xi_2\ra^{1-s}  \la\tau_3+\xi_3^2\ra^{\frac38 }\prod_{j=0}^3 f(\xi_j,\tau_j) }{ \la \xi_0-\xi_1\ra \la \xi_0-\xi_3\ra \la \xi_1\ra^s\la  \xi_3\ra^s \la\tau_0+\xi_0^2\ra^{\frac12- }\la\tau_1+\xi_1^2\ra^{\frac12+ } \la\tau_2+\xi_2^2\ra^{\frac12+ }}
 \les \|f\|^4_{L^2_{\xi,\tau}}.
$$
We will consider the following cases:
 $$\la\tau_3+\xi_3^2\ra\les \la \tau_0+\xi_0^2\ra \,\,\,\text{ and }\,\,\, \max\limits_{j=0,\dots,3} \la\tau_j+\xi_j^2\ra\les 
\la \xi_0-\xi_1\ra \la \xi_0-\xi_3\ra.$$  The remaining cases are similar.  In the former case, by the Cauchy-Schwarz inequality we bound the left hand side by the square root of 
\begin{multline*}
 \int\limits_{\xi_0-\xi_1+\xi_2-\xi_3=0  \atop {\tau_0 - \tau_1+\tau_2-\tau_3  =0}}\frac{ M^2f^2(\xi_0,\tau_0)}{ \la\tau_1+\xi_1^2\ra^{ 1 + } \la\tau_2+\xi_2^2\ra^{ 1 + }} \times  \int\limits_{\xi_0-\xi_1+\xi_2-\xi_3=0  \atop {\tau_0 - \tau_1+\tau_2-\tau_3  =0}}   \prod_{j=1}^3 f^2(\xi_j,\tau_j)  \\ \les \|f\|_{L^2_{\xi,\tau}}^8\sup_{\xi_0} \int\limits_{\xi_0-\xi_1+\xi_2-\xi_3=0  }  M^2,
\end{multline*}
where $M=\frac{\la \xi_0\ra^{s+a} \la\xi_2\ra^{1-s}   }{ \la \xi_0-\xi_1\ra \la \xi_0-\xi_3\ra \la \xi_1\ra^s\la  \xi_3\ra^s  }$. The supremum is finite as it was considered for the $B$ term above.

In the latter case when $\la \xi_0-\xi_1\ra \la \xi_0-\xi_3\ra  \gtrsim  \max\limits_{j=0,\dots,3} \la\tau_j+\xi_j^2\ra, $  we have
\begin{multline*}
\frac{ \la\tau_3+\xi_3^2\ra^{\frac38 }  }{ \la \xi_0-\xi_1\ra \la \xi_0-\xi_3\ra   \la\tau_0+\xi_0^2\ra^{\frac12- }\la\tau_1+\xi_1^2\ra^{\frac12+ } \la\tau_2+\xi_2^2\ra^{\frac12+ }} \\ \les \frac{ 1  }{ \la \xi_0-\xi_1\ra^{\frac58-} \la \xi_0-\xi_3\ra^{\frac58-}   \la\tau_0+\xi_0^2\ra^{\frac12+ }\la\tau_1+\xi_1^2\ra^{\frac12+ } \la\tau_2+\xi_2^2\ra^{\frac12+ }}. 
\end{multline*}
Using and \eqref{ks}, \eqref{mf}, and defining $\xi_{mid}$ and $\xi_{max}$ as in the proof of Proposition~\ref{prop:smooth3}, it suffices to prove that 
$$
\sup\limits_{\xi_0-\xi_1+\xi_2-\xi_3=0 }  \frac{\la \xi_0\ra^{s+a} \la\xi_2\ra^{1-s}   }{ \la \xi_0-\xi_1\ra^{\frac58-} \la \xi_0-\xi_3\ra^{\frac58-} \la \xi_1\ra^s\la  \xi_3\ra^s }\,\frac{ \la \xi_{mid} \ra^{\frac14}}{ \la \xi_{max}\ra^{\frac14}} 
 <\infty.
$$
This bound follows from the bound for  \eqref{temp43} above in all cases except $|\xi_1|\leq |\xi_3|\les |\xi_2|\approx |\xi_0|$.  In this case, we bound the supremum by
\be \label{temp49}
\sup\limits_{{\xi_0-\xi_1+\xi_2-\xi_3=0 }\atop{|\xi_2|\approx |\xi_0|}}  \frac{\la \xi_0\ra^{\frac34+a}   }{ \la \xi_0-\xi_1\ra^{\frac58-} \la \xi_0-\xi_3\ra^{\frac58-} \la \xi_1\ra^s\la  \xi_3\ra^{s-\frac14} }.
\ee
For $s\geq \frac78$, we bound \eqref{temp49}  by $\la\xi_0\ra^{a-\frac12+}$, which is bounded provided that $a<\frac12$. For $\frac58\leq s<\frac78$, we have the bound
$$
\eqref{temp49} \les \sup\limits_{{\xi_0-\xi_1+\xi_2-\xi_3=0 }\atop{|\xi_2|\approx |\xi_0|}}  \frac{\la \xi_0\ra^{\frac34+a}   }{ \la \xi_0 \ra^{\frac58-} \la \xi_0 \ra^{s-\frac14} \la \xi_1-\xi_2\ra^{\frac78-s-} \la \xi_1\ra^{s-\frac58}  } \les \sup_{\xi_0}\la \xi_0\ra^{\frac38+a-s-\min(\frac78-s,s-\frac58)+},
$$
which is finite provided that $a<\min(2s-1,\frac12)$. For $\frac12< s<\frac58$, we have the bound
$$
\eqref{temp49}\les  \sup_{\xi_0} \la \xi_0\ra^{ a-2s+1+}  <\infty  
$$
 provided that $a<2s-1.$ The proof for $NR_2$ is identical to the proof for $NR_1$. 
\end{proof}
The following lemma provides a bound for $\|w\|_{X^{s,-\frac38}}$ that arise in the estimate for the terms   $NR_1(u)$ and $NR_2(u)$ in Proposition~\ref{prop:smooth5}.
\begin{lemma}\label{lem:b38} For $s>\frac12$ we have
$$
\|w\|_{X^{s,-\frac38}}\les \|u\|_{X^{s,\frac12+}}^3+\|u\|_{X^{s,\frac12+}}^5.
$$
\end{lemma}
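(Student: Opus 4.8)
The plan is to write $w=-iu^2\overline{u_x}-\frac12|u|^4u$ and bound the quintic and cubic pieces separately in $X^{s,-\frac38}$ by the triangle inequality, matching the two summands in the asserted bound. The two pieces are of genuinely different difficulty: the quintic term carries no derivative and can be controlled by soft arguments, whereas the cubic term carries a derivative and requires the dispersive (resonance) smoothing encoded in the modulation weights.

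For the quintic term I would not exploit the $X^{s,b}$ structure at all. Since $-\frac38\le 0$, monotonicity in the modulation index gives $\||u|^4u\|_{X^{s,-\frac38}}\le\||u|^4u\|_{X^{s,0}}=\||u|^4u\|_{L^2_tH^s_x}$. As $s>\frac12$, $H^s(\R)$ is a multiplication algebra, so $\||u|^4u\|_{H^s_x}\les\|u\|_{H^s_x}^5$ pointwise in $t$; taking the $L^2_t$ norm and applying H\"older in time yields $\||u|^4u\|_{L^2_tH^s_x}\les\|u\|_{L^{10}_tH^s_x}^5$. The standard embedding $X^{s,b}\hookrightarrow L^{10}_tH^s_x$ for $b=\frac25$ (hence for $b=\frac12+$) then gives $\les\|u\|_{X^{s,\frac12+}}^5$, which is the quintic part of the claim.

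The cubic term $u^2\overline{u_x}$ is the main obstacle: the crude $L^2_tH^s_x$ bound is useless here since it would lose a full derivative. Instead I would run the duality/multiplier scheme of Proposition~\ref{prop:smooth3}. Writing $f=\la\xi\ra^s\la\tau+\xi^2\ra^{\frac12+}|\widehat u|$ and dualizing against $g\in L^2_{\xi,\tau}$, the estimate reduces to
$$
\int\limits_{{\xi_0-\xi_1+\xi_2-\xi_3=0}\atop{\tau_0-\tau_1+\tau_2-\tau_3=0}}\frac{\la\xi_0\ra^{s}\la\xi_2\ra\,g(\xi_0,\tau_0)\prod_{j=1}^3 f(\xi_j,\tau_j)}{\la\tau_0+\xi_0^2\ra^{\frac38}\prod_{j=1}^3\la\xi_j\ra^s\la\tau_j+\xi_j^2\ra^{\frac12+}}\les\|g\|_{L^2}\|f\|_{L^2}^3.
$$
The crucial input is the resonance identity: on the support of the constraints the alternating sum $(\tau_0+\xi_0^2)-(\tau_1+\xi_1^2)+(\tau_2+\xi_2^2)-(\tau_3+\xi_3^2)$ equals $2(\xi_0-\xi_1)(\xi_0-\xi_3)$, so $\max_{0\le j\le 3}\la\tau_j+\xi_j^2\ra\gtrsim\la(\xi_0-\xi_1)(\xi_0-\xi_3)\ra$; the resulting factor $\la(\xi_0-\xi_1)(\xi_0-\xi_3)\ra^{\frac38}$ is what must absorb the derivative $\la\xi_2\ra$.

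The essential difference from Proposition~\ref{prop:smooth3} is that the output modulation weight is only $\frac38<\frac12$, so one cannot uniformly borrow a factor $\la\cdot\ra^{\frac12+}$ from the maximal modulation as in that proof. I would therefore split into cases according to which of the four factors realizes the maximum $\la\tau_j+\xi_j^2\ra$. When the maximum sits on the output, its weight $\la\tau_0+\xi_0^2\ra^{\frac38}$ directly supplies $\la(\xi_0-\xi_1)(\xi_0-\xi_3)\ra^{\frac38}$, the factor $g$ is placed in $L^2_{x,t}$, and all three inputs retain their full $\frac12+$ for the Kato and maximal-function estimates \eqref{ks}, \eqref{mf}, with Kato assigned to the largest input frequency so as to supply the decisive gain $\la\xi_{\max}\ra^{-\frac12}$. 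When the maximum sits on an input one reserves $\frac38$ of that input's weight for the resonance and places the output (weight $\frac38$) together with the depleted input into the Sobolev/Strichartz spaces \eqref{se2}, \eqref{str-}, \eqref{str+}, leaving the remaining inputs for \eqref{ks}, \eqref{mf}. In every case the problem collapses to the boundedness of a purely spatial frequency multiplier, verified by a short analysis of the relative sizes of $\xi_0,\dots,\xi_3$ as in Proposition~\ref{prop:smooth3}; note $s>\frac12$ already controls the balanced regime $|\xi_0|\approx\cdots\approx|\xi_3|$ through the factor $\la\xi_0\ra^{1-2s}$. I expect the borderline high--high interaction $|\xi_2|\approx|\xi_0|\gg|\xi_1|,|\xi_3|$ to be the tightest, since there the derivative and the resonance both scale like $\la\xi_0\ra$ and only the correct assignment of Kato smoothing to the high frequency drives the net exponent negative (to $-\tfrac14$).
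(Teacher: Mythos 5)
Your quintic bound is fine, and for the cubic term your duality/resonance framework and your treatment of the case where the maximal modulation $\max_{0\le j\le3}\la\tau_j+\xi_j^2\ra$ falls on the \emph{output} factor coincide with the paper's scheme (that factor supplies $\la(\xi_0-\xi_1)(\xi_0-\xi_3)\ra^{3/8}$, $g$ goes into $L^2_{x,t}$, and Kato plus two maximal-function estimates handle the three inputs). The genuine gap is in the other case, where the maximum sits on an \emph{input}. Your recipe there --- reserve $\frac38$ from that input, put the depleted input (weight $\frac18+$) and the output (weight $\frac38$) into the diagonal spaces \eqref{se2}, \eqref{str-}, \eqref{str+}, and give \eqref{ks}, \eqref{mf} to the two remaining inputs --- fails for two separate reasons. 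First, the H\"older exponents cannot match: \eqref{ks} and \eqref{mf} contribute $L^\infty_xL^2_t$ and $L^4_xL^\infty_t$, so the output and the depleted input must jointly supply exponent sums $\frac34$ in $x$ but $\frac12$ in $t$, while any pair of diagonal $L^p_{x,t}$ spaces supplies equal sums in $x$ and $t$; no choice of $p$'s closes the pairing. Second, and more fatally, in the very configuration you flag as tightest, $|\xi_2|\approx|\xi_0|\gg|\xi_1|,|\xi_3|$, the maximal modulation may sit on the derivative-bearing factor $\xi_2$; then the two inputs left for \eqref{ks}, \eqref{mf} are the low-frequency ones, Kato yields no gain, and none of \eqref{se2}, \eqref{str-}, \eqref{str+} applied to the output produces a negative power of $\la\xi_0\ra$. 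Since $\la(\xi_0-\xi_1)(\xi_0-\xi_3)\ra^{3/8}\approx\la\xi_0\ra^{3/4}$ there, the surviving multiplier is of size $\la\xi_0\ra^{s}\la\xi_2\ra^{1-s}\la\xi_0\ra^{-3/4}\approx\la\xi_0\ra^{1/4}$, so your scheme cannot reach the exponent $-\frac14$ exactly where it is needed.

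The repair --- and this is what the paper's reduction ``as in Proposition~\ref{prop:smooth3}'' implicitly amounts to --- is to keep the standard assignment ($L^2_{x,t}$ on the maximal factor, \eqref{ks} plus two copies of \eqref{mf} on the other three) in this case as well, by transferring modulation weight from the maximal input to the output. If the input $j^*$ is maximal, then $\la\tau_{j^*}+\xi_{j^*}^2\ra\geq\la\tau_0+\xi_0^2\ra$, hence
\begin{align*}
\la\tau_0+\xi_0^2\ra^{\frac38}\la\tau_{j^*}+\xi_{j^*}^2\ra^{\frac12+}
&\gtrsim \la(\xi_0-\xi_1)(\xi_0-\xi_3)\ra^{\frac38}\,\la\tau_0+\xi_0^2\ra^{\frac38}\la\tau_{j^*}+\xi_{j^*}^2\ra^{\frac18+}\\
&\geq \la(\xi_0-\xi_1)(\xi_0-\xi_3)\ra^{\frac38}\,\la\tau_0+\xi_0^2\ra^{\frac12+},
\end{align*}
so the output inherits a full $\frac12+$ weight while the resonance factor is still extracted at power $\frac38$. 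Now the maximal input goes into $L^2_{x,t}$, and Kato can land on whichever of the three remaining factors carries the largest frequency --- in particular on the output in the bad configuration above, producing the gain $\la\xi_0\ra^{1/2}$ and the net exponent $1-\frac34-\frac12=-\frac14$. This recovers uniformly the paper's multiplier bound with the factor $\la\xi_{mid}\ra^{1/4}/\la\xi_{max}\ra^{1/4}$, after which the frequency case analysis that you defer to proceeds as in Proposition~\ref{prop:smooth3}.
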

\begin{proof} We will only provide the bound for the cubic part since the quintic part follows easily from Sobolev embedding. As in the proof of Proposition~\ref{prop:smooth3}, it suffices to prove that
$$
\sup\limits_{\xi_0 - \xi_1 + \xi_2-\xi_3=0 }\frac{\la\xi_0\ra^{s } \la \xi_1\ra^{-s}\la \xi_2\ra^{1-s}  \la \xi_3\ra^{-s}   }{ \la (\xi_0-\xi_1)(\xi_0-\xi_3)\ra^{\frac38}   } \frac{\la \xi_{mid}\ra^{\frac14}}{\la \xi_{max}\ra^\frac14}<\infty. 
$$
The case  $|\xi_0-\xi_1|\les 1$ or   $|\xi_0-\xi_3|\les 1$ is immediate. Thus it is enough to prove that
\be\label{temp45}
\frac{\la\xi_0\ra^{s } \la \xi_1\ra^{-s}\la \xi_2\ra^{1-s}  \la \xi_3\ra^{-s}   }{ \la  \xi_0-\xi_1 \ra^{\frac38}  \la  \xi_0-\xi_3 \ra^{\frac38}}  \frac{\la \xi_{mid}\ra^{\frac14}}{\la \xi_{max}\ra^\frac14}
\ee
is bounded when $\xi_0 - \xi_1 + \xi_2-\xi_3=0$ and  $|\xi_1|\leq |\xi_3|$, by symmetry. 

In the case  $|\xi_1|\leq |\xi_3|\les |\xi_2|\approx |\xi_0|$, for $s\geq \frac58$, we have  
$$
\eqref{temp45}\les \frac{\la\xi_0\ra^{\frac34  }      }{  \la \xi_1\ra^{ s}\la  \xi_0-\xi_1 \ra^{\frac38}  \la  \xi_0-\xi_3 \ra^{\frac38}  \la \xi_3\ra^{ s-\frac14}}  \les 1,$$
 whereas, for $\frac12<s<\frac58$, we find 
 $$ \eqref{temp45}\les \frac{\la\xi_0\ra^{\frac34 }      }{  \la \xi_1\ra^{ s-\frac38}\la  \xi_0 \ra^{\frac38}  \la  \xi_1-\xi_2 \ra^{\frac58-s }  \la \xi_0\ra^{ s-\frac14}}\les   \frac{\la\xi_0\ra^{\frac58-s }      }{  \la \xi_2\ra^{\frac58-s}  }\les 1.$$  
In the case  $|\xi_1|\leq |\xi_3| \approx |\xi_0|$ and $|\xi_3|\gg|\xi_2|$, we have 
$$\eqref{temp45}\les \frac{  \la \xi_1\ra^{-s}\la \xi_2\ra^{1-s}   }{ \la  \xi_3  \ra^{\frac38}  \la  \xi_1-\xi_2 \ra^{\frac38}} \les   
\frac{  \la \xi_2\ra^{1-s}    }{ \la  \xi_3  \ra^{\frac38}  \la   \xi_2 \ra^{\frac38}}\les 1.  
$$ 
In the case  $|\xi_1|\leq |\xi_3| \approx |\xi_2|$ and $|\xi_3|\gg|\xi_0|$, we have 
$$
\eqref{temp45}\les \frac{\la\xi_0\ra^{s } \la \xi_1\ra^{-s}  \la \xi_3\ra^{\frac38-2s }   (\la \xi_0\ra^{\frac14} +\la \xi_1\ra^{\frac14}) }{ \la  \xi_0-\xi_1\ra^{\frac38}   } \les  
 \la\xi_0\ra^{s-\frac38 }  \la \xi_3\ra^{\frac38-2s }   (\la \xi_0\ra^{\frac14} +\la \xi_1\ra^{\frac14})  
\les 1.
$$ 
In the case  $|\xi_0|,|\xi_2|\ll |\xi_1| \approx |\xi_3|$, we have 
$$
 \eqref{temp45}\les \la\xi_0\ra^{s }  \la \xi_2\ra^{1-s}  \la \xi_3\ra^{-2 s-1} ( \la  \xi_0\ra^{\frac14}+\la  \xi_2\ra^{\frac14})     \les 1.  
$$  
This completes the proof of the lemma.
\end{proof}
\begin{proof}[Proof of Theorem~\ref{thm:smoothR}]
 Integrating  \eqref{preduhamelR} on $[0,t]$ we obtain
$$
 u(t)-e^{it\Delta}g =B(u(t)) -e^{it\Delta}B(g) + i \int_0^te^{i(t-s) \Delta } \big(R(u) +\frac12|u|^4u+ NR_1(u) +NR_2(u)  \big)ds.
$$
The claim follows from this using  the bounds in Proposition~\ref{prop:smooth5}, the inequality \eqref{eq:xs2}, and the embedding $X^{s,b}\subset C^0_t H^{s}_x$ for $b>\frac{1}{2}$.
 \end{proof}

\section{Appendix}\label{sec:appendix} 
In this Appendix we discuss two lemmas. For a proof of the first one, see \cite{et3}. The second one is from \cite{ckstt} but we provide a proof for  completeness.
\begin{lemma}\label{lem:sums} If $\beta \geq \gamma \geq 0$ and $\beta + \gamma > 1$, then
\[ \int \frac{1}{\langle x-a_1 \rangle ^\beta \langle x - a_2 \rangle^\gamma} dx \lesssim \langle a_1-a_2 \rangle ^{-\gamma} \phi_\beta(a_1-a_2),  \]
where
\[ \phi_\beta(a) \sim \begin{cases} 1 & \beta > 1 \\  \log(1 + \langle a \rangle ) & \beta = 1 \\ \langle a \rangle ^{1-\beta} &\beta < 1 .\end{cases} \]
\end{lemma}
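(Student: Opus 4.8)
The plan is to prove the bound by a region decomposition of the line into a neighborhood of $a_1$, a neighborhood of $a_2$, and the complementary ``far'' region, estimating the integrand on each piece by the size of whichever bracket stays bounded below there. Writing $d=|a_1-a_2|$ and letting $m=\frac{a_1+a_2}2$ denote the midpoint, I would partition $\R$ (up to measure zero) as
$$\mathrm{I}=\{|x-a_1|\le \tfrac d2\},\qquad \mathrm{II}=\{|x-a_2|\le \tfrac d2\},\qquad \mathrm{III}=\{|x-m|>d\}.$$
The first two intervals abut at $m$ and their union is exactly $\{|x-m|\le d\}$, so $\mathrm{III}$ is precisely the complement and the three pieces cover $\R$. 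Note $\langle a_1-a_2\rangle=\langle d\rangle$.

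On region $\mathrm{I}$ one has $|x-a_2|\ge |a_1-a_2|-|x-a_1|\ge \frac d2$, hence $\langle x-a_2\rangle\gtrsim\langle a_1-a_2\rangle$; factoring this out leaves $\langle a_1-a_2\rangle^{-\gamma}\int_{|x-a_1|\le d/2}\langle x-a_1\rangle^{-\beta}\,dx$, and the remaining scalar integral is $\les\phi_\beta(a_1-a_2)$ by the elementary evaluation of $\int_{|y|\le R}\langle y\rangle^{-\beta}\,dy$ in the three ranges $\beta>1$, $\beta=1$, $\beta<1$. Region $\mathrm{II}$ is symmetric: there $\langle x-a_1\rangle\gtrsim\langle a_1-a_2\rangle$, which yields the bound $\langle a_1-a_2\rangle^{-\beta}\phi_\gamma(a_1-a_2)$. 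I would then convert this into the claimed form using the elementary inequality $\langle d\rangle^{-\beta}\phi_\gamma(d)\les\langle d\rangle^{-\gamma}\phi_\beta(d)$ — this is exactly the step where the hypothesis $\beta\ge\gamma$ is used. On region $\mathrm{III}$ both $|x-a_1|$ and $|x-a_2|$ are comparable to $|x-m|>d$, so the integrand is $\les\langle x-m\rangle^{-\beta-\gamma}$; since $\beta+\gamma>1$ the tail integral converges and is $\les\langle a_1-a_2\rangle^{1-\beta-\gamma}=\langle a_1-a_2\rangle^{-\gamma}\langle a_1-a_2\rangle^{1-\beta}$, and $\langle d\rangle^{1-\beta}\les\phi_\beta(d)$ in each range of $\beta$. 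This is where the hypothesis $\beta+\gamma>1$ enters. Summing the three contributions gives the lemma.

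There is no serious obstacle here; the only care required is the bookkeeping in the elementary scalar estimates, and in particular the small-$d$ regime, where $\langle d\rangle\sim 1$ and every occurrence of $\phi$ reduces to a constant, so all three bounds degenerate to the trivial estimate $\int_{|y|\le R}\langle y\rangle^{-\beta}\,dy\le 2R\les 1$. I would dispose of the two comparison inequalities $\langle d\rangle^{-\beta}\phi_\gamma(d)\les\langle d\rangle^{-\gamma}\phi_\beta(d)$ and $\langle d\rangle^{1-\beta}\les\phi_\beta(d)$ once and for all by splitting into the finitely many cases determined by the positions of $\beta$ and $\gamma$ relative to $1$, each of which is a one-line verification.
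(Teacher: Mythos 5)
Your proof is correct. Note that the paper does not actually prove this lemma --- it defers to the reference \cite{et3} --- so there is no in-paper argument to compare against; your three-region decomposition (half-neighborhoods of $a_1$ and of $a_2$, which abut at the midpoint, plus the far region $|x-m|>d$) is the standard way this estimate is established, and the three ingredients you isolate all check out: the elementary evaluation of $\int_{|y|\le R}\langle y\rangle^{-\beta}\,dy\lesssim\phi_\beta(R)$, the comparison $\langle d\rangle^{-\beta}\phi_\gamma(d)\lesssim\langle d\rangle^{-\gamma}\phi_\beta(d)$ (which indeed is exactly where $\beta\ge\gamma$ is needed, and holds in every case including $\gamma=0$), and the tail bound $\int_{|y|>d}\langle y\rangle^{-\beta-\gamma}\,dy\lesssim\langle d\rangle^{1-\beta-\gamma}$ together with $\langle d\rangle^{1-\beta}\lesssim\phi_\beta(d)$, which is where $\beta+\gamma>1$ enters.
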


\begin{lemma}\label{lem:contgauge}
For $s>\frac12$, the gauge $\mG_\alpha f(x)= f(x)\exp\big( -i\alpha\int_x^\infty |f(y)|^2dy\big)$ is Lipschitz continuous on bounded subsets of $H^s$. 
\end{lemma}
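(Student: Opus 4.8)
The plan is to reduce the statement to the single quantitative estimate
\[
\|\mG_\alpha f-\mG_\alpha g\|_{H^s}\les_M \|f-g\|_{H^s},
\]
valid whenever $\|f\|_{H^s},\|g\|_{H^s}\le M$, with the implicit constant allowed to depend on $M$, $s$, and $\alpha$. Writing $\phi_f(x)=\int_x^\infty|f(y)|^2\,dy$ so that $\mG_\alpha f=fe^{-i\alpha\phi_f}$, I would split
\[
\mG_\alpha f-\mG_\alpha g=(f-g)\,e^{-i\alpha\phi_f}+g\,e^{-i\alpha\phi_g}\big(e^{-i\alpha(\phi_f-\phi_g)}-1\big)
\]
and treat the two summands separately. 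The conceptual obstacle, which is exactly what makes the statement nontrivial, is that the unimodular phase $e^{-i\alpha\phi_f}$ is \emph{not} an element of $H^s$: since $\phi_f$ tends to the distinct constants $\|f\|_{L^2}^2$ and $0$ as $x\to-\infty$ and $x\to+\infty$, the phase has nonzero limits at $\pm\infty$ and is not even in $L^2$. Hence one cannot simply invoke the Banach algebra property of $H^s$ (valid for $s>\tfrac12$); the phase must instead be treated as a \emph{multiplier} on $H^s$, exploiting that its derivative $\partial_x e^{-i\alpha\phi_f}=i\alpha|f|^2e^{-i\alpha\phi_f}$ does lie in $H^{s-1}$ because $|f|^2\in H^s$.

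Accordingly, the first key step is the multiplier bound: for $m=e^{-i\alpha\phi_f}$ with $\|f\|_{H^s}\le M$ one has $\|mu\|_{H^\sigma}\les_M\|u\|_{H^\sigma}$ for every $0\le\sigma\le s$. For $0<\sigma<1$ I would prove this directly from the Gagliardo seminorm: since $|m|=1$ the $L^2$ norm is preserved, and in
\[
[mu]_{\dot H^\sigma}\le \|m\|_\infty[u]_{\dot H^\sigma}+\Big(\iint\frac{|m(x)-m(y)|^2|u(y)|^2}{|x-y|^{1+2\sigma}}\,dx\,dy\Big)^{1/2}
\]
the second integral is controlled by splitting into $|x-y|\le1$, where $|m(x)-m(y)|\le|\alpha|\,\|f\|_\infty^2|x-y|$ makes $\int|x-y|^{1-2\sigma}\,dx$ converge (using $\sigma<1$), and $|x-y|>1$, where $|m(x)-m(y)|\le2$; since $\|f\|_\infty\les\|f\|_{H^s}\le M$ this yields the claim. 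The values $\sigma\ge1$ follow by induction on the integer part of $\sigma$: differentiating $mu$ produces $m\,u'$ and $i\alpha|f|^2\,mu$, to which I apply the multiplier bound at the lower level $\sigma-1$ together with the algebra property and the product estimate $\||f|^2 u\|_{H^{\sigma-1}}\les\||f|^2\|_{H^s}\|u\|_{H^{\sigma-1}}$.

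With the multiplier bound in hand the first summand is immediate, namely $\|(f-g)e^{-i\alpha\phi_f}\|_{H^s}\les_M\|f-g\|_{H^s}$. For the second I would first strip off the multiplier $e^{-i\alpha\phi_g}$ and reduce to estimating $\|g\,(e^{-i\alpha\psi}-1)\|_{H^s}$, where $\psi=\phi_f-\phi_g=\int_x^\infty(|f|^2-|g|^2)$. Here $g\in H^s$ genuinely decays while $\Theta:=e^{-i\alpha\psi}-1$ is merely bounded, so I would apply the fractional Leibniz (Kato--Ponce) inequality
\[
\|g\Theta\|_{H^s}\les \|g\|_{H^s}\|\Theta\|_\infty+\|g\|_\infty\,\|\Theta\|_{\dot H^s}.
\]
The factor $\|\Theta\|_\infty\le|\alpha|\,\|\psi\|_\infty\les_M\|f-g\|_{L^2}$, and, using $\|\Theta\|_{\dot H^s}=\||D|^{s-1}\Theta'\|_{L^2}$ with $\Theta'=i\alpha(|f|^2-|g|^2)e^{-i\alpha\psi}$ together with the multiplier bound and the algebra identity $|f|^2-|g|^2=(f-g)\bar f+g(\overline{f-g})$, one gets $\|\Theta\|_{\dot H^s}\les_M\|f-g\|_{H^s}$. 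Collecting the two summands gives the desired Lipschitz estimate, and the only delicate point throughout remains the treatment of the non-decaying phase as an $H^s$-multiplier.
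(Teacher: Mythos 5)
Your overall strategy is sound and, where complete, correct: like the paper, you refuse to treat the unimodular phase as an element of $H^s$ and instead prove it acts as a bounded multiplier, and your Gagliardo-seminorm proof of that multiplier bound (with induction on the integer part of $\sigma$) is a legitimate, self-contained alternative. The paper gets the same multiplier bound differently: it proves $\|fg\|_{H^s}\les \|f\|_{H^s}\big(\|g\|_{L^\infty}+\|g'\|_{L^\infty}\big)$ for $s\in[0,1]$ by interpolating the trivial $L^2$ and $H^1$ cases, then inducts on $s$ by differentiating. The routes also diverge on the difference term: the paper's estimate applies to an \emph{arbitrary} Lipschitz multiplier, so the second summand is handled by taking the multiplier to be $e^{-i\alpha\phi_f}-e^{-i\alpha\phi_g}$, whose $L^\infty$ norm and whose derivative's $L^\infty$ norm are both $\les_M\|f-g\|_{H^s}$; this avoids homogeneous norms and Kato--Ponce entirely. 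You instead factor out $e^{-i\alpha\phi_g}$ and invoke the fractional Leibniz rule with the homogeneous seminorm $\|\Theta\|_{\dot H^s}$, $\Theta=e^{-i\alpha\psi}-1$, and that is where the one genuine gap sits.

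Concretely, for $\frac12<s<1$ you must control $\||D|^{s-1}\Theta'\|_{L^2}=\big(\int|\xi|^{2s-2}|\widehat{\Theta'}(\xi)|^2\,d\xi\big)^{1/2}$ with $s-1<0$, so the weight $|\xi|^{2s-2}$ blows up at $\xi=0$. The tools you cite --- the multiplier bound (a statement about inhomogeneous $H^\sigma$ norms with $\sigma\ge 0$) and the algebra identity for $|f|^2-|g|^2$ --- control $\|\Theta'\|_{H^\sigma}$ for $\sigma\ge0$, but no such bound controls the low-frequency singularity; one needs pointwise information on $\widehat{\Theta'}$ near $\xi=0$. The fix uses a different ingredient: since the phase is unimodular, $\|\Theta'\|_{L^1}\le|\alpha|\,\big\||f|^2-|g|^2\big\|_{L^1}\les_M\|f-g\|_{L^2}$, hence $\|\widehat{\Theta'}\|_{L^\infty}\les_M\|f-g\|_{L^2}$, and $\int_{|\xi|<1}|\xi|^{2s-2}\,d\xi<\infty$ precisely because $s>\frac12$; the region $|\xi|\ge1$ is controlled by $\|\Theta'\|_{L^2}$. (For $s\ge1$ your argument goes through as written, since then $\||D|^{s-1}\Theta'\|_{L^2}\le\|\Theta'\|_{H^{s-1}}$.) Alternatively, you can bypass both Kato--Ponce and the homogeneous norm altogether by treating $\Theta$ itself as the multiplier and $g$ as the $H^s$ function: $\Theta$ is bounded with $\|\Theta\|_{L^\infty}\les_M\|f-g\|_{L^2}$ and Lipschitz with $\|\Theta'\|_{L^\infty}\le|\alpha|\,\big\||f|^2-|g|^2\big\|_{L^\infty}\les_M\|f-g\|_{H^s}$, so your own Gagliardo argument (or the paper's interpolation bound) applied to the pair $(g,\Theta)$ gives $\|g\Theta\|_{H^s}\les_M\|f-g\|_{H^s}$ at once; this is in effect the paper's route, and it explains why the paper never needs $\dot H^s$ norms of non-decaying functions.
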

\begin{proof} Let $\|g\|_{Lip}:=\|g\|_{L^\infty}+\|g^\prime\|_{L^\infty}$.
Sobolev embedding gives
$$\Big\|\exp\big( -i\alpha\int_x^\infty |f(y)|^2dy\big)\Big\|_{Lip} \les 1+ \|f\|_{H^s}^2,\,\,\,s>\frac12.$$
In addition, by interpolation between $L^2$ and $H^1$, we have
$$
\|fg\|_{H^s}\les \|f\|_{H^s}\|g\|_{Lip},\,\,\,s\in [0,1],
$$
which completes the proof for $s\in(\frac12,1]$. 

For $s\in(1,2]$, let $g=\exp\big( -i\alpha\int_x^\infty |f(y)|^2dy\big)$, and note that
\begin{multline*}
\|fg\|_{H^s}\les \|fg\|_{L^2}+\|f^\prime g\|_{H^{s-1}}+\|f^3 g \|_{H^{s-1}} \\ \les 
\|g\|_{Lip}\big(\|f\|_{L^2}+\|f^\prime  \|_{H^{s-1}}+\|f^3  \|_{H^{s-1}} \big) \les 1+ \|f\|_{H^s}^5.
\end{multline*}
For $s>2$ the same argument works inductively. 
\end{proof}

\end{document}